\DeclareMathOperator*{\colim}{colim}
\DeclareMathOperator*{\coprodmo}{\coprod}
\newtheorem{theorem}{Theorem}[section]
\newtheorem{lemma}[theorem]{Lemma}
\newtheorem{corollary}[theorem]{Corollary}
\newtheorem{proposition}[theorem]{Proposition}
\newtheorem{question}[theorem]{Question}
\newtheorem{conjecture}[theorem]{Conjecture}
\newtheorem{definition}[theorem]{Definition}
\newtheorem{remark}[theorem]{Remark}
\begin{document}

\newcommand{\coker}{\mathrm{coker}}

\newcommand{\bbA}{\mathbb{A}}
\newcommand{\bbC}{\mathbb{C}}
\newcommand{\C}{\mathbb{C}}
\newcommand{\N}{\mathbb{N}}
\newcommand{\bbN}{\mathbb{N}}
\newcommand{\bbR}{\mathbb{R}}
\newcommand{\bbRP}{\mathbb{RP}}
\newcommand{\bbZ}{\mathbb{Z}}
\newcommand{\mcZ}{\mathcal{Z}}
\newcommand{\bbF}{\mathbb{F}}
\newcommand{\bbQ}{\mathbb{Q}}

\newcommand{\A}{\mathcal{A}}
\newcommand{\B}{\mathcal{B}}
\newcommand{\mcC}{\mathcal{C}}
\newcommand{\mcD}{\mathcal{D}}
\newcommand{\E}{\mathcal{E}}
\newcommand{\mF}{\mathcal{F}}
\newcommand{\G}{\mathcal{G}}
\newcommand{\mcG}{\mathcal{G}}
\newcommand{\mH}{\mathcal{H}}
\newcommand{\mcH}{\mathcal{H}} %Holonomy map
\newcommand{\I}{\mathcal{I}}
\newcommand{\mL}{\mathcal{L}}
\newcommand{\mcN}{\mathcal{N}}
\newcommand{\M}{\mathcal{M}}
\newcommand{\mO}{\mathcal{O}}
\newcommand{\mcP}{\mathcal{P}}
\newcommand{\mcR}{\mathcal{R}}
\newcommand{\T}{\mathcal{T}}
\newcommand{\U}{\mathcal{U}}
\newcommand{\V}{\mathcal{V}}
\newcommand{\Z}{\mathcal{Z}}

\newcommand{\bS}{\mathbf{S}}
\newcommand{\bd}{\mathbf{d}}

\newcommand{\bG}{\mathcal{G}_0} %Based gauge group
\newcommand{\sth}[1]{#1^{\mathrm{th}}}
\newcommand{\abs}[1]{\left| #1\right|}
\newcommand{\ord}[1]{\Delta \left( #1 \right)}
\newcommand{\leqs}{\leqslant}
\newcommand{\geqs}{\geqslant}
\newcommand{\heq}{\simeq}
\newcommand{\iso}{\simeq}
\newcommand{\maps}{\longrightarrow}
\newcommand{\lmaps}{\longleftarrow}
\newcommand{\injects}{\hookrightarrow}
\newcommand{\homeo}{\cong}
\newcommand{\surjects}{\twoheadrightarrow}
\newcommand{\longsurjects}{\twoheadlongrightarrow}
\newcommand{\isom}{\cong}
\newcommand{\cross}{\times}
\newcommand{\normal}{\vartriangleleft}
\newcommand{\wt}[1]{\widetilde{#1}} %wide tilde for M's
\newcommand{\fc}{\mathcal{A}_{\mathrm{flat}}} %Space of flat connections
\newcommand{\flc}{\mathcal{A}_{\mathrm{fl}}}

\newcommand{\Rdef}{R^{\mathrm{def}}}
\newcommand{\Sym}{\mathrm{Sym}}
\newcommand{\vect}[1]{\stackrel{\rightharpoonup}{\mathbf #1}}
\newcommand{\SR}{\mathcal{SR}}
\newcommand{\SRe}{\mathcal{SR}^{\mathrm{even}}}
\newcommand{\Rep}{\mathrm{Rep}}
\newcommand{\SRep}{\mathrm{SRep}}
\newcommand{\Hom}{\mathrm{Hom}}
\newcommand{\HHom}{\mathcal{H}\mathrm{om}}
\newcommand{\Lie}{\mathrm{Lie}}
\newcommand{\K}{K^{\mathrm{def}}}
\newcommand{\mK}{\mathcal{K}_{\mathrm{def}}}
\newcommand{\SK}{SK_{\mathrm{def}}}
\newcommand{\dom}{\mathrm{dom}}
\newcommand{\codom}{\mathrm{codom}}
\newcommand{\Ob}{\mathrm{Ob}}
\newcommand{\Mor}{\mathrm{Mor}}
\newcommand{\+}[1]{\underline{#1}_+}
\newcommand{\Fin}{\Gamma^{\mathrm{op}}}
\newcommand{\f}[1]{\underline{#1}}
\newcommand{\hofib}{\mathrm{hofib}}
\newcommand{\Stab}{\mathrm{Stab}}
\newcommand{\wtStab}{\wt{\mathrm{Stab}}}
\newcommand{\Css}{\mathcal{C}_{ss}}
\newcommand{\Map}{\mathrm{Map}}
\newcommand{\bMap}{\mathrm{Map_*}}
\newcommand{\bdMap}{\mathrm{Map_*^\delta}}
\newcommand{\flatc}{\mathcal{A}_{\mathrm{flat}}}
\newcommand{\F}[1]{\mathrm{Flag}(\vect{#1})}
\newcommand{\p}{\vect{p}}
\newcommand{\avg}{\mathrm{avg}}
\newcommand{\smsh}[1]{\ensuremath{\mathop{\wedge}_{#1}}}
\newcommand{\ol}[1]{\overline{#1}}
\newcommand{\Vect}{\mathrm{Vect}}
\newcommand{\bv}{\bigvee}
\newcommand{\Gr}{\mathrm{Gr}}
\newcommand{\Mf}{\mathcal{M}_{\textrm{flat}}}
\newcommand{\ku}{\mathbf{ku}}
\newcommand{\Susp}{\Sigma}
\newcommand{\Id}{\textrm{Id}}
\newcommand{\id}{\textrm{Id}}
\newcommand{\xmaps}{\xrightarrow}
\newcommand{\srm}[1]{\stackrel{#1}{\maps}}
\newcommand{\srt}[1]{\stackrel{#1}{\to}}
\newcommand{\sm}{\wedge}
\newcommand{\conv}{\Rightarrow}
\newcommand{\Tor}{\textrm{Tor}}
\newcommand{\goesto}{\mapsto}
\newcommand{\nd}{\noindent}
\newcommand{\Ind}{\mathrm{Ind}}
\newcommand{\bInd}{\overline{\mathrm{Ind}}}
\newcommand{\R}{\mathrm{R}}
\newcommand{\bR}{\overline{\mathrm{R}}}
\newcommand{\bRf}{\overline{\mathrm{R}}^{\mathrm{free}}}
\newcommand{\ra}{\rangle}
\newcommand{\la}{\langle}
\newcommand{\Sum}{\mathrm{Sum}}
\newcommand{\Res}{\mathrm{Res}}
\newcommand{\Proj}{\mathrm{Proj}}
\newcommand{\GL}{\mathrm{GL}}
\newcommand{\PU}{\mathrm{PU}}
\newcommand{\Irr}{\mathrm{Irr}}
\newcommand{\rHom}{\Hom_A (\Gamma, U(n))}
\newcommand{\qcd}{\bbQ\mathrm{cd}}
\newcommand{\rk}{\mathrm{rank}}
\newcommand{\Irrf}{\Irr_n (H)^{\mathrm{free}}}
\newcommand{\brho}{\overline{\rho}}
\newcommand{\Sp}{\mathrm{Sp}}
\newcommand{\Span}{\mathrm{Span}}
\newcommand{\Img}{\mathrm{Im}}
\newcommand{\bSum}{\overline{\mathrm{Sum}}}
\newcommand{\bHom}{\overline{\mathrm{Hom}}}
\newcommand{\bIrr}{\overline{\mathrm{Irr}}}
\newcommand{\bIrrp}{\overline{\mathrm{Irr}}^+}
\newcommand{\Isom}{\mathrm{Isom}}
\newcommand{\tIrr}{\wt{\Irr}}
\newcommand{\pIrr}{\partial \tIrr}
\newcommand{\defn}{\mathrel{\mathop :}=}
\newcommand{\psubset}{\subsetneqq}
\newcommand{\propersubset}{\subsetneqq}

\def\co{\colon\thinspace}

\title[Representation theory of crystallographic groups]{Periodicity in the stable representation theory of crystallographic groups}
\author[D\,A Ramras]{Daniel A. Ramras\\
New Mexico State University\\
Las Cruces, New Mexico 88003-8001\\
ramras@nmsu.edu}

\address{New Mexico State University\\
Department of Mathematical Sciences\\
P.O. Box 30001\\
Department 3MB\\
Las Cruces, New Mexico 88003-8001 }
\email{ramras@nmsu.edu}
\urladdr{http://www.math.nmsu.edu/~ramras/}

\thanks{Partially supported by NSF grants DMS-0353640 (RTG),  DMS-0804553, and DMS-0968766.}

\keywords{deformation K-theory, crystallographic group, Quillen-Lichtenbaum conjecture, representation space}

\subjclass[2000]{Primary 20H15, 19E20; Secondary 14P10, 20C25, 19L41}

%\subject{primary}{msc2010}{19E20}  %Relations with cohomology theories [See also 14Fxx] 
%\subject{primary}{msc2010}{20H15} %Other geometric groups, including crystallographic  groups
%\subject{secondary}{msc2010}{14P10}  %Semialgebraic sets and related spaces
%\subject{secondary}{msc2010}{20C25}  %Projective representations and multipliers
%\subject{secondary}{msc2010}{19L41}  %Connective K-theory

 \begin{abstract}
Deformation K--theory associates to each discrete group $G$ a spectrum built from spaces of finite dimensional unitary representations of $G$.  In all known examples, this spectrum is 2--periodic above the rational cohomological dimension of $G$ (minus 2), in the sense that T. Lawson's Bott map is an isomorphism on homotopy in these dimensions.  We establish a periodicity theorem for crystallographic subgroups of the isometries of $k$--dimensional Euclidean space.  For a certain subclass of torsion-free crystallographic groups, we prove a vanishing result for the homotopy groups of the stable moduli space of representations, and we provide examples relating these homotopy groups to the cohomology of $G$.

These results are established as corollaries of the fact that for each $n > 0$, the one-point compactification of the moduli space of irreducible $n$--dimensional representations of $G$ is a CW complex of dimension at most $k$.  This is proven using  real algebraic geometry and projective representation theory.

\end{abstract}

\maketitle{}

%%%%%%%%%%%%%%%%%%%%%%%%%%%%%%%%%%%%%%%%%%%%%%%%%%%
%%%%%%%%%%%%%%%%%%%%%%%%%%%%%%%%%%%%%%%%%%%%%%%%%%%

\section{Introduction}

Given a discrete group $\Gamma$, Carlsson's deformation $K$--theory spectrum $\K(\Gamma)$ provides a stable--homotopical setting in which to study the unitary representation spaces $\Hom(\Gamma, U(n))$.  
This spectrum can be described as the $K$--theory of the topological permutative category of $U(n)$--representations of $\Gamma$, and its zeroth space admits an explicit description in terms of representation spaces (see Ramras \cite[Section 2]{Ramras-excision}).
For products of aspherical surface groups, $\K (\Gamma)$ is 2-periodic above the rational cohomological dimension of $\Gamma$ minus 2 (Ramras \cite{Ramras-stable-moduli}).  Specifically, T. Lawson's Bott map \cite{Lawson-prod-form} induces an isomorphism in homotopy
$$\beta_* \co \pi_* \K (\Gamma)  \srm{\isom}  \pi_{*+2} \K(\Gamma)$$
for $*> \qcd (\Gamma) - 2$.
One consequence of this result is that the stable moduli space
$$\Hom(\Gamma, U)/U \homeo \colim_n \Hom (\Gamma, U(n))/U(n)$$
has vanishing homotopy in dimensions greater than $\qcd(\Gamma)$.  For an aspherical, closed manifold $M$ with fundamental group $\Gamma$, this space is also the stable moduli space $\Mf(M)$ of flat, unitary connections on bundles over $M$.  
In this article we prove periodicity and vanishing results for new classes of groups.  Our periodicity result applies to any group with a finite index subgroup isomorphic to $\bbZ^k$ for some $k\geqs 0$; note that two such finite index subgroups always have the same rank.  We say that such groups are \emph{virtually} $\bbZ^k$.  All crystallographic groups $\Gamma < \Isom (\bbR^k)$ are virtually $\bbZ^k$.

\begin{theorem}[Section \ref{period}]$\label{finite-ext}$
If $\Gamma$ is virtually $\bbZ^k$ for some $k\geqs 0$, then the Bott map 
$$\beta_* \co \pi_*\K (\Gamma) \srm{\isom} \pi_{*+2} \K (\Gamma)$$
is an  isomorphism  for $*> k-2$.
\end{theorem}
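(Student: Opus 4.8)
The plan is to reduce the assertion, via T.~Lawson's analysis of the Bott map, to the dimension bound on the moduli spaces of irreducible representations that is the paper's main technical result. Lawson's Bott map is realized by a homotopy cofiber sequence $\Sigma^2\K(\Gamma)\srm{\beta}\K(\Gamma)\to\mcC(\Gamma)$, and the first step is to extract from its long exact homotopy sequence the implication: if $\pi_m\mcC(\Gamma)=0$ for all $m>k$, then $\beta_*\co\pi_j\K(\Gamma)\to\pi_{j+2}\K(\Gamma)$ is injective for $j\geqs k-2$ and an isomorphism for $j\geqs k-1$ --- exactly the claim. (No connectivity of $\K(\Gamma)$ is even needed here: $\pi_{j+3}\mcC(\Gamma)=0$ kills the kernel of $\beta_*$ and $\pi_{j+2}\mcC(\Gamma)=0$ kills its cokernel.) So everything comes down to bounding $\pi_*\mcC(\Gamma)$.

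Next I would invoke Lawson's identification of $\mcC(\Gamma)$: it carries an exhaustive filtration, indexed by the dimension $n$ of representations, whose $n$-th subquotient is equivalent to $\tIrr_n(\Gamma)^+\sm H\bbZ$ --- the one-point compactification of the moduli space $\Irr_n(\Gamma)/U(n)$ of irreducible $n$-dimensional unitary representations, smashed with the integral Eilenberg--MacLane spectrum. This yields a spectral sequence with $E^1$-page $\bigoplus_{n\geqs 1}\wt{H}_*\bigl(\tIrr_n(\Gamma)^+;\bbZ\bigr)$ converging to $\pi_*\mcC(\Gamma)$. Granting that each $\tIrr_n(\Gamma)^+$ is a CW complex of dimension at most $k$, the $E^1$-page vanishes above degree $k$, hence so does the abutment, and we obtain $\pi_m\mcC(\Gamma)=0$ for $m>k$ as required. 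The crucial feature is that the bound $k$ is uniform in $n$; for groups whose moduli of irreducibles grow in dimension with $n$ --- surface groups, say --- this route collapses, and periodicity there has to be proven by other means (cf.~\cite{Ramras-stable-moduli}).

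The remaining, and genuinely hard, task is the CW-dimension bound itself; the homotopy theory above is Lawson's. Here I would proceed as follows. Fix a finite-index normal subgroup $A\normal\Gamma$ with $A\isom\bbZ^k$ and set $Q=\Gamma/A$. For an irreducible unitary representation $\rho$ of $\Gamma$, Clifford theory shows that $\rho|_A$ decomposes into a single $Q$-orbit of characters $\chi\in\widehat{A}\isom(S^1)^k$; writing $\Gamma_\chi<\Gamma$ for the preimage of the stabilizer $Q_\chi<Q$ of $\chi$, the representation $\rho$ is induced from an irreducible representation of $\Gamma_\chi$ that is $\chi$-isotypic on $A$, and such representations are in bijection with the irreducible $\omega_\chi$-projective representations of the finite group $Q_\chi$, for a $2$-cocycle $\omega_\chi$ assembled from $\chi$ and the extension $1\to A\to\Gamma_\chi\to Q_\chi\to 1$. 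As $Q_\chi$ is finite, there are only finitely many of these of each dimension, and the class $[\omega_\chi]\in H^2(Q_\chi;U(1))$ varies locally constantly with $\chi$. Assembled carefully, this exhibits $\Irr_n(\Gamma)/U(n)$ as the source of a semialgebraic map with finite fibers onto a subset of $(S^1)^k/Q$; since semialgebraic maps with finite fibers preserve dimension and $\dim\bigl((S^1)^k/Q\bigr)=k$, we get $\dim\bigl(\Irr_n(\Gamma)/U(n)\bigr)\leqs k$. Finally, being a semialgebraic set, $\Irr_n(\Gamma)/U(n)$ admits a semialgebraic compactification $\tIrr_n(\Gamma)$ of the same dimension whose frontier $\pIrr_n(\Gamma)$ has dimension $\leqs k-1$, so $\tIrr_n(\Gamma)^+=\tIrr_n(\Gamma)/\pIrr_n(\Gamma)$ is a CW complex of dimension $\leqs k$.

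The main obstacle is thus not the spectral sequence but making that last paragraph precise: producing a single, globally defined semialgebraic map $\Irr_n(\Gamma)/U(n)\to(S^1)^k/Q$ with finite fibers even though the stabilizer $Q_\chi$ and the cocycle $\omega_\chi$ jump as $\chi$ moves (so the inducing-from-$\Gamma_\chi$ / projective-representation dictionary has to be made to vary algebraically rather than stratum by stratum), and then transporting the semialgebraic dimension estimate into a genuine bound on the CW dimension of the one-point compactification. This is precisely where the paper's real algebraic geometry --- semialgebraic dimension theory and triangulation --- and its projective representation theory come together.
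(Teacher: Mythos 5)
Your homotopy-theoretic reduction is exactly the paper's: Lawson's cofiber sequence $\Susp^2\K(\Gamma)\to\K(\Gamma)\to\Rdef(\Gamma)$, his filtration of $\Rdef(\Gamma)$ by the dimension of irreducible summands with subquotients $\Sp\left(\Sym^\infty\bIrrp_n(\Gamma)\right)$, the Dold--Thom identification of their homotopy with $\wt{H}_*(\bIrrp_n(\Gamma);\bbZ)$, and the same index bookkeeping as in Corollary \ref{hom-per}. Where you genuinely diverge is in the proof of the dimension bound $\dim\bIrrp_n(\Gamma)\leqs k$. You propose one global map $\bIrr_n(\Gamma)\to\Hom(A,S^1)/Q$ sending $[\rho]$ to the $Q$--orbit of characters occurring in $\rho|_A$, with finite fibers by Clifford theory plus Schur's finiteness theorem for twisted group algebras, followed by the semi-algebraic fiber-dimension theorem. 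The paper instead inducts on $|Q|$ via Serre's dichotomy (Theorem \ref{Serre8.1}): irreducibles induced from a proper subgroup $H$ containing $A$ are controlled by the induction hypothesis together with the fact that the semi-algebraic map $\bInd_H^\Gamma$ does not raise dimension, while irreducibles scalar on $A$ are controlled by exhibiting $\rHom_{[\psi]}$ as a fiber bundle over a $\PU(n)$--orbit whose fiber is a finite cover of a semi-algebraic subset of $\Hom(A,S^1)$, with invariance of domain closing the argument. Your route is viable and, once set up, shorter: the graph of $[\rho]\mapsto[\chi]$ is the image, under the product of the two quotient maps, of the set $\{(\rho,\chi)\,:\,\chi\text{ occurs in }\rho|_A\}$, which is semi-algebraic by elimination of quantifiers; and, contrary to the worry you raise at the end, no semi-algebraic control of the cocycle $\omega_\chi$ or of the jumping stabilizers $Q_\chi$ is required, since finiteness of each individual fiber is a pointwise statement. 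What your route costs is an input the paper never needs: the refinement of Theorem \ref{alg-tame} in \cite{BCR} asserting that a semi-algebraic map with finite fibers satisfies $\dim(\mathrm{source})\leqs\dim(\mathrm{image})$; the paper only ever uses that images do not gain dimension, paying for this economy with the induction on $|Q|$ and the local analysis of the projective stratum. Your final step --- triangulate $\bHom_n(\Gamma)$ compatibly with the closure of $\bIrr_n(\Gamma)$ and its frontier in $\bSum_n(\Gamma)$, then collapse to get a CW structure of dimension $\leqs k$ on the one-point compactification --- is the same use of Theorem \ref{triang-thm} as in the paper.
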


For $\Gamma$ a product of aspherical surface groups, it was shown using Yang--Mills theory that  in dimensions $*>\qcd(\Gamma) - 2$ there are isomorphisms $\pi_* \K(\Gamma)\isom K^{-*} (B\Gamma)$, where $K^{-*} (B\Gamma)$ is the complex $K$--theory of $B\Gamma$ (Ramras~\cite{Ramras-surface, Ramras-stable-moduli}).   By analogy, we make the following conjecture.

\begin{conjecture}$\label{conj}$
If $\Gamma < \Isom (\bbR^k)$ is a torsion-free crystallographic group, then for $*>\qcd (\Gamma)-2$ there is an isomorphism $\pi_* \K (\Gamma) \isom K^{*} (\bbR^k/\Gamma)$.
\end{conjecture}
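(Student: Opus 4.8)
To approach Conjecture~\ref{conj}, the strategy is to compare $\K(\Gamma)$ with topological $K$--theory through the natural map that classifies the flat bundle associated to a representation, to show this becomes an equivalence after inverting the Bott element, and then to invoke Theorem~\ref{finite-ext} to transport the conclusion into the asserted range of degrees. Concretely, sending $\rho\co\Gamma\to U(n)$ to the flat unitary bundle it classifies over $B\Gamma\heq\bbR^k/\Gamma$ produces a map of spectra $\alpha_\Gamma\co\K(\Gamma)\maps KU(\bbR^k/\Gamma)$, where $KU(\bbR^k/\Gamma)$ is the function spectrum $\mathbf{F}\bigl((\bbR^k/\Gamma)_+,KU\bigr)$, with $\pi_*KU(\bbR^k/\Gamma)=K^{*}(\bbR^k/\Gamma)$ (variance being immaterial by $2$--periodicity). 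Since the target is already Bott-periodic, $\alpha_\Gamma$ factors through $\K(\Gamma)[\beta^{-1}]$; and since $\Gamma$ is torsion-free crystallographic, $B\Gamma=\bbR^k/\Gamma$ is a closed aspherical $k$--manifold, so $\qcd(\Gamma)=k$ and Theorem~\ref{finite-ext} makes the localization map $\pi_*\K(\Gamma)\to\pi_*\K(\Gamma)[\beta^{-1}]$ an isomorphism for $*>\qcd(\Gamma)-2$. Hence Conjecture~\ref{conj} follows from the single assertion that $\alpha_\Gamma$ induces an equivalence $\K(\Gamma)[\beta^{-1}]\heq KU(\bbR^k/\Gamma)$.

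To prove that $\beta$--periodic equivalence I would use the paper's main theorem as the source of finiteness. Since $\Gamma$ is crystallographic with finite holonomy group $F$, Clifford theory applied to the extension $1\to\bbZ^k\to\Gamma\to F\to1$ bounds the dimension of every irreducible unitary representation of $\Gamma$ by $|F|$, so only finitely many of the moduli spaces $\M^{\mathrm{irr}}_m(\Gamma)$ are nonempty, and by the main theorem each one-point compactification $(\M^{\mathrm{irr}}_m(\Gamma))^+$ is a CW complex of dimension at most $k$. Feeding this into the decomposition of $\K(\Gamma)$ by representation type --- the analogue for general $\Gamma$ of the Harder--Narasimhan/Atiyah--Bott recursion that organizes $\Hom(\Gamma,U(n))$ by the multiplicities of irreducible summands, as used in the surface case --- should present $\K(\Gamma)[\beta^{-1}]$ as a finite homotopy colimit of $KU$--module spectra built from the finitely many finite-dimensional complexes $(\M^{\mathrm{irr}}_m(\Gamma))^+$ together with their residual symmetries. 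The matching with $KU(\bbR^k/\Gamma)$ would then be carried out one stratum at a time: projective representation theory (as already used to prove the main theorem) identifies $\M^{\mathrm{irr}}_m(\Gamma)$ with a geometric locus in $(\bbR^k/\bbZ^k)/F=\bbR^k/\Gamma$ determined by the order of the residual $F$--stabilizer, and one verifies that $\alpha_\Gamma$ carries the $m$--th piece of the decomposition onto the contribution of that locus in a Mayer--Vietoris (equivalently, Atiyah--Hirzebruch) analysis of $KU(\bbR^k/\Gamma)$, with the case $\Gamma=\bbZ^k$ --- where $\K(\bbZ^k)[\beta^{-1}]\heq KU(\bbR^k/\bbZ^k)$ by Lawson's product formula --- serving as the base of an induction on the holonomy.

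An attractive alternative is to bypass the stratification in favor of Galois descent. The equivalence $KU(\bbR^k/\Gamma)\heq KU(\bbR^k/\bbZ^k)^{hF}$ holds for formal reasons, since $F$ acts freely so that $\bbR^k/\Gamma$ is the Borel construction $(\bbR^k/\bbZ^k)\times_F EF$; one then wants the parallel statement $\K(\Gamma)[\beta^{-1}]\heq\bigl(\K(\bbZ^k)[\beta^{-1}]\bigr)^{hF}$, which combined with Lawson's computation (upgraded to an $F$--equivariant equivalence $\K(\bbZ^k)[\beta^{-1}]\heq KU(\bbR^k/\bbZ^k)$) would finish the argument. This is exactly a ``Quillen--Lichtenbaum/Thomason'' descent principle for Bott-inverted deformation $K$--theory along $B\bbZ^k\to B\Gamma$.

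The essential difficulty, in either route, is relating the effect of $\alpha_\Gamma$ to the geometric picture. In the surface case the corresponding comparison was powered by Yang--Mills theory: the Morse stratification of the Yang--Mills functional on the space of connections matched the algebraic stratification by representation type, and no such geometric input is available for crystallographic groups. A homotopy-theoretic substitute must show that $\alpha_\Gamma$ respects the two filtrations compatibly enough that an equivalence on associated graded pieces forces an equivalence of spectra --- in effect controlling the attaching maps and extension problems in the representation-type decomposition of $\K(\Gamma)$ against those in the Atiyah--Hirzebruch filtration of $KU(\bbR^k/\Gamma)$ --- while also keeping track of the orientation bookkeeping implicit in $\pi_*\K(\Gamma)\isom K^{*}(\bbR^k/\Gamma)$, which is delicate because flat manifolds need not be orientable. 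If one instead pursues descent, the obstacle merely relocates: deformation $K$--theory is not manifestly a sheaf for finite covers and carries no evident transfer maps, so the homotopy-fixed-point comparison would have to be constructed by hand, once again presumably by exploiting the finiteness supplied by the main theorem to force the relevant descent spectral sequence to converge and degenerate in the correct range.
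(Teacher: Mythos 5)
The statement you are trying to prove is Conjecture~\ref{conj}: the paper does not prove it, and there is no argument in the paper to compare yours against. The author explicitly leaves it open, remarks that only a \emph{rational} version in degrees $*>k-2$ will be established in future work ``using methods rather different from those used here,'' and warns that the naive analogue fails already for the integral Heisenberg group, so any proof must use the crystallographic (and torsion-free) hypotheses in an essential way. Your text is a research program rather than a proof: the single assertion to which you reduce everything --- that $\alpha_\Gamma$ induces an equivalence $\K(\Gamma)[\beta^{-1}]\simeq KU(\bbR^k/\Gamma)$ --- is essentially the entire content of the conjecture, and neither of your two routes establishes it. The stratification route presupposes a decomposition of $\K(\Gamma)$ by representation type with controlled attaching maps, which is not constructed here or in the cited literature (in the surface case this control came from Yang--Mills theory, as you note); the descent route presupposes transfers and a convergent, degenerate homotopy-fixed-point spectral sequence for $\K(-)[\beta^{-1}]$ along $B\bbZ^k\to B\Gamma$, none of which is available. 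You candidly acknowledge these as ``the essential difficulty,'' which is another way of saying the proof is missing.

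There is also a concrete error in your reduction step. You assert that for torsion-free crystallographic $\Gamma$ one has $\qcd(\Gamma)=k$ because $\bbR^k/\Gamma$ is a closed aspherical $k$--manifold. That is only true when $\bbR^k/\Gamma$ is orientable; the paper points out that in the non-orientable case $\qcd(\Gamma)<k$ (e.g.\ the Klein bottle group $\Gamma_1$ has $\qcd=1$ while $k=2$). Theorem~\ref{finite-ext} gives periodicity only for $*>k-2$, so even if you had the Bott-inverted equivalence, you would obtain the isomorphism of the conjecture only in the range $*>k-2$, which is strictly smaller than the conjectured range $*>\qcd(\Gamma)-2$ whenever the flat manifold is non-orientable. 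Closing that gap requires improving the periodicity theorem below $k-2$, which the paper identifies as an open problem and resolves only for the single family $\Gamma_k=\bbZ^k\rtimes\bbZ$ by the explicit computation in Section~\ref{examples}.
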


If $\Gamma$ is virtually $\bbZ^k$, a  transfer argument shows that $H^*(\Gamma; \bbQ) = 0$ for $*>k$.  Any finite index subgroup of $\Gamma$ is also virtually $\bbZ^k$, so the virtual rational cohomological dimension of $\Gamma$ is precisely $k$ (but the rational cohomological dimension of $\Gamma$ could be less than $k$).  When $\Gamma$ is crystallographic and torsion-free, $\bbR^{k} \to \bbR^{k}/\Gamma$ is a covering space (Wolf \cite[Theorem 3.1.3]{Wolf}).  The Euclidean space form $\bbR^{k}/\Gamma$ is a model for the classifying space $B\Gamma$, so if 
$\bbR^{k}/\Gamma$ is orientable, then the rational and integral cohomological dimensions of $\Gamma$ are both $k$.   It would be interesting if Theorem \ref{finite-ext} could be improved in the case when $\bbR^k/\Gamma$ is non-orientable, since then $\qcd(\Gamma)<k$; we obtain such results for one infinite family in Section \ref{examples}.
Using methods rather different from those used here, it will be shown in future work of the author that the isomorphism in Conjecture~\ref{conj} holds \emph{rationally} in dimensions $*>k-2$.

It is important to note that Conjecture~\ref{conj} \emph{fails} in general.  In \cite[\S4.2]{Lawson-simul}, Lawson shows that for the integral Heisenberg group, the Bott map is an isomorphism in dimensions greater than zero, but the periodic groups are much larger than the $K$--theory of the classifying space.  Examples of groups with trivial deformation $K$--theory and non-trivial complex $K$--theory appear in Ramras~\cite[\S1]{Ramras-stable-moduli}.

%\vspace{.1in}

Theorem \ref{finite-ext}, Conjecture \ref{conj}, and the results on surface groups discussed above are analogous to the the Atiyah--Segal Theorem and to the Quillen--Lichtenbaum Conjecture~\cite{Levine, OR} in algebraic $K$--theory (now a theorem of Voevodsky), which states that the algebraic $K$--theory of a scheme should agree with \'{e}tale $K$--theory (mod $l$) in dimensions greater than the (virtual, mod $l$) \'{e}tale cohomological dimension minus 2.    
 Conjectures of Carlsson~\cite{Carlsson-derived-rep} (see also the introduction to Lawson~\cite{Lawson-prod-form}) link deformation $K$--theory to algebraic $K$--theory of fields, while \'{e}tale $K$--theory bears many similarities to ordinary complex $K$--theory (for example, Soul\'{e}'s \'{e}tale Chern character~\cite{Soule}). Recent work of the author and T. Baird (in preparation) explains the appearance of \emph{rational} cohomology in this picture, by showing that classes in $H^*(\Gamma; \bbQ)$ provide obstructions to surjectivity of a natural map 
$\alpha_*\co \pi_* \K (\Gamma) \to K^* (B\Gamma)$.   

Lawson's work establishes a close relationship between $\K(\Gamma)$ and the space 
$$\Hom(\Gamma, U)/U \homeo \colim_n \Hom (\Gamma, U(n))/U(n),$$
which we call the \emph{stable moduli space} of representations.
In good cases (see Lemma \ref{stably-gplike} and Theorem \ref{Bott}), there is an Atiyah--Hirzebruch style spectral sequence converging from $\pi_* (\bbZ\cross \Hom(\Gamma, U)/U)\otimes \pi_* \ku$ to $\pi_* \K(\Gamma)$.  (In general, the $E_2$ term has a more complicated description; see Theorem~\ref{Bott}.)  This spectral sequence is analogous to the truncated Beilinson--Bloch--Lichtenbaum spectral sequence \cite{Bloch-Lichtenbaum} (see also Suslin~\cite{Suslin-SS}, Levine~\cite{Levine-SS}), which converges from motivic cohomology to algebraic K--theory.  Motivic cohomology is often viewed as an integral cohomology theory for schemes, so one might expect an analogous relationship between $\pi_* \Hom(\Gamma, U)/U$ and $H^*(\Gamma; \bbZ)$.  These groups agree up to torsion when $\Gamma$ is a product of aspherical surface groups (Ramras~\cite{Ramras-stable-moduli}), and a similar result is established for the examples in Section \ref{examples}.  Viewed as a comparison between the $E_2$ terms of the Atiyah--Hirzebruch spectral sequences for deformation $K$--theory and complex $K$--theory, these results are then in analogy with comparisons between motivic and \'etale cohomology (for example, Mazza--Voevodsky--Weibel \cite[Theorem 10.2]{MVW}).
As a further step in this direction, we  use work of  Ratcliffe and Tschantz \cite{R-T} to show that if $E$ is a flat torus bundle over a torus, then
$\pi_* \Hom(\pi_1 E, U)/U$
vanishes (integrally) above the dimension of $E$ (Theorem \ref{stable-moduli}).  This applies to more general manifolds formed by replacing the base torus with a flat torus bundle over a torus (and so on).

The proofs of these results depend on Lawson's spectral sequences \cite{Lawson-prod-form, Lawson-simul}, which allow us to deduce our periodicity and vanishing results from the vanishing of $H_* (\bIrrp_n (\Gamma); \bbZ)$ for $*>k$, where $\bIrrp_n (\Gamma)$ is the one-point compactification of the moduli space of irreducible $U(n)$--representations.
The main result of this article, then, is the following dimension bound.

\begin{theorem}[Section \ref{proofs}]$\label{finite-cplx}$  If $\Gamma$ is virtually $\bbZ^k$, then for every $n>0$, the space $\bIrrp_n (\Gamma)$ has a CW structure of dimension at most $k$.
\end{theorem}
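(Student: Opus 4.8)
\emph{The plan} is to realize $\bIrrp_n(\Gamma)$ as a finite simplicial complex using semialgebraic geometry, and to bound its dimension by combining the algebraic structure of representation spaces with Clifford theory and the representation theory of finite groups. First I would replace the given finite-index copy of $\bbZ^k$ in $\Gamma$ by its normal core $A \trianglelefteq \Gamma$, which is again isomorphic to $\bbZ^k$, and set $Q = \Gamma/A$, a finite group acting (through conjugation on $A$) on the Pontryagin dual $\widehat A = \Hom(A, U(1)) \homeo (S^1)^k$. Since $\Gamma$ is finitely generated, $\Hom(\Gamma, U(n))$ is a compact real algebraic set on which $U(n)$ acts algebraically, so its orbit space $\Hom(\Gamma,U(n))/U(n)$ is a compact semialgebraic set. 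The irreducible locus $\Hom(\Gamma,U(n))^{\mathrm{irr}}$ is open and $U(n)$-invariant, so $\bIrr_n(\Gamma)$ is an open subspace of this orbit space, and in particular a locally compact, Hausdorff semialgebraic set.

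Next I would study restriction to $A$. Because commuting unitary matrices can be simultaneously diagonalized, $\Hom(\bbZ^k, U(n))/U(n) \homeo \Sym^n(\widehat A)$, the class of a representation corresponding to its multiset of joint eigen-characters; hence restriction induces a semialgebraic map $r \co \bIrr_n(\Gamma) \to \Sym^n(\widehat A)$. By Clifford's theorem, for an irreducible $\rho$ the multiset $r([\rho])$ consists of $e$ copies of each character in a single $Q$-orbit $Q\chi \subseteq \widehat A$, with $e \geqs 1$ and $e\cdot[Q:\Stab_Q(\chi)] = n$. Taking the support of this multiset and then its class in $\widehat A / Q$ yields a semialgebraic map $\Phi \co \bIrr_n(\Gamma) \to \widehat A / Q$. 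I claim its fibers are finite. Indeed, Clifford theory identifies the isomorphism classes of irreducible $\rho$ with $\rho|_A$ supported on a fixed orbit $Q\chi$ with the isomorphism classes of irreducible $\alpha_\chi$-projective representations of the finite group $\Stab_Q(\chi)$, where $\alpha_\chi \in H^2(\Stab_Q(\chi); U(1))$ is determined by $\chi$ and the extension $1 \to A \to \Stab_\Gamma(\chi) \to \Stab_Q(\chi) \to 1$; since the twisted group algebra of a finite group is finite-dimensional semisimple there are only finitely many such, and the requirement that $[Q:\Stab_Q(\chi)]$ times the dimension of the projective representation equal $n$ cuts this finite set down further.

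The dimension bound then follows: $\widehat A / Q$ is a semialgebraic set of dimension at most $k$, being a finite quotient of $(S^1)^k$, and a semialgebraic map with finite fibers cannot raise dimension, so $\dim \bIrr_n(\Gamma) \leqs \dim \Phi(\bIrr_n(\Gamma)) \leqs \dim(\widehat A / Q) \leqs k$. To conclude, realize $\bIrr_n(\Gamma)$ as a semialgebraic subset $X \subseteq \bbR^N$ and let $\ol X$ denote its closure in $S^N \supseteq \bbR^N$; this is a compact semialgebraic set with $\dim \ol X = \dim X \leqs k$, and since $X$ is locally compact and dense in $\ol X$ it is open there, so $\ol X \setminus X$ is closed. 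Choosing a semialgebraic triangulation of the pair $(\ol X, \ol X \setminus X)$ and collapsing the subcomplex $\ol X \setminus X$ produces a CW complex of dimension at most $k$ which, by the uniqueness of the one-point compactification, is homeomorphic to $X^+ = \bIrrp_n(\Gamma)$ (the case where $\bIrr_n(\Gamma)$ is already compact being trivial).

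The step I expect to be the main obstacle is making the bridge between these two worlds precise: verifying that $\Phi$ is genuinely a \emph{semialgebraic} map, so that semialgebraic dimension theory applies, and that the Clifford/projective-representation correspondence holds on the nose, so that the fibers of $\Phi$ are literally finite rather than only generically finite over $\widehat A / Q$. Once $\bIrr_n(\Gamma)$ is known to be a semialgebraic set of dimension at most $k$, the triangulation of $\bIrr_n(\Gamma)$ and the passage to its one-point compactification are routine.
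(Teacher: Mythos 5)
Your argument is correct in outline, but it takes a genuinely different route from the paper's. The paper's proof runs through Serre's dichotomy (Theorem \ref{Serre8.1}): it inducts on the order of the point group $Q$, handles irreducibles induced from proper subgroups $H\supseteq A$ via Corollary \ref{tame}, and handles irreducibles that are scalar on $A$ by exhibiting $\rHom_{[\psi]}$ as a fiber bundle over a $\PU(n)$--orbit whose fiber is a finite cover of a closed semi-algebraic subset of $\Hom(A,S^1)$, finishing with an invariance-of-domain argument. You collapse both cases into the single map $\Phi\co \bIrr_n(\Gamma)\to \widehat{A}/Q$ and use Clifford theory together with Schur's finiteness of irreducible projective representations (the paper's Corollary \ref{discrete}) to see that the fibers of $\Phi$ are finite; this eliminates the induction and the bundle analysis, at the cost of invoking the semi-algebraic fiber-dimension theorem $\dim X\leqs \dim f(X)+\max_y \dim f^{-1}(y)$, which is standard (a consequence of Hardt's trivialization theorem, \cite[Chapter 9]{BCR}) but strictly stronger than the paper's Theorem \ref{alg-tame}. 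Two points you rightly flag need care: (i) $\Phi$ should be defined via its graph --- the relation ``$\chi$ occurs in $\rho|_A$'' is an existential condition on eigenvectors, hence semi-algebraic by elimination of quantifiers --- rather than by composing with the support map on $\Sym^n(\widehat A)$, which is not continuous where multiplicities collide; (ii) for finiteness of fibers, a single projective unitary equivalence class of irreducibles of $\Stab_Q(\chi)$ contains only finitely many linear isomorphism classes of genuine representations of $\Stab_\Gamma(\chi)$ restricting to $\chi$ on $A$, since any two differ by a character of $\Stab_Q(\chi)$ (this mirrors the paper's Proposition \ref{finite-cover2}); and in any case finiteness of irreducible modules over the finite-dimensional twisted group algebra does not even require semisimplicity. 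Your final step --- $\dim\ol{X}=\dim X$, triangulating the compact pair $(\ol{X},\ol{X}\setminus X)$ via Theorem \ref{triang-thm}, and collapsing --- is sound and recovers $\bIrrp_n(\Gamma)$ exactly as in Lemma \ref{one-pt2}.
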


This bound is optimal, in the sense that if $\Gamma$ has a subgroup $A\isom \bbZ^k$ of index $q$,  
then $\bIrrp_q (\Gamma)$ is a CW complex of dimension precisely $k$ (Remark~\ref{max-dim'l}).
Low-dimensionality seems particular to the space of \emph{irreducible} representations: since commuting matrices are simultaneously diagonalizable, there is a homeomorphism $\Hom(\bbZ^k, U(n))/U(n) \homeo \Sym^n((S^1)^k)$, obtained by recording eigenvalues (here $\Sym^n$ is the $n^\textrm{th}$ symmetric product).  Generically, this space is a manifold of dimension $nk$.  Note that in this case, there are no irreducible representations (unless $n=1$, when we simply have a $k$--dimensional torus).

The necessary CW structures are produced using real algebraic geometry.  The bound on dimensions comes from combining information about the induction maps 
with projective representation theory of finite groups.   

\vspace{.1in}
\nd {\bf Organization:}  In Section \ref{real} we review facts from real algebraic geometry and show (following Schwarz \cite{Schwarz-smooth}) that $\Hom (\Gamma, U(n))/U(n)$ is semi-algebraic.  Section \ref{crystal} reviews crystallographic groups and Section \ref{rep-spaces} studies spaces of representations.  In Section \ref{ind-proj} we split the irreducible representations of $\Gamma$ into two classes: induced representations and those yielding projective representations of a finite quotient $\Gamma/A$.  We study these two classes in Sections \ref{ind} and \ref{proj}.  In Section \ref{proofs}, we prove the main result on the dimension of $\bIrrp_n (\Gamma)$ (Theorem \ref{finite-cplx}), and in Section \ref{period} we prove our periodicity theorem (Theorem \ref{finite-ext}).  The application to stable moduli spaces appears in Section \ref{sm-sec}, and Section~\ref{examples} contains some explicit computations.

\vspace{.1in}

\nd {\bf Acknowledgements:}  I thank Fred Cohen for suggesting crystallographic groups as a source of examples in deformation $K$--theory, Tyler Lawson and Qayum Khan for several helpful conversations regarding representation theory, and Sean Lawton for pointing out Schwarz's work on quotient spaces.  Comments from the anonymous referees helped to improve the exposition.

%%%%%%%%%%%%%%%%%%%%%%%%%%%%%%%%%%%%%%%%%%%%%%%%%%%
%%%%%%%%%%%%%%%%%%%%%%%%%%%%%%%%%%%%%%%%%%%%%%%%%%%

\section{Background on real algebraic geometry} $\label{real}$

In this section we review  concepts and results from real algebraic geometry needed in the sequel, following Bochnak, Coste, and Roy~\cite{BCR}  (especially Chapters 2 and 9).  In the last subsection, we discuss a result of Schwarz on quotients of semi-algebraic sets by linear actions of compact Lie groups.

By definition, a semi-algebraic subset of $\bbR^N$ is a finite union of sets of the form
$$\{(x_1, \ldots, x_N) \in \bbR^N \, |\, p_1 (x_1, \ldots, x_N) \sim_1 0, \ldots, p_m (x_1, \ldots, x_N) \sim_n 0\},$$
where the $p_i$ are polynomials and the relations $\sim_i$ are either $>$, $<$, or $=$.  The relations $\geqs$ and $\leqs$ are unnecessary since we allow finite unions, and when only the relation $=$ is used, we obtain a real algebraic variety.
For every finitely generated group $\Gamma$, the representation space $\Hom(\Gamma, U(n))$ is a real algebraic variety, cut out by the equations defining $U(n)$ and the relations in $\Gamma$.  (Note that by the Hilbert Basis Theorem, every ideal in the polynomial ring $\bbR[x_1, \ldots, x_n]$ is finitely generated, so there is no need to assume that $\Gamma$ is finitely presented.)

The appropriate notion of morphisms between semi-algebraic subsets $X\subset \bbR^N$ and $Y\subset \bbR^M$ is that of \emph{semi-algebraic functions}.  By definition, a function $f\co X\to Y$ is semi-algebraic if its graph 
$$\Gr (f) = \{(x,y) \in \bbR^N \cross \bbR^M \, |\, x\in X \textrm{ and } y = f(x) \}$$
is a semi-algebraic subset of $\bbR^N \cross \bbR^M$.  Note that, for example, any polynomial mapping $\bbR^N \to \bbR^M$ is semi-algebraic when restricted to a semi-algebraic subset of $\bbR^N$.  We will see below that composites of semi-algebraic maps are semi-algebraic.

We now list the basic closure properties for semi-algebraic sets that we will need.  These results can all be found in \cite[Chapter 2]{BCR}.

\begin{theorem} $\label{closure}$ Let $X\subset \bbR^N$ be semi-algebraic.

\begin{itemize}
\item Complementation: If $A\subset X$  is semi-algebraic, then so is $X \setminus A$.

\item Products: If $Y \subset \bbR^M$ is semi-algebraic, then so is $X\cross Y \subset \bbR^{N+M}$.

\item Interior: The interior of $X$ is semi-algebraic.

\item Tarski--Seidenberg Theorem: Say $f\co X \to \bbR^M$ is semi-algebraic.  Then its image $f(X)$ is a semi-algebraic subset of $\bbR^M$, and if $Y \subset \bbR^M$ is semi-algebraic, then $f^{-1} (Y)$ is semi-algebraic as well.
\end{itemize}
\end{theorem}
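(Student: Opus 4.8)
The plan is to reduce Theorem \ref{closure} to a single substantive ingredient --- the geometric Tarski--Seidenberg theorem on projections --- and to deduce the remaining three items (indeed the whole statement) from it by formal manipulations; the projection theorem itself I would obtain from the Cohen--H\"ormander quantifier-elimination argument (see \cite[Chapters 1--2]{BCR}).

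First I would dispose of the formal parts. \emph{Products} is immediate from the definition: if $X$ is cut out, as a finite union, by sign conditions on polynomials $p_i\in\bbR[x_1,\dots,x_N]$ and $Y$ by sign conditions on $q_j\in\bbR[y_1,\dots,y_M]$, then $X\cross Y$ is cut out by the same conditions with the $p_i$ and $q_j$ regarded as polynomials on $\bbR^{N+M}$. Stability of the class of semi-algebraic sets under complementation in $\bbR^N$ is pure Boolean algebra together with trichotomy of the order: the complement of a basic set $\{p_1\sim_1 0,\dots,p_m\sim_m 0\}$ is $\bigcup_i\{p_i\not\sim_i 0\}$, and each $\{p_i\not\sim_i 0\}$ is itself a finite union of basic sets; distributing the resulting finite intersections over finite unions keeps us inside the class. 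Together with $X\setminus A=X\cap(\bbR^N\setminus A)$ this yields \emph{Complementation}, and more generally shows that every finite Boolean combination of semi-algebraic sets is semi-algebraic --- a fact I use below.

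Next, granting the projection theorem --- that $\pi(S)\subset\bbR^N$ is semi-algebraic whenever $S\subset\bbR^{N+M}$ is semi-algebraic and $\pi$ is a coordinate projection --- the other two items are short. For \emph{Tarski--Seidenberg}: since $f$ is a semi-algebraic function its graph $\Gr(f)\subset\bbR^N\cross\bbR^M$ is by definition semi-algebraic, $f(X)$ is the projection of $\Gr(f)$ onto the $\bbR^M$-factor, hence semi-algebraic, and $f^{-1}(Y)$ is the projection onto the $\bbR^N$-factor of the semi-algebraic set $\Gr(f)\cap(\bbR^N\cross Y)$. For \emph{Interior}: $x\in\mathrm{int}(X)$ iff $\exists\varepsilon>0\ \forall y\,(\|y-x\|^2<\varepsilon^2\Rightarrow y\in X)$; writing $X$ through polynomial sign conditions, this is a first-order formula over the ordered field $\bbR$ whose only quantifiers are on $\varepsilon$ and $y$, so rewriting $\forall$ as $\lnot\exists\lnot$ and pulling the quantifiers to the front exhibits $\mathrm{int}(X)$ as built from semi-algebraic sets by finitely many projections and complementations --- hence semi-algebraic. (Equivalently, the same device shows closures are semi-algebraic, and $\mathrm{int}(X)=\bbR^N\setminus\overline{\bbR^N\setminus X}$.)

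It remains to prove the projection theorem, where the real work lies. By complementation it suffices to treat a basic set $S=\{(x,t):p_1(x,t)\sim_1 0,\dots,p_s(x,t)\sim_s 0\}$ with each $p_i\in\bbR[x_1,\dots,x_N][t]$ and to show that $\{x:\exists t\ (x,t)\in S\}$ is semi-algebraic; a general coordinate projection then follows by induction on the number of eliminated variables. The point I would establish is that there is a finite list $g_1,\dots,g_r\in\bbR[x_1,\dots,x_N]$ --- the leading coefficients in $t$ of the $p_i$ together with all the signed subresultant coefficients of the $p_i$ and their $t$-derivatives --- such that on each subset of $\bbR^N$ where the signs of $g_1,\dots,g_r$ are all prescribed, the combinatorial type of the configuration of real roots of $p_1(x,\cdot),\dots,p_s(x,\cdot)$ (the number of roots, their order along the $t$-line, and the sign of each $p_j$ on each resulting open interval and at each root) is constant. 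On each such piece the truth value of ``$\exists t\ (x,t)\in S$'' is then constant, so the projection is a finite union of sets defined by sign conditions on the $g_\ell$, hence semi-algebraic. The main obstacle is precisely this uniform real-root bookkeeping --- showing that the sign data of the subresultant coefficients governs the real-root configuration uniformly in the parameters $x$. Once that lemma is in hand, all four bulleted statements follow as above.
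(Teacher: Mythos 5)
Your proposal is sound, but it is worth noting that the paper does not prove this theorem at all: it is quoted verbatim from \cite[Chapter 2]{BCR}, and the only argument the paper supplies is the remark immediately following the statement, which reduces the general Tarski--Seidenberg item to the case of coordinate projections exactly as you do (image of $\Gr(f)$, respectively projection of $\Gr(f)\cap(X\cross Y)$). So your treatment of Products, Complementation, Tarski--Seidenberg, and Interior coincides with, or harmlessly extends, what the paper takes for granted; the genuinely new content in your write-up is the sketch of the projection theorem itself. That sketch follows the standard Cohen--H\"ormander/sign-determination route and is correct in outline, but be aware that the key lemma as you state it is a slight oversimplification: a single list consisting of leading coefficients and subresultant coefficients of the $p_i$ and their $t$-derivatives does not in general suffice, because one must handle degree drops (vanishing leading coefficients) and must close the family of polynomials under a recursive process of taking derivatives and (pseudo-)remainders, or equivalently take subresultant sequences of all relevant pairs at all truncations. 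You correctly flag this uniform root-bookkeeping as the one substantive obstacle, and it is precisely the content of the quantifier-elimination machinery in \cite[Chapters 1--2]{BCR}; since the paper itself outsources the entire theorem to that reference, your proposal proves strictly more than the paper does, at the cost of leaving that one (standard but nontrivial) lemma unproved.
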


The Tarski--Seidenberg Theorem follows from the special case of coordinate projections $\bbR^{l} \to \bbR^k$ ($k<l$): given $f\co X\to \bbR^M$ and $Y\subset \bbR^M$, $f(X)$ is the projection of $\Gr(f)$ onto $\bbR^M$ and $f^{-1} (Y)$ is the projection of $\Gr(f)\cap (X \cross Y)$ onto $\bbR^N$.

Using Theorem \ref{closure}, we can now check that composites of semi-algebraic maps are semi-algebraic.  If $f\co X\to Y$ and $g\co Y\to W$ are semi-algebraic, then the graph $\Gr(f\circ g)$ is simply the image of $\Gr(f)$ under the map $\Id_X \cross g$.  Moreover, the graph of a product is simply product of the graphs (up to reordering the coordinates) so products of semi-algebraic maps are semi-algebraic.

The Tarski--Seidenberg Theorem is intimately related to \emph{elimination of quantifiers}.  One special case will be useful to us: if $Z\subset \bbR^{N+M}$ is semi-algebraic, then 
$$X = \{x\in \bbR^N \, |\, \exists y\in \bbR^M \textrm{ such that } (x,y) \in Z\}$$
is semi-algebraic in $\bbR^N$:   $X$ is  the projection of $Z\subset \bbR^{N+M}$ onto $ \bbR^{N}$.  
Another application is that affine simplices in $\bbR^N$ are semi-algebraic.  
Recall that if $v_0, \ldots, v_d\in \bbR^N$ are affinely independent (meaning no $v_i$ is a linear combination of the other $v_j$ with coefficients summing to 1) then  
$$\langle  v_0, \ldots, v_d\rangle := \{x \in \bbR^N \,|\, \exists t_1,\ldots, t_d\in \bbR \textrm{ with } t_i \geqs 0, \, \sum t_i = 1, \, \sum t_i v_i = x\}.$$
Eliminating the existential quantifier shows that $\langle  v_0, \ldots, v_d\rangle$ is semi-algebraic.  

\subsection{Triangulations} $\label{triang-sec}$

Triangulations of semi-algebraic sets are discussed in \cite[Chapter 9]{BCR}, and also in Hironaka's article \cite{Hironaka}.  We need some terminology regarding simplicial complexes.

A finite \emph{affine simplicial complex} $K \subset \bbR^M$ consists of a finite union of affine simplices, 
in which each pair of simplices intersect either trivially or in a common face.  Since affine simplices are semi-algebraic, so are finite affine simplicial complexes.

We refer to the interior of an affine simplex in $\bbR^N$ as an \emph{open affine simplex}.  By convention, the interior of a zero-dimensional simplex $\langle v \rangle$ is simply $\langle v \rangle$.  Note that open affine simplices are semi-algebraic, as can be seen from the equations defining them, or from the fact that interiors of semi-algebraic sets are always semi-algebraic.  Hence any union of open simplices inside a simplicial complex is semi-algebraic.

\begin{definition}$\label{open}$
We define a finite \emph{affine open simplicial complex} to be a union of open affine simplices inside some finite affine simplicial complex.  
The dimension of such a complex $W$ is the maximum dimension of a simplex in $W$.

A finite \emph{open triangulation} of a space $X$ is a homeomorphism $f\co W\srt{\homeo} X$ with $W$ a finite affine open simplicial complex.  We call such a triangulation \emph{semi-algebraic} if $f$ is a semi-algebraic map.  The images, under $f$, of open simplices in $W$ will be called open simplices of $X$.
\end{definition}

The open simplices of a finite affine open simplicial complex $W \subset \bbR^N$ are partially ordered by containment of their closures.  If $W\srt{\homeo} X$ is an open triangulation,   there is an induced partial order on the open simplices of $X$.  The \emph{maximal} open simplices are then  open subsets of $X$.
The following result, proven in~\cite[Theorem 9.2.1]{BCR} and in~\cite{Hironaka}, shows that compact semi-algebraic sets can be triangulated in a manner compatible with any finite family of semi-algebraic subsets.

\begin{theorem} $\label{triang-thm}$ If $X \subset \bbR^N$ is a compact semi-algebraic subset, and $Y_1, \ldots, Y_k \subset X$ are semi-algebraic subsets of $X$, then there exists a semi-algebraic triangulation of $X$ in which each $Y_i$ is a union of open simplices.
\end{theorem}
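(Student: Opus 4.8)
{BCR}}]
The plan is to induct on the ambient dimension $N$. When $N=0$ there is nothing to prove, and since $X$ is compact we may assume throughout that all the sets in question are bounded. For the inductive step I would first choose coordinates well: by the good-direction lemma (\cite[Chapter 2]{BCR}) there is a linear change of variables on $\bbR^N$ after which the projection $\pi\co \bbR^N = \bbR^{N-1}\cross\bbR \to \bbR^{N-1}$ is in ``good position'' relative to the finite family $\{X, Y_1, \ldots, Y_k\}$; that is, there is a cylindrical algebraic decomposition of $\bbR^N$ adapted to this family whose cells are arranged cylindrically over a cylindrical algebraic decomposition of $\bbR^{N-1}$. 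Concretely, over each base cell $D$ the cells of $\bbR^N$ lying above $D$ are the graphs of finitely many continuous semi-algebraic functions $\xi_1^D < \cdots < \xi^D_{\ell(D)}$ on $D$ together with the bands between consecutive graphs, each $\xi_j^D$ extending continuously to the closure $\ol{D}$, and each of $X, Y_1, \ldots, Y_k$ is a union of cells. Since $X$ is bounded, the part of $X$ over a base cell $D$ is a union of some of the graphs $\xi_j^D$ and some of the bounded bands between consecutive ones.

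Next I would triangulate the base and lift. The set $\ol{\pi(X)}\subset\bbR^{N-1}$ is compact and semi-algebraic, so by the inductive hypothesis it admits a semi-algebraic triangulation $g\co |L| \srm{\homeo} \ol{\pi(X)}$ compatible with the base cells meeting $\pi(X)$ and with all the ``discriminant'' semi-algebraic sets where the number or ordering of the sheets $\xi_j^D$ changes, so that the cylindrical structure over $\ol{\pi(X)}$ is respected by $L$. One then builds a triangulation of $X$ by an inductive coning procedure over the skeleta of $L$: processing the simplices $\sigma$ of $L$ in order of increasing dimension, suppose a semi-algebraic triangulation of the part of $X$ lying over $g(\partial\sigma)$ has already been constructed, refining the restrictions of $X$ and each $Y_i$. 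Let $b$ be the barycenter of $\sigma$; the fiber of $X$ over $g(b)$ is a finite set of points, coming from the graphs through $g(b)$ together with one interior point chosen in each band of $X$ over $g(b)$ (chosen so as to record, for each $i$, whether that band lies in $Y_i$). One then extends the triangulation over $g(\bar\sigma)$ by coning the triangulation of $X$ over $g(\partial\sigma)$ from these finitely many lifted barycenter points. Since each $Y_i$ is a union of cells of the cylindrical decomposition, this coning produces a triangulation in which each $Y_i$ is a union of open simplices; assembling over all $\sigma\in L$ gives a finite simplicial complex $K$ with a semi-algebraic homeomorphism $|K|\srm{\homeo}X$ of the required kind.

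The main obstacle is justifying the coning step: one must know that the part of $X$ lying over the \emph{closed} simplex $g(\bar\sigma)$ is semi-algebraically homeomorphic — rel its restriction over $g(\partial\sigma)$ — to the join of that restriction with the finite fiber over $g(b)$, so that coning yields a homeomorphism rather than merely a continuous surjection. This is where ``good position'' and the freedom to subdivide $L$ further are essential: the cylindrical structure, including the continuous extensions $\xi_j^D|_{\ol{D}}$ and the sets along which sheets merge, must be tame enough over each closed simplex of $L$. The cleanest way to package this is via the local conical structure of semi-algebraic sets (a consequence of Hardt's semi-algebraic triviality theorem) applied relative to the simplices of $L$: after subdividing $L$ so that $g$ restricted to each closed simplex falls inside a single trivializing piece of each relevant semi-algebraic map, the coning step reduces to this conical-structure statement. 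The remaining bookkeeping — semi-algebraicity of the maps produced (via Theorems \ref{closure} and the closure of semi-algebraic maps under composition), and the fact that the local triangulations glue along shared faces because they are all obtained by coning from barycenters — is routine; full details appear in \cite[Theorem 9.2.1]{BCR} and in \cite{Hironaka}.
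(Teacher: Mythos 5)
The paper does not actually prove Theorem \ref{triang-thm}: it is imported as a known result, with the proof attributed to \cite[Theorem 9.2.1]{BCR} and to \cite{Hironaka}. So there is no internal argument to compare yours against; what you have written is an outline of the proof from the cited literature, and as an outline it is broadly accurate: induction on the ambient dimension, a linear change of coordinates putting a cylindrical algebraic decomposition adapted to $X, Y_1, \ldots, Y_k$ in good position, a triangulation of the compact base supplied by the inductive hypothesis and compatible with the projected cells, and a lift of that triangulation to the cylinder using the sections $\xi_j^D$.

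Two cautions. First, the step you yourself flag as the main obstacle --- that the part of $X$ over a closed simplex is, relative to its part over the boundary, the join with the finite fiber over a lifted barycenter --- is precisely the substance of the theorem, and your sketch defers it rather than establishes it; in \cite{BCR} the lifting is carried out directly from the good-position properties of the decomposition (the sections extend continuously to closures and closures of cells are unions of cells), by an explicit compatible subdivision of the graphs and bands over each simplex, not by appeal to a general conic-structure principle. Second, your proposed packaging of that step via Hardt's semialgebraic triviality and the local conic structure of semialgebraic sets is circular within the very development you cite: in \cite[Chapter 9]{BCR} Hardt's theorem and local conic structure are proved \emph{after}, and by means of, the triangulation theorem. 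Hardt's original argument is independent of triangulation, so the circularity is avoidable, but as written the device you use to close the gap rests on a result downstream of the one being proved, and this would need to be addressed before the sketch could be called a proof.
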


Semi-algebraic mappings are tame, in the sense that they do not increase dimension (loosely speaking, this means that space filling maps cannot be semi-algebraic).  To make this precise, we need to review the notion of dimension in real algebraic geometry.  This material can be found in \cite[Chapter 2]{BCR}.

Recall that the (Krull) dimension of a ring $R$ is the length of the longest chain of prime ideals in $R$, where a chain $P_0 \psubset P_1 \ldots \psubset P_n$ has length $n$.

\begin{definition}
The (algebraic) \emph{dimension} of a semi-algebraic subset $X\subset \bbR^N$ is the dimension of the coordinate ring $\bbR[t_1, \ldots, t_N]/\mathcal{I} (X)$, where $\I (X)$ denotes the ideal of polynomials vanishing on $X$.
\end{definition}  

Here is the relevant result about algebraic dimension (see \cite[Theorem 2.8.8]{BCR}).

\begin{theorem} $\label{alg-tame}$ If $f\co X\to Y$ is a surjective semi-algebraic mapping between semi-algebraic sets, then $\dim(Y) \leqs \dim(X)$.
\end{theorem}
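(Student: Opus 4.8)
The plan is to reduce the statement to two facts about algebraic dimension — that it is monotone under inclusion of semi-algebraic sets, and invariant under semi-algebraic homeomorphism — and then to realize $Y$ as a semi-algebraic subset of $X$. Monotonicity is immediate from the definition: if $A\subseteq X$ is semi-algebraic then $\I(X)\subseteq\I(A)$, so $\bbR[t_1,\dots,t_N]/\I(A)$ is a quotient ring of $\bbR[t_1,\dots,t_N]/\I(X)$ and hence admits no longer chain of prime ideals; thus $\dim(A)\leqs\dim(X)$. By the Tarski--Seidenberg theorem $Y=f(X)$ is semi-algebraic, so $\dim(Y)$ is defined. To embed $Y$ into $X$ I would invoke semi-algebraic choice: a surjective semi-algebraic map admits a semi-algebraic section $s\co Y\to X$ with $f\circ s=\id_Y$ (pick a point of each fiber $f^{-1}(y)$ in a semi-algebraic fashion). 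Then $s$ is injective, its image $s(Y)\subseteq X$ is semi-algebraic, and $s$ is a semi-algebraic homeomorphism $Y\to s(Y)$ with inverse $f|_{s(Y)}$. Granting homeomorphism-invariance of $\dim$, this gives $\dim(Y)=\dim(s(Y))\leqs\dim(X)$.

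For the homeomorphism-invariance of $\dim$ I would appeal to cell decomposition: every semi-algebraic set $A$ is a finite disjoint union of semi-algebraic cells, each semi-algebraically homeomorphic to an open cube $(0,1)^d$, and $\dim(A)$ is the largest such $d$. This uses that the coordinate ring of $(0,1)^d$ has Krull dimension $d$ — its Zariski closure is a $d$-dimensional affine subspace — together with the additivity $\dim(A\cup B)=\max(\dim A,\dim B)$, which holds because a prime ideal containing $\I(A\cup B)=\I(A)\cap\I(B)$ must contain $\I(A)$ or $\I(B)$. It follows that $\dim(A)$ equals the largest $d$ for which some open subset of $A$ is semi-algebraically homeomorphic to $(0,1)^d$ (the top-dimensional cells furnish such open subsets, and invariance of domain — applied to the fact, from cell decomposition, that a semi-algebraic set homeomorphic to $(0,1)^d$ has a top cell open in it — rules out values of $d$ exceeding $\dim(A)$). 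Since this description is visibly preserved in both directions by a semi-algebraic homeomorphism $A\to B$, we get $\dim(A)=\dim(B)$, and the first paragraph finishes the argument.

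The technical heart — and the step I expect to be the main obstacle — is the cell decomposition theorem together with the computation that a cell $\homeo(0,1)^d$ has algebraic dimension exactly $d$: this is precisely the bridge between the purely algebraic definition of dimension via the Krull dimension of the coordinate ring and its topological meaning. It is developed in \cite[Chapter 2]{BCR}, which I would simply cite; everything above it is formal. (One can bypass the semi-algebraic section by instead decomposing $X$ into cells adapted to $f$, so that $f$ restricts to a $C^1$ semi-algebraic map on each cell $C_i$, writing $Y=\bigcup_i f(C_i)$, and bounding $\dim f(C_i)\leqs\dim C_i$ via the constant-rank theorem and the fact that a semi-algebraic $C^1$-submanifold of dimension $r$ has algebraic dimension $r$; but this rests on the same algebra--topology dictionary.)
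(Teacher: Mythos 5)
First, note that the paper does not actually prove this statement: it is quoted verbatim from \cite[Theorem 2.8.8]{BCR}, so there is no internal argument to compare yours against. Judged on its own terms, your proposal has a genuine gap at the sentence ``$s$ is a semi-algebraic homeomorphism $Y\to s(Y)$.'' Semi-algebraic choice does furnish a semi-algebraic section $s$ of a surjective semi-algebraic map, but not a \emph{continuous} one, and in general no continuous section exists: for the surjective polynomial map $f\co S^1\to S^1$, $z\goesto z^2$, any section is a branch of the square root and must be discontinuous. So $s$ is only a semi-algebraic bijection onto its image, whereas the invariance of $\dim$ you establish in the second paragraph is genuinely topological (open subsets homeomorphic to cubes, invariance of domain) and applies only to homeomorphisms. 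What your reduction actually requires is invariance of dimension under semi-algebraic \emph{bijections} --- which is precisely the second assertion of \cite[Theorem 2.8.8]{BCR} itself, so the reduction comes close to assuming the theorem.

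The repair is to abandon the section and argue through the graph, which is how the cited source proceeds: $\dim \Gr(f)=\dim(X)$ because the coordinate projection $\Gr(f)\to X$ is injective, $Y=f(X)$ is the image of $\Gr(f)$ under the other coordinate projection, and one proves directly from the cylindrical cell decomposition that a coordinate projection does not raise the dimension of a semi-algebraic set (and preserves it when injective on that set). Your closing parenthetical --- decompose $X$ into $C^1$ cells adapted to $f$, stratify further by rank, and bound $\dim f(C_i)$ using the rank theorem, \cite[Theorem 2.8.14]{BCR}, and additivity of $\dim$ over finite unions --- is essentially this argument in analytic form and should be promoted from an aside to the proof. (A smaller quibble with the second paragraph: a top-dimensional cell of a decomposition of $A$ need not be open in $A$, since lower cells may accumulate on it, so your characterization of $\dim(A)$ needs the additional fact that $\dim(\overline{C}\setminus C)<\dim(C)$ for a cell $C$; but that paragraph only supports the step that fails anyway.)
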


As discussed above, open affine simplices are semi-algebraic, and an open affine simplex with $d+1$ vertices has (algebraic) dimension precisely $d$.  This follows from the more general fact \cite[Theorem 2.8.14]{BCR} that semi-algebraic sets in $\bbR^M$ which are smooth $d$--dimensional submanifolds always have (algebraic) dimension $d$.  

\begin{lemma} $\label{dimn}$ If $f\co W \srt{\homeo} X\subset \bbR^N$ is a semi-algebraic open triangulation of a semi-algebraic set, then $\dim (X)$ equals the maximum dimension of a simplex in $W$.
\end{lemma}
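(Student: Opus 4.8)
The plan is to prove the two inequalities $\dim(X)\le d$ and $d\le\dim(X)$, where $d$ is the maximum dimension of a simplex of $W$; throughout I use that, by the remarks preceding the lemma, an open affine simplex with $j+1$ vertices is a semi-algebraic set of algebraic dimension exactly $j$, so for the simplices of $W$ the geometric and algebraic notions of dimension coincide.

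For the upper bound, write $W=\sigma_1\cup\cdots\cup\sigma_r$ as the finite union of its open affine simplices, so each $\sigma_i$ is semi-algebraic with $\dim(\sigma_i)\le d$, and $X=f(W)=f(\sigma_1)\cup\cdots\cup f(\sigma_r)$ because $f$ is onto. Each restriction $f|_{\sigma_i}$ is semi-algebraic --- it is the composite of the (semi-algebraic) inclusion $\sigma_i\injects W$ with $f$ --- so $f(\sigma_i)$ is semi-algebraic by the Tarski--Seidenberg Theorem, and Theorem \ref{alg-tame} gives $\dim f(\sigma_i)\le\dim(\sigma_i)\le d$. Since the algebraic dimension of a finite union of semi-algebraic sets is the maximum of the dimensions of the pieces (as $\I(A\cup B)=\I(A)\cap\I(B)$, and a prime ideal containing $\sqrt{\I(A)}\cap\sqrt{\I(B)}$ contains one of the two radicals; see \cite[Chapter 2]{BCR}), we conclude $\dim(X)=\max_i\dim f(\sigma_i)\le d$.

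For the reverse inequality, fix a simplex $\sigma$ of $W$ with $\dim\sigma=d$, so $\dim(\sigma)=d$ in the algebraic sense. Since $f$ is a semi-algebraic homeomorphism, its inverse $f^{-1}$ is again semi-algebraic (the graph of $f^{-1}$ is obtained from that of $f$ by swapping the two blocks of coordinates), hence $f^{-1}|_{f(\sigma)}\co f(\sigma)\to\sigma$ is a surjective semi-algebraic map, and Theorem \ref{alg-tame} gives $d=\dim(\sigma)\le\dim f(\sigma)$. As $f(\sigma)\subseteq X$ and algebraic dimension is monotone under inclusion of semi-algebraic sets (the containment $\I(X)\subseteq\I(f(\sigma))$ induces a surjection of coordinate rings, which cannot increase Krull dimension), we get $d\le\dim f(\sigma)\le\dim(X)$, which together with the previous paragraph completes the proof.

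I do not foresee a genuine difficulty here: once Theorem \ref{alg-tame} is available the argument is essentially bookkeeping. The only points requiring attention are verifying that the images $f(\sigma_i)$ and $f(\sigma)$ remain semi-algebraic --- which is exactly Tarski--Seidenberg applied to restrictions of the semi-algebraic maps $f$ and $f^{-1}$ --- and invoking the two standard properties of the algebraic dimension function used above, namely that the dimension of a finite union is the maximum of the dimensions and that dimension is monotone under inclusion, both immediate from the coordinate-ring definition.
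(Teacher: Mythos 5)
Your proof is correct and follows essentially the same route as the paper's: decompose $X$ into the images $f(\sigma_i)$ of the open simplices, use that $\dim f(\sigma_i)=\dim \sigma_i$ because $f$ and $f^{-1}$ restrict to surjective semi-algebraic maps (Theorem \ref{alg-tame}), and invoke the fact that the dimension of a finite union of semi-algebraic sets is the maximum of the dimensions. The paper compresses all of this into a one-line appeal to \cite[Theorem 2.8.5]{BCR}; your version merely spells out the same steps.
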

\begin{proof}If $A = \bigcup_{i=1}^p A_i$ with each $A_i$  semi-algebraic, then $\dim (A) = \max_i \dim(A_i)$ (\cite[Theorem 2.8.5]{BCR}).  Each open simplex $\sigma \subset W$ is semi-algebraic, and the triangulation $f$ is semi-algebraic, so each $f(\sigma)$ is semi-algebraic  and the result follows.
\end{proof}

Using Lemma \ref{dimn}, we can now restate Theorem \ref{alg-tame} in terms of triangulations.

\begin{corollary}$\label{tame}$ If $f\co X \to Y$ is a surjective semi-algebraic map between semi-algebraic sets, and $X$ admits a semi-algebraic open triangulation of dimension $k$, then every semi-algebraic open triangulation of $Y$ has dimension at most $k$.
\end{corollary}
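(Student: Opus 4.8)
The plan is to deduce this immediately from Theorem \ref{alg-tame} together with Lemma \ref{dimn}. The point of Lemma \ref{dimn} is precisely that the dimension of a semi-algebraic open triangulation — defined combinatorially as the maximal dimension of a simplex appearing in it — coincides with the algebraic dimension of the underlying space, and is therefore independent of the chosen triangulation. Granting this, the corollary is just a repackaging: first I would apply Lemma \ref{dimn} to the given $k$-dimensional semi-algebraic open triangulation $W \srt{\homeo} X$, obtaining $\dim(X) = k$ in the sense of algebraic dimension. Since $f \co X \to Y$ is surjective and semi-algebraic, Theorem \ref{alg-tame} then yields $\dim(Y) \leqs \dim(X) = k$. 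Finally, given \emph{any} semi-algebraic open triangulation $W' \srt{\homeo} Y$, Lemma \ref{dimn} applied to $Y$ identifies the dimension of $W'$ with $\dim(Y) \leqs k$, which is the assertion.

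The only point requiring a moment's care is to check that the hypotheses of Lemma \ref{dimn} are met in each invocation: $X$ and $Y$ are semi-algebraic by assumption, and both triangulations under consideration are semi-algebraic by hypothesis, so the lemma applies verbatim. There is no genuine obstacle here — all the substance has already been absorbed into Theorem \ref{alg-tame} and Lemma \ref{dimn} — and this corollary simply records the consequence in the form most convenient to cite later, namely that a single semi-algebraic open triangulation of the source of a surjective semi-algebraic map bounds the dimension of every semi-algebraic open triangulation of the target.
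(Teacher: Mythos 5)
Your proof is correct and is exactly the argument the paper intends: Lemma \ref{dimn} identifies the dimension of any semi-algebraic open triangulation with the algebraic dimension of the underlying set, and Theorem \ref{alg-tame} transfers the bound from $X$ to $Y$ along the surjective semi-algebraic map. The paper presents the corollary as an immediate restatement of these two facts, so there is nothing to add.
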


This result will be applied to induction maps  in Section \ref{proofs}.

\subsection{Quotients of semi-algebraic sets by linear actions of compact Lie groups}  In this section, we discuss a result of Schwarz \cite{Schwarz-smooth} (see also Procesi--Schwarz \cite{PS-conf}) regarding quotients of semi-algebraic sets.
Let $K$ be a compact Lie group, and let $\rho\co K \to \GL_N (\bbR)$ be a representation.   

\begin{theorem}[Schwarz] $\label{Schwarz}$  If $X\subset \bbR^N$ is a $K$--invariant semi-algebraic subset, then the quotient $X/K$ is homeomorphic to a semi-algebraic set, and under this homeomorphism, the map $X\to X/K$ becomes semi-algebraic.
\end{theorem}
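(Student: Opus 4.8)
The plan is to reduce to the case of a linear action on all of $\bbR^N$, and then invoke classical invariant theory for compact Lie groups together with the Tarski--Seidenberg machinery from Theorem \ref{closure}. First I would recall that by a theorem of Hilbert (extended to compact Lie groups by Hilbert--Weyl/Nagata), the ring of invariant polynomials $\bbR[t_1, \ldots, t_N]^K$ is finitely generated; choose generators $p_1, \ldots, p_M$ and assemble them into a polynomial map $p = (p_1, \ldots, p_M) \co \bbR^N \to \bbR^M$. The image $p(\bbR^N)$ is semi-algebraic by the Tarski--Seidenberg Theorem (Theorem \ref{closure}), since $p$ is a polynomial map hence semi-algebraic on the semi-algebraic set $\bbR^N$. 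The key classical fact, due to Schwarz (and underlying the statement we are asked to prove), is that $p$ separates $K$--orbits: two points of $\bbR^N$ have the same image under $p$ if and only if they lie in the same $K$--orbit. This uses compactness of $K$ in an essential way — one averages a separating polynomial over the group to produce an invariant one. Consequently $p$ induces a continuous bijection $\bbR^N/K \to p(\bbR^N)$, and since $K$ is compact the quotient map $\bbR^N \to \bbR^N/K$ is closed and proper, so this bijection is a homeomorphism onto its (semi-algebraic) image.

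Next I would pass from $\bbR^N$ to the invariant semi-algebraic subset $X$. Since $X$ is $K$--invariant, $X/K$ is naturally a subspace of $\bbR^N/K$, and under the homeomorphism above it corresponds to $p(X) \subseteq p(\bbR^N)$. But $p(X)$ is semi-algebraic: $p$ restricted to the semi-algebraic set $X$ is semi-algebraic, so its image is semi-algebraic by Tarski--Seidenberg. Thus $X/K \homeo p(X)$ is homeomorphic to a semi-algebraic set. Moreover, under this identification the orbit map $X \to X/K$ becomes $p|_X \co X \to p(X)$, which is a polynomial map restricted to a semi-algebraic set, hence semi-algebraic by the remarks following the definition of semi-algebraic functions. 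This gives exactly the two assertions of the theorem.

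The main obstacle — and the genuinely nontrivial input — is the orbit-separation property of the invariants $p_1, \ldots, p_M$, i.e. that the polynomial map $p$ descends to an \emph{injection} on $\bbR^N/K$. This is precisely the content of Schwarz's work \cite{Schwarz-smooth} (compare Procesi--Schwarz \cite{PS-conf}); its proof rests on compactness of $K$ (to average), on the finite generation of the invariant ring, and on the fact that distinct closed orbits of a reductive/compact group action can be separated by invariant functions. All the remaining steps — that images and preimages of semi-algebraic sets under semi-algebraic maps are semi-algebraic, that a continuous bijection from a compact-type quotient is a homeomorphism, and that restrictions of polynomial maps are semi-algebraic — are routine applications of Theorem \ref{closure} and the closure properties recalled earlier in this section.
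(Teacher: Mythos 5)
Your proposal follows essentially the same route as the paper: finite generation of the invariant ring, orbit separation of the generating invariants via averaging over $K$, Tarski--Seidenberg to see that $p(X)$ is semi-algebraic, and the identification of the orbit map with $p|_X$. The one thin spot is your claim that closedness/properness of the quotient map $\bbR^N \to \bbR^N/K$ makes the induced bijection $\bbR^N/K \to p(\bbR^N)$ a homeomorphism (that inference is not valid as stated; one needs properness of $p$ itself, e.g.\ because an invariant norm-square is among the invariants), but the paper handles this exactly as you otherwise do --- by restricting to the compact case, where a continuous bijection onto a Hausdorff image is automatically a homeomorphism, and deferring the general case to Schwarz.
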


\begin{corollary} $\label{bHom-sa}$
For any finitely generated group $\Gamma$, the moduli space of representations 
$\bHom_n (\Gamma) = \Hom(\Gamma, U(n))/U(n)$
has the structure of a semi-algebraic set, with a semi-algebraic quotient map
$$\Hom (\Gamma, U(n)) \maps \bHom_n (\Gamma).$$
\end{corollary}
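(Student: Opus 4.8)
The plan is to deduce the corollary directly from Schwarz's theorem (Theorem~\ref{Schwarz}) by exhibiting $\Hom(\Gamma, U(n))$ as a $U(n)$--invariant semi-algebraic subset of some $\bbR^N$ on which $U(n)$ acts through a real linear representation. First I would realize $U(n)$ as a real algebraic subgroup of $\GL_N(\bbR)$: identify $M_n(\bbC)$ with $\bbR^{2n^2}$, so that an $n\times n$ complex matrix becomes a real $2n\times 2n$ matrix, and $U(n)$ sits inside $\GL_{2n}(\bbR)$ as the compact Lie group cut out by the (polynomial) unitarity equations $A^* A = I$. As noted in the discussion following the definition of semi-algebraic sets, $\Hom(\Gamma, U(n))$ is then a real algebraic variety inside $U(n)^{\,\abs{S}} \subset (\bbR^{2n^2})^{\abs S} = \bbR^N$ for any finite generating set $S$ of $\Gamma$ (finiteness of the defining relations being unnecessary by the Hilbert Basis Theorem, again as already remarked).

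Next I would pin down the relevant linear action. The conjugation action of $U(n)$ on $\Hom(\Gamma, U(n))$ is, coordinate-by-coordinate, the action $(g, A) \mapsto gAg^{-1}$ on $M_n(\bbC) \homeo \bbR^{2n^2}$; for fixed $g \in U(n)$ this map is $\bbR$--linear in $A$. Hence the diagonal conjugation action on $(\bbR^{2n^2})^{\abs S} = \bbR^N$ is the $\abs S$--fold direct sum of this linear representation, so it is itself a representation $\rho\co U(n) \to \GL_N(\bbR)$. The subset $\Hom(\Gamma, U(n)) \subset \bbR^N$ is invariant under $\rho$ since conjugation preserves both the unitarity equations and the relations of $\Gamma$. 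Now Theorem~\ref{Schwarz}, applied with $K = U(n)$ and $X = \Hom(\Gamma, U(n))$, gives a homeomorphism of the orbit space $\bHom_n(\Gamma) = \Hom(\Gamma, U(n))/U(n)$ with a semi-algebraic set such that the quotient map $\Hom(\Gamma, U(n)) \to \bHom_n(\Gamma)$ becomes semi-algebraic; this is exactly the assertion of the corollary.

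There is essentially no hard obstacle here — the content is entirely in Schwarz's theorem, which we are free to invoke. The only points requiring a little care are bookkeeping: checking that the conjugation action on $M_n(\bbC)$, after the standard real-ification $\bbC \rightsquigarrow \bbR^2$, is genuinely $\bbR$--linear (it is, since left and right multiplication by a fixed matrix are $\bbC$--linear, hence $\bbR$--linear) and that taking the $\abs S$--fold diagonal copy still yields a single linear representation into one big $\GL_N(\bbR)$ (a finite direct sum of representations is a representation). I would also note in passing that the homeomorphism type of $\bHom_n(\Gamma)$ — and hence the conclusion — does not depend on the choice of generating set $S$, though the corollary as stated only asserts the existence of \emph{some} semi-algebraic structure, so this remark is not strictly needed. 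In short, the proof is: real-ify $U(n)$ and $M_n(\bbC)$, observe that diagonal conjugation is linear and that $\Hom(\Gamma,U(n))$ is an invariant real variety, and quote Theorem~\ref{Schwarz}.
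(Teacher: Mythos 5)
Your proof is correct and follows essentially the same route as the paper: embed $\Hom(\Gamma,U(n))$ as a compact, $U(n)$--invariant real algebraic subset of $(M_n(\bbC))^{\abs S}$ realified to $\bbR^N$, observe that diagonal conjugation is a linear representation of $U(n)$ on $\bbR^N$, and invoke Theorem~\ref{Schwarz}. The extra care you take in verifying $\bbR$--linearity of conjugation after realification is exactly the point the paper leaves implicit, so nothing further is needed.
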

\begin{proof} If $\Gamma$ is generated by $k$ elements, then  
$\Hom(\Gamma, U(n)) \subset U(n)^k \subset (M_{2n} (\bbR))^k,$
is a compact algebraic subset, and the conjugation action of $U(n)$ extends to a linear action on $(M_{2n} (\bbR))^k$.  Hence Theorem \ref{Schwarz} applies.
\end{proof}

We  sketch the proof of Theorem~\ref{Schwarz}.  Most of the ideas are found in Schwarz~\cite{Schwarz-smooth}, or in Weyl~\cite[Chapter 8, \S 14]{Weyl}.  One key ingredient is the following  well-known lemma (see, for example, \cite{Weyl} or Sepanski \cite[Exercise 7.35, p. 186]{Sepanski}).

\begin{lemma} $\label{avg}$ Let $p\in \bbR[x_1, \ldots, x_N]$ be a polynomial.  Then the function
$$\bar{p} (x_1, \ldots, x_N) = \int_K p(k\cdot (x_1, \ldots, x_N)) dk$$
is a $K$--invariant polynomial function.  Here $dk$ denotes the (right-invariant) Haar measure on $K$, with the total measure of $K$ normalized to one.
\end{lemma}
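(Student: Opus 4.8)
The plan is to verify the two assertions --- that $\bar p$ is $K$--invariant and that it is again a polynomial --- essentially independently. Invariance is a formal consequence of the invariance of Haar measure, while polynomiality is where the hypothesis that $\rho$ acts \emph{linearly} enters.

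First I would check that $\bar p$ is well-defined and $K$--invariant. For fixed $x$ the integrand $k\mapsto p(\rho(k)x)$ is continuous on the compact group $K$, hence bounded and integrable, so $\bar p(x)$ makes sense. For invariance, fix $g\in K$; since $\rho(k)\rho(g)=\rho(kg)$ we have
$$\bar p(\rho(g)x)=\int_K p\bigl(\rho(kg)x\bigr)\,dk,$$
and the substitution $k\mapsto kg^{-1}$, combined with right-invariance of $dk$, turns the right-hand side into $\int_K p(\rho(k)x)\,dk=\bar p(x)$.

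Next, polynomiality. Since $\rho(k)$ acts through a matrix, the $i$-th coordinate of $\rho(k)x$ is the linear form $\sum_j \rho(k)_{ij}x_j$, whose coefficients $\rho(k)_{ij}$ are continuous (in fact real-analytic) functions of $k$. Substituting these linear forms for the variables of $p$ and expanding, I can write
$$p(\rho(k)x)=\sum_{|\alpha|\leqs \deg p} c_\alpha(k)\,x^\alpha,$$
a \emph{finite} sum in which the index set $\{\alpha: |\alpha|\leqs \deg p\}$ does not depend on $k$ (a linear substitution cannot raise the degree) and each coefficient $c_\alpha(k)$ is a polynomial in the matrix entries $\rho(k)_{ij}$, hence continuous on $K$. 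Integrating term by term then gives
$$\bar p(x)=\sum_{|\alpha|\leqs \deg p}\Bigl(\int_K c_\alpha(k)\,dk\Bigr)x^\alpha,$$
a polynomial in $x_1,\dots,x_N$ of degree at most $\deg p$, with constant coefficients $\int_K c_\alpha(k)\,dk$.

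I do not expect a genuine obstacle here; the one step deserving a word is the interchange of $\int_K$ with the summation over $\alpha$, which is legitimate precisely because that sum is finite and each $c_\alpha$ is continuous --- hence integrable --- on the compact group $K$. Combining the two parts proves the lemma.
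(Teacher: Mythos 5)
Your proof is correct and follows the same route as the paper: invariance is read off from the (right-)invariance of Haar measure via the substitution $k\mapsto kg^{-1}$, and polynomiality comes from expanding $p(\rho(k)x)$ in monomials $x^\alpha$ whose coefficients are polynomials in the matrix entries of $\rho(k)$, then integrating these coefficients over $K$. Your version simply spells out the (finite) term-by-term integration that the paper leaves implicit.
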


\begin{proof} Invariance of $\bar{p}$ is immediate from invariance of Haar measure.  One sees that $\bar{p}$ is a polynomial by writing out the action of $k\in K$ as multiplication by the matrix $\rho(k) = [k_{ij}] \in GL_N (\bbR)$.  The coefficients of the polynomial $\bar{p}$ are then integrals, over $K$, of various polynomials in the entries $k_{ij}$.
\end{proof}

Continuing, let $\mcP = \bbR[x_1, \ldots, x_n]$, and let $\mcP^K$ denote the subalgebra of $\mcP$ consisting of $K$--invariant polynomials.  First, we check that $\mcP^K$ is finitely generated as an algebra over $\bbR$.  By the Hilbert Basis Theorem, the ideal $I$ generated by the non-constant elements of $\mcP^K$ is generated by a finite number of polynomials of the form $J_i = \sum p_{ij} q_{ij}$ with $q_{ij}\in \mcP^K$ and $q_{ij}$ non-constant.  The collection $\{q_{ij}\}_{i,j}\subset \mcP^K$ is now a finite, invariant generating set for the ideal $I$.  

We claim that $\{q_{ij}\}_{i,j}$ generates $\mcP^K$ as an algebra.  If $q\in \mcP^K$ is a non-constant invariant, then we can write $q = \sum_{i,j} a_{ij} q_{ij}$ for some polynomials $a_{ij}$, and now
$$q = \bar{q} = \sum_{i,j} \overline{a_{ij}} q_{ij}.$$
By Lemma \ref{avg}, $\overline{a_{ij}} \in \mcP^K$, and by induction on dimension, we can assume the $\overline{a_{ij}}$ lie in the algebra generated by the $q_{ij}$.   This completes the proof of finite generation.

To analyze the quotient $X/K$, let $p_1, \ldots, p_M$ be a finite generating set for the algebra $\mcP^K$, and consider the polynomial mapping $p = (p_1, \ldots, p_m)\co \bbR^N \to \bbR^M$.  The image of $X$ under this mapping is a semi-algebraic subset of $\bbR^M$, and we claim that the induced map $\pi\co \bbR^N/K\to \bbR^M$ is a homeomorphism onto its image.  To show that $\pi$ is injective, it suffices to check that for any two distinct orbits $K\cdot x$ and $K\cdot y$ there exists $p\in \mcP^K$ with $p(x) \neq p(y)$.  This follows by applying Urysohn's Lemma, the Weierstrass approximation theorem, and Lemma \ref{avg}.

When $X \subset \bbR^N$ is compact, it follows immediately that the injection $\pi\co X/K\to \bbR^M$ is a homeomorphism onto its image.  Schwarz \cite{Schwarz-smooth} shows that the full map $\bbR^N/K \to \bbR^M$ is a homeomorphism onto its image; we only need the compact case.

 %%%%%%%%%%%%%%%%%%%%%%%%%%%%%%%%%%%%%%%%%%%%%%%%%%%
%%%%%%%%%%%%%%%%%%%%%%%%%%%%%%%%%%%%%%%%%%%%%%%%%%%

\section{Crystallographic groups} $\label{crystal}$

An (abstract) crystallographic group $\Gamma$ is a group admitting an embedding into the group of isometries of Euclidean space $\bbR^k$, such that the image of $\Gamma$ in $\Isom(\bbR^k)$ is discrete and $\bbR^k/\Gamma$ is compact.  We will call the resulting action of $\Gamma$ on $\bbR^k$ a \emph{crystallographic action} of dimension $k$.  As explained below, all crystallographic actions of a fixed $\Gamma$ have the same dimension, which we will refer to as the dimension of $\Gamma$.
Standard references for the theory of crystallographic groups are Ratcliffe \cite[\S7.4]{Ratcliffe} and Wolf \cite[Chapter 3]{Wolf}.
Surprisingly, there are only finitely many crystallographic groups in each dimension (this was part of Hilbert's $18^{\textrm{th}}$ problem, solved by Bieberbach).  We note that every discrete subgroup of $\Isom(\bbR^k)$ acts crystallographically on some affine subspace of $\bbR^k$ \cite[Theorem 5.4.6]{Ratcliffe} and hence surjects onto a crystallographic group, but this action may have a non-trivial kernel.

There is an isomorphism of topological groups $\bbR^k \rtimes O(k) \srt{\homeo} \Isom (\bbR^k)$, where the semi-direct product has the product topology \cite[\S1.3 and Theorem 5.2.4]{Ratcliffe}.  Here $\bbR^k$ and $O(k)$ act on $\bbR^k$ via their natural actions (by translation and matrix multiplication, respectively), so any $(a, A) \in \bbR^k \cross O(k)$ acts on $\bbR^k$ via the isometry
$$v\goesto a + Av.$$
The semi-direct product decomposition gives us a map $\pi\co \Isom(\bbR^k) \maps O(k)$, whose kernel is precisely the subgroup of translations.  Each subgroup $\Gamma < \Isom(\bbR^k)$ then has a corresponding map to $O(k)$ whose kernel consists of all $\gamma\in \Gamma$ which act on $\bbR^k$ by translation.  We call this normal subgroup $A$ the \emph{translation subgroup} of $\Gamma$, and we call $\Gamma/A$ the \emph{point group} of $\Gamma$.  Note that $A$ is always abelian.

If $\Gamma$ has a crystallographic action of dimension $k$, then its translation subgroup $A\normal \Gamma$ is in fact a free abelian group of rank $k$, and $[\Gamma : A] < \infty$ \cite[Theorem 7.4.2]{Ratcliffe}, making $\Gamma$ virtually $\bbZ^k$.
Moreover, $\Gamma$ sits in an extension 
\begin{equation} \label{extn} 1\maps A\maps \Gamma \maps Q \maps 1\end{equation}
with $A\isom \bbZ^k$ and $Q$ finite.  The following result, also part of \cite[Theorem 7.4.2]{Ratcliffe}, is helpful in recognizing crystallographic actions.

\begin{proposition}$\label{full-rk}$ If $\Gamma < \Isom (\bbR^k)$ is discrete and its translation subgroup has rank $k$, then $\Gamma$ is crystallographic; that is, $\bbR^k/\Gamma$ is compact.
\end{proposition}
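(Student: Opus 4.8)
The plan is to show that the translation subgroup $A \normal \Gamma$ has finite index in $\Gamma$, and then to identify $\bbR^k/\Gamma$ as the quotient of the compact torus $\bbR^k/A$ by the finite group $\Gamma/A$.

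First I would record the structure of $A$. By definition $A = \ker\bigl(\pi|_\Gamma \co \Gamma \to O(k)\bigr)$, and under the identification $\Isom(\bbR^k) \homeo \bbR^k \rtimes O(k)$ this is exactly $\Gamma \cap \bbR^k$, where $\bbR^k$ is embedded in $\Isom(\bbR^k)$ as the closed subgroup of translations. Hence $A$ is a discrete subgroup of $\bbR^k$, so it is free abelian of rank at most $k$; by hypothesis $\rk A = k$, so $A$ is a lattice, it spans $\bbR^k$ over $\bbR$, and $\bbR^k/A$ is a compact $k$--torus. Then fix a $\bbZ$--basis $a_1, \dots, a_k$ of $A$, which is then also an $\bbR$--basis of $\bbR^k$.

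The key step is to prove $[\Gamma : A] = |\pi(\Gamma)| < \infty$. Since $A$ is normal in $\Gamma$, conjugation gives an action of $\Gamma$ on $A$: writing $\gamma = (b, B) \in \bbR^k \rtimes O(k)$ and $\tau_v$ for translation by $v$, a one-line computation gives $\gamma \tau_v \gamma^{-1} = \tau_{Bv}$, so $\gamma$ acts on $A \subset \bbR^k$ through the orthogonal map $B = \pi(\gamma)$, which consequently carries $A$ into $A$. Thus $\pi(\Gamma) \subseteq O(k)' \defn \{B \in O(k) : BA \subseteq A\}$, and it suffices to show $O(k)'$ is finite. Now $O(k)' = \{B \in O(k) : Ba_i \in A \text{ for all } i\}$, which is closed in $O(k)$ (each condition $Ba_i \in A$ is closed, as $A$ is closed in $\bbR^k$), hence compact; and $O(k)'$ is discrete, because any $B \in O(k)'$ is determined by the values $Ba_1, \dots, Ba_k$, which lie in the discrete set $A$. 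A compact discrete space is finite, so $O(k)'$ — and hence $\pi(\Gamma) \cong \Gamma/A$ — is finite.

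Finally I would assemble the conclusion: the $\Gamma$--action on $\bbR^k$ descends to an action of the finite group $\Gamma/A$ on the compact torus $\bbR^k/A$ whose orbit space is $\bbR^k/\Gamma$, so $\bbR^k/\Gamma$ is compact and $\Gamma$ is crystallographic. The only substantive point is the finiteness of $O(k)'$ — the fact that an orthogonal transformation carrying a full-rank lattice into itself has only finitely many possibilities — which I expect to be the main (if mild) obstacle; the remainder is formal bookkeeping with the decomposition $\Isom(\bbR^k) \homeo \bbR^k \rtimes O(k)$ and with quotient topologies.
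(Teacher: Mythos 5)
Your proof is correct. The paper itself gives no argument for this proposition --- it simply cites Ratcliffe [Theorem 7.4.2] --- and what you have written is essentially the standard proof found there: the full-rank translation lattice $A$ gives a compact torus $\bbR^k/A$, the point group $\pi(\Gamma)$ is finite because it lands in the compact, discrete (hence finite) group of orthogonal transformations preserving the lattice, and $\bbR^k/\Gamma$ is then the quotient of the compact torus by the finite group $\Gamma/A$. The one fact you invoke without proof --- that a discrete subgroup of $\bbR^k$ is generated by $\bbR$--linearly independent vectors, so that rank $k$ forces $A$ to span --- is standard and is exactly the input the cited reference supplies.
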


In any extension of the form (\ref{extn}), conjugation in $\Gamma$ induces an action of $Q$ on $A$.  When $\Gamma$ is crystallographic, this action is faithful.  Indeed, if $\gamma \in \Gamma$ acts trivially on $A$, then the subgroup $A'$ generated by $A$ and $\gamma$ is an abelian, discrete subgroup of $\Isom(\bbR^k)$, and Ratcliffe's analysis of such subgroups \cite[Theorem 5.4.4]{Ratcliffe} shows that $A'$ acts by translations on all of $\bbR^k$.  Hence $\gamma \in A$.  Note that this also shows that $A$ is a maximal abelian subgroup of $\Gamma$.
(By \cite[Theorem 7.4.5]{Ratcliffe},  every group $\Gamma$ sitting in an extension $\bbZ^k \injects \Gamma \surjects Q$, with $Q$ a finite group acting faithfully on $A$, actually admits a $k$--dimensional crystallographic action.)

We will work with abstract groups, rather than with groups  with a chosen crystallographic action, so it is helpful to know that the translation subgroup and the point group can be defined without reference to such an action.  For completeness, we give a proof.  A similar result is proven in Wolf~\cite[Theorem 3.2.9]{Wolf}. 

\begin{lemma} $\label{translations}$ Let $\Gamma$ denote an abstract crystallographic group.  Then there is exactly one subgroup of $A\leqs \Gamma$ which has finite index and is maximal abelian in $\Gamma$.  This subgroup is a finite rank, free abelian group, and is normal in $\Gamma$.  Moreover, in any crystallographic action of $\Gamma$, $A$ is mapped onto the subgroup of translations.
\end{lemma}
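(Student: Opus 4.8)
The plan is to reduce every assertion to properties of the translation subgroup of a single fixed crystallographic action, exploiting the fact that "having finite index" and "being maximal abelian" are intrinsic to the abstract group $\Gamma$.

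First I would fix a crystallographic action of $\Gamma$ on $\bbR^k$ and let $A_0 \normal \Gamma$ be its translation subgroup. By \cite[Theorem 7.4.2]{Ratcliffe}, $A_0$ is free abelian of rank $k$ and has finite index in $\Gamma$; it is normal because the translations form a normal subgroup of $\Isom(\bbR^k)$; and it is maximal abelian by the discussion preceding the lemma (any abelian subgroup containing $A_0$ would consist of elements centralizing $A_0$, hence lying in $A_0$ by faithfulness of the point-group action). This already gives existence, together with every property of $A$ asserted in the lemma except uniqueness.

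The crux is a centralizer computation: for \emph{any} finite-index subgroup $B \leqs A_0$ one has $C_\Gamma(B) = A_0$. The inclusion $A_0 \leqs C_\Gamma(B)$ holds since $A_0$ is abelian. Conversely, if $\gamma \in C_\Gamma(B)$ then its image $\bar\gamma$ in the point group $Q = \Gamma/A_0$ acts trivially by conjugation on $B$; since $A_0$ is torsion-free and $[A_0:B] < \infty$, the subgroup $B$ spans $A_0 \otimes_\bbZ \bbQ$ over $\bbQ$, so $\bar\gamma$ acts trivially on $A_0 \otimes \bbQ$ and hence on $A_0$, and faithfulness of the $Q$-action on $A_0$ forces $\gamma \in A_0$. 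Granting this, uniqueness is immediate: if $A \leqs \Gamma$ is any finite-index maximal abelian subgroup, then $B := A \cap A_0$ has finite index in $A_0$, and $A$ centralizes $B$, so $A \leqs C_\Gamma(B) = A_0$; maximality of $A$ and abelianness of $A_0$ then give $A = A_0$.

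Since the resulting subgroup $A$ is now canonical — it does not depend on the chosen action — in any crystallographic action of $\Gamma$ the translation subgroup, being finite-index and maximal abelian, must coincide with $A$; hence $A$ is carried onto the translations in every crystallographic action, and its rank $k$ (computed in the existence step) is independent of the action as well. The only step that requires an idea is the centralizer lemma; the remainder is formal bookkeeping, so I do not anticipate a serious obstacle.
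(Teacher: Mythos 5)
Your proof is correct, but the uniqueness step runs along a genuinely different track from the paper's. The paper shows directly that \emph{every} finite-index abelian subgroup $A'<\Gamma$ lies in the translation subgroup $A_0$ by a geometric argument: it first notes that $A'\cap A_0$ has finite index in both, so $A'$ has rank $k$, and then invokes Ratcliffe's classification of discrete abelian subgroups of $\Isom(\bbR^k)$ (Theorem 5.4.4) a second time to conclude that a rank-$k$ discrete abelian subgroup must act by translations, whence $A'\leqs A_0$. You instead prove the purely algebraic centralizer identity $C_\Gamma(B)=A_0$ for any finite-index $B\leqs A_0$, using that $B$ spans $A_0\otimes\bbQ$ together with faithfulness of the point-group action on $A_0$ (which the paper establishes just before the lemma), and then deduce $A'\leqs C_\Gamma(A'\cap A_0)=A_0$. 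Both arguments are sound and both pass through the observation that $A'\cap A_0$ has finite index in $A_0$; the difference is what they buy. The paper's route stays close to the geometry and reuses the same structural theorem it already cited. Your route isolates the point that, once faithfulness of $Q\curvearrowright A_0$ is known, uniqueness of the finite-index maximal abelian subgroup is a formal consequence of the abstract extension $\bbZ^k\to\Gamma\to Q$ and needs no further appeal to the isometric action; this makes the intrinsic (action-independent) nature of $A$ more transparent and would apply verbatim to any virtually-$\bbZ^k$ group whose point group acts faithfully on the lattice. The final step, identifying $A$ with the translations in an arbitrary crystallographic action via canonicity, is the same in both treatments.
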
  
\begin{proof} Say $\Gamma$ has a crystallographic action on $\bbR^k$, with $A$ the corresponding subgroup of translations.  Then we have seen that $A$ is a normal, finite index subgroup isomorphic to $\bbZ^k$, and that $A$ is maximal abelian in $\Gamma$.

To complete the proof, we will show that the translation subgroup $A < \Gamma$ contains all finite index abelian subgroups $A' < \Gamma$.  Any such subgroup $A'$ is finitely generated, since it contains the free abelian group $A\cap A'$ as a finite index subgroup.  
In general, if $A_1$ and $A_2$ are finite index, finitely generated abelian subgroups of a group $G$, then the finitely generated abelian group $A_1\cap A_2$ has finite index in both $A_1$ and $A_2$, so $\rk (A_1) = \rk (A_1\cap A_2) = \rk (A_2)$. 
Hence $A'$ has  rank $k$. 
It follows from Ratcliffe's discussion of discrete abelian subgroups of $\Isom (\bbR^k)$ \cite[Theorem 5.4.4]{Ratcliffe} that $A'$ must in fact be free abelian, and must act by translations on $\bbR^k$.
Thus $A'$ lies inside the translation subgroup $A$. 
\end{proof}

 %%%%%%%%%%%%%%%%%%%%%%%%%%%%%%%%%%%%%%%%%%%%%%%%%%%
%%%%%%%%%%%%%%%%%%%%%%%%%%%%%%%%%%%%%%%%%%%%%%%%%%%

 \section{Spaces of representations}$\label{rep-spaces}$

Let $\Gamma$ be a finitely generated discrete group. 
In this section we introduce basic terminology and facts regarding spaces of unitary representations of $\Gamma$.  We denote the set of homomorphisms $\rho\co \Gamma\to U(n)$ by $\Hom(\Gamma, U(n))$.  This space is naturally topologized using the product topology on $U(n)^\Gamma$ (which is the same as the compact-open topology on $\Map(\Gamma, U(n)) \homeo U(n)^\Gamma$).  One may check that $\Hom(\Gamma, U(n))$ is closed in $U(n)^\Gamma$, hence compact.  
A generating set $S\subset \Gamma$ determines an injection from
$\Hom(\Gamma, U(n))$ into a product of $|S|$ copies of $U(n)$, and by compactness this map is always a homeomorphism onto its image.  If our generating set contains $m < \infty$ elements, then the corresponding embedding gives $\Hom(\Gamma, U(n))$ the structure of a real algebraic variety, cut out from the real algebraic variety $U(n)^m$ by the relations in $\Gamma$.  For our purposes, it will not be necessary to consider the relationship between the algebraic structures induced by different generating sets.  By convention, we set $U(0) = \{1\}$ so that $\Hom(\Gamma, U(0))$ is the one-point space.

The block sum maps $U(n) \cross U(m) \to U(n+m)$, which we denote $(A, B) \goesto A\oplus B$, determine corresponding block sum maps on representation spaces, which we again denote $(\rho, \psi) \goesto \rho\oplus \psi$.  By convention, block sum with the zero-dimensional matrix or representation is the identity map.  The action of the unitary group on itself by conjugation induces an action of $U(n)$ on $\Hom(\Gamma, U(n))$.

\begin{definition} The \emph{moduli space of} $U(n)$--\emph{representations} is the quotient space 
$$\bHom_n (\Gamma) \mathrel{\mathop :}=\Hom(\Gamma, U(n))/U(n).$$  

We say that $\rho \in \Hom(\Gamma, U(n))$ is reducible if $\rho\isom \phi\oplus \psi$ for some positive-dimensional representations $\phi, \psi$; otherwise we say that $\rho$ is irreducible.
Let 
$$ \Sum(\Gamma, U(n)) \subset \Hom(\Gamma, U(n)) \textrm{ and } \Irr (\Gamma, U(n)) \subset \Hom(\Gamma, U(n))$$ 
denote the subspaces of reducible and of irreducible representations (respectively).  The \emph{moduli space of irreducible} $U(n)$--\emph{representations} is the quotient space 
$$\bIrr_n (\Gamma) \mathrel{\mathop :}= \Irr(\Gamma, U(n))/U(n).$$  
Similarly, we denote $\Sum(\Gamma, U(n))/U(n)$ by $\bSum_n (\Gamma)$.
\end{definition}

By Corollary \ref{bHom-sa}, we can view $\bHom_n (\Gamma)$ as a semi-algebraic subset of Euclidean space, in such a way that the quotient map $\Hom(\Gamma, U(n)) \to \bHom_n (\Gamma)$ is semi-algebraic.  We will take this viewpoint from now on.

\begin{lemma}$\label{loc-cpt}$ For any discrete group $\Gamma$, $\bSum_n (\Gamma) \subset \bHom_n (\Gamma)$ and $\Sum (\Gamma, U(n)) \subset \Hom(\Gamma, U(n))$ are closed and semi-algebraic.  
Hence $\Irr(\Gamma, U(n)) \subset \Hom(\Gamma, U(n))$ and 
$\bIrr_n (\Gamma) \subset \bHom_n (\Gamma)$ are open, semi-algebraic subsets.
\end{lemma}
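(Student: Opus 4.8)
The plan is to show that $\Sum(\Gamma, U(n))$ is closed and semi-algebraic, after which the statements about $\bSum_n(\Gamma)$, $\Irr(\Gamma,U(n))$, and $\bIrr_n(\Gamma)$ follow formally: the quotient map $q\co \Hom(\Gamma,U(n))\to\bHom_n(\Gamma)$ is semi-algebraic (Corollary \ref{bHom-sa}), $\bSum_n(\Gamma) = q(\Sum(\Gamma,U(n)))$ is semi-algebraic by Tarski--Seidenberg (Theorem \ref{closure}), and it is closed because it is the image of a compact set under a continuous map into a Hausdorff space. Complementation (Theorem \ref{closure}) then gives that $\Irr(\Gamma,U(n)) = \Hom(\Gamma,U(n))\setminus\Sum(\Gamma,U(n))$ and $\bIrr_n(\Gamma) = \bHom_n(\Gamma)\setminus\bSum_n(\Gamma)$ are open and semi-algebraic.

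So the content is to analyze $\Sum(\Gamma,U(n))$. First I would record the elementary fact that $\rho\in\Hom(\Gamma,U(n))$ is reducible if and only if it preserves a proper nonzero subspace $V\subset\bbC^n$: one direction is the definition, and the other uses unitarity to take the orthogonal complement of an invariant subspace as a complementary invariant subspace. Next, for a fixed $\rho$, having an invariant subspace of dimension $d$ (with $0<d<n$) is equivalent to the existence of an orthogonal projection $P$ of rank $d$ commuting with every $\rho(s)$, $s$ in a finite generating set $S$ of $\Gamma$. The set of such pairs $(\rho, P)$ inside $\Hom(\Gamma,U(n))\times M_n(\bbC)$ is cut out by the polynomial equations $P = P^*$, $P^2 = P$, $\operatorname{tr}(P) = d$, and $\rho(s)P = P\rho(s)$ for $s\in S$ — all real-polynomial conditions in the matrix entries — hence is a real algebraic (in particular semi-algebraic) subset of a Euclidean space. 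Projecting away the $P$-coordinate, the Tarski--Seidenberg theorem shows that $\{\rho : \rho \text{ has a } d\text{-dimensional invariant subspace}\}$ is semi-algebraic, and $\Sum(\Gamma,U(n))$ is the finite union of these sets over $d=1,\dots,n-1$, hence semi-algebraic.

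For closedness, I would argue directly rather than through the projection: the set $Z_d\subset\Hom(\Gamma,U(n))\times M_n(\bbC)$ of pairs $(\rho,P)$ as above is closed (it is defined by equations among continuous functions) and bounded (the $P$'s lie in a compact set of projections, the $\rho$'s in a compact representation space), hence compact, so its image under the continuous projection to $\Hom(\Gamma,U(n))$ is closed; $\Sum(\Gamma,U(n))$ is the finite union of these closed images. The main obstacle — really the only place requiring care — is the bookkeeping in the first direction: verifying that one may work with a \emph{finite} generating set (legitimate since $\Gamma$ is finitely generated, as assumed throughout Section \ref{rep-spaces}), that reducibility is detected by commuting orthogonal projections rather than merely by invariant subspaces, and that all the conditions translate into honest polynomial equations over $\bbR$ after writing complex matrices in terms of real and imaginary parts. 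None of this is deep, but it is the step where the semi-algebraic structure is actually exhibited.
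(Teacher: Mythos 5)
Your proof is correct, but it takes a genuinely different route from the paper's. The paper works in the quotient first: it writes $\bSum_n(\Gamma)$ as the finite union, over $k$, of the images of the composites
$\Hom(\Gamma,U(k))\cross\Hom(\Gamma,U(n-k))\to\Hom(\Gamma,U(n))\to\bHom_n(\Gamma)$
(block sum followed by the semi-algebraic quotient map), so that $\bSum_n(\Gamma)$ is semi-algebraic by Tarski--Seidenberg and closed because the domains are compact; it then recovers $\Sum(\Gamma,U(n))$ as the \emph{preimage} of $\bSum_n(\Gamma)$ under the quotient map, which preserves both properties. You instead work upstairs: you characterize reducibility by the existence of a rank-$d$ orthogonal projection commuting with the generators, observe that the incidence set of pairs $(\rho,P)$ is a compact real algebraic set, and project away $P$. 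Both arguments hinge on the same unitary fact — an invariant subspace has an invariant orthogonal complement, so "preserves a proper subspace," "commutes with a nontrivial orthogonal projection," and "is conjugate to a block sum" all coincide — and both are complete. What your version buys is that the closedness and semi-algebraicity of $\Sum(\Gamma,U(n))$ are established directly on the representation variety, without routing through Schwarz's theorem on the quotient (which you still need, as does the paper, for the statements about $\bSum_n(\Gamma)$ and $\bIrr_n(\Gamma)$); it is essentially an explicit elimination-of-quantifiers argument. What the paper's version buys is brevity and the quotient-level description of $\bSum_n(\Gamma)$ as a union of images of block-sum maps, which is the form in which the set is used later. Your handling of the remaining points (Hausdorffness of $\bHom_n(\Gamma)$, reduction to a finite generating set, passing to complements) matches the paper's and is fine.
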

\begin{proof}  
To show that $\Sum(\Gamma, U(n)) \subset \Hom(\Gamma, U(n))$ is semi-algebraic, it will suffice to show that its image in $\bHom_n (\Gamma)$ is semi-algebraic, because the inverse image of a semi-algebraic subset under a semi-algebraic map remains semi-algebraic.  The set of reducibles in $\bHom_n(\Gamma)$ is the union, over $k$, of the images of the block sum maps
\begin{equation}\label{bsm}
\Hom(\Gamma, U(k)) \cross \Hom(\Gamma, U(n-k)) \maps \Hom(\Gamma, U(n)) \maps \bHom_n (\Gamma).
\end{equation}
Since both the block sum map and the quotient map are semi-algebraic, so is their composite (see Section \ref{real}).  We have now written $\bSum_n (\Gamma)$ as the union of a finite collection of semi-algebraic subsets, and hence $\bSum_n(\Gamma)$ is itself semi-algebraic.

To see that $\Sum(\Gamma, U(n)) \subset \Hom(\Gamma, U(n))$ is closed, note that since the domains of the block-sum maps (\ref{bsm}) are compact, so is the union of their images.  Hence $\bSum_n (\Gamma)$ is closed, and so is its inverse image $\Sum(\Gamma, U(n))$.  
The final statement follows from the fact that complements of semi-algebraic sets are semi-algebraic.
\end{proof}

Lawson's articles \cite{Lawson-prod-form} and \cite{Lawson-simul} use different notations for 
$\bHom_n (\Gamma) /  \bSum_n(\Gamma),$
which contains the moduli space of irreducibles $\bIrr_n (\Gamma)$ as  the complement of the basepoint.
The following observation motivates the  notation $\bIrrp_n (\Gamma)$ for this space.

\begin{lemma}$\label{one-pt2}$ The space $\bHom_n (\Gamma)$ is compact Hausdorff, and
\begin{equation*}
\begin{split}
\bIrrp_n (\Gamma)  \homeo \big(\Hom(\Gamma, U(n))/\Sum(\Gamma, U(n))\big)/U(n)
\end{split}
\end{equation*}
is the one-point compactification of the moduli space of irreducible representations.
\end{lemma}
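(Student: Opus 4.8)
The plan is to derive all three assertions of the lemma from the semi-algebraic description of $\bHom_n(\Gamma)$ in Corollary~\ref{bHom-sa}, the closedness of $\bSum_n(\Gamma)$ from Lemma~\ref{loc-cpt}, and two standard point-set facts. First I would record that $\bHom_n(\Gamma)$ is compact Hausdorff: it is the continuous image of the compact space $\Hom(\Gamma,U(n))$ under the quotient map, hence compact, and it is Hausdorff because by Corollary~\ref{bHom-sa} it is homeomorphic to a subset of Euclidean space (alternatively, because the orbit space of a compact Hausdorff space under a continuous action of a compact group is Hausdorff). Combined with Lemma~\ref{loc-cpt}, this shows $\bIrr_n(\Gamma) = \bHom_n(\Gamma)\setminus \bSum_n(\Gamma)$ is an open subset of a compact Hausdorff space, hence locally compact Hausdorff, so its one-point compactification makes sense.

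Next I would identify the two iterated quotients. Since $\Sum(\Gamma,U(n))$ is a $U(n)$-invariant subspace of $\Hom(\Gamma,U(n))$, the conjugation action descends to $\Hom(\Gamma,U(n))/\Sum(\Gamma,U(n))$, and collapsing an invariant subspace commutes with passage to the orbit space: both $\big(\Hom(\Gamma,U(n))/\Sum(\Gamma,U(n))\big)/U(n)$ and $\bHom_n(\Gamma)/\bSum_n(\Gamma)$ carry the quotient topology from $\Hom(\Gamma,U(n))$ for the same equivalence relation — namely, the one generated by the $U(n)$-orbits together with ``all of $\Sum(\Gamma,U(n))$ is one point.'' Hence the natural continuous bijection
$$\big(\Hom(\Gamma,U(n))/\Sum(\Gamma,U(n))\big)/U(n) \maps \bHom_n(\Gamma)/\bSum_n(\Gamma)$$
is a homeomorphism (this is automatic from the universal property of the quotient topology, and in any case follows from compactness of $\Hom(\Gamma,U(n))$ and the Hausdorffness established above). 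This reduces the lemma to showing that $\bHom_n(\Gamma)/\bSum_n(\Gamma)$ is the one-point compactification of $\bIrr_n(\Gamma)$.

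That last step is the classical fact that if $X$ is compact Hausdorff and $C\subset X$ is closed with complement $Y=X\setminus C$, then the map $X/C \to Y^+$ sending the collapsed point to $\infty$ and fixing $Y$ is a continuous bijection between compact Hausdorff spaces, hence a homeomorphism; applied with $X = \bHom_n(\Gamma)$, $C = \bSum_n(\Gamma)$, $Y = \bIrr_n(\Gamma)$, this finishes the proof. (When $\Sum(\Gamma,U(n)) = \emptyset$ — for instance when $n=1$, where every representation is irreducible — one reads $\Hom(\Gamma,U(1))/\emptyset$ as $\Hom(\Gamma,U(1))\sqcup\{*\}$, which is again the one-point compactification of the compact space $\bHom_1(\Gamma)$, so the statement persists.) I do not expect a genuine obstacle here; the only point requiring any care is the commutation of the two quotient operations in the middle step, and that is forced by the universal property of quotient topologies once one checks that the $U(n)$-action really does preserve $\Sum(\Gamma,U(n))$, which it does because block sum is compatible with conjugation.
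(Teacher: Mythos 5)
Your proof is correct and follows essentially the same route as the paper: the paper likewise observes that $\bHom_n(\Gamma)$ is a compact semi-algebraic (hence compact Hausdorff) set, that $\bSum_n(\Gamma)$ is closed by Lemma~\ref{loc-cpt}, and then invokes the standard fact that a compact Hausdorff space is the one-point compactification of the complement of any point (Munkres, Theorem~29.1). The only difference is that you spell out the commutation of the two quotient operations, which the paper treats as implicit; that extra care is harmless and arguably an improvement.
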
 
\begin{proof}  
The space $\bIrrp_n (\Gamma)$ is a compact Hausdorff space, because it is the quotient of the compact semi-algebraic set $\bHom_n (\Gamma)$ by the closed subspace $\bSum_n (\Gamma)$. 
The moduli space $\bIrr_n (\Gamma)$ embeds in $\bIrrp_n (\Gamma)$ as the complement of the basepoint, and every compact Hausdorff space $X$ is the one-point compactification of $X - \{x\}$ for each $x\in X$ (see, for example, Munkres~\cite[Theorem 29.1]{Munkres}).
\end{proof}

\begin{remark} The fact that $\bIrrp_n (\Gamma)$ is Hausdorff can be proven using basic facts about quotients by compact groups (Munkres~\cite[\S31, Exercises 6,7,8]{Munkres}).  All spaces encountered in this paper are Hausdorff, as can be shown using similar methods.
\end{remark}

We note a simple fact which is extremely helpful in computations.

\begin{proposition}$\label{max-dim}$ If $\Gamma$ contains an abelian subgroup $A$ of index $m <  \infty$, then all irreducible unitary representations of $\Gamma$ have dimension at most $m$.\end{proposition}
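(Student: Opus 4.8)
The plan is to use the standard representation-theoretic fact that restriction of an irreducible to a finite-index subgroup decomposes with bounded multiplicities, together with the fact that irreducible representations of an abelian group are one-dimensional. First I would fix an irreducible unitary representation $\rho\co \Gamma \to U(n)$ and restrict it to the abelian subgroup $A$. Since $A$ is abelian, $\rho|_A$ decomposes as a direct sum of characters $\chi_1, \ldots, \chi_n$ (with multiplicity), because a commuting family of unitary matrices is simultaneously diagonalizable. Group $\Gamma$ acts on the set of characters of $A$ by $\gamma \cdot \chi = \chi(\gamma^{-1}(-)\gamma)$, and I would let $\chi$ be one of the characters appearing in $\rho|_A$, with isotypic component $V_\chi \subseteq \mathbb{C}^n$.

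The key step is to observe that the sum $W = \sum_{\gamma \in \Gamma} \rho(\gamma) V_\chi$ is a $\Gamma$-invariant subspace of $\mathbb{C}^n$: applying $\rho(\gamma_0)$ permutes the summands since $\rho(\gamma_0) V_\chi = V_{\gamma_0 \cdot \chi}$ (one checks that $\rho(a)$ acts on $\rho(\gamma_0)V_\chi$ by the scalar $(\gamma_0\cdot\chi)(a)$). Because $\rho$ is irreducible and $W \neq 0$, we must have $W = \mathbb{C}^n$. The orbit of $\chi$ under $\Gamma$ factors through the finite quotient $\Gamma/A$ (since $A$ is central in... actually $A$ need not be central, but $A$ acts trivially on characters of $A$ by conjugation precisely when... more carefully: the conjugation action of $\Gamma$ on $\widehat{A}$ has $A$ in its kernel because $A$ is abelian, so the action factors through $\Gamma/A$, which has order $m$). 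Hence the orbit of $\chi$ has at most $m$ elements, so $W$ is a sum of at most $m$ spaces $V_{\gamma\cdot\chi}$.

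To conclude, I need that each isotypic component $V_{\gamma\cdot\chi}$ is one-dimensional. This follows because each $V_{\gamma\cdot\chi} = \rho(\gamma)V_\chi$ has the same dimension $d := \dim V_\chi$, and the $d$-fold diagonal copy of $\chi$ inside $V_\chi$ would, if $d>1$, allow one to build a proper $\Gamma$-invariant subspace: namely, pick a one-dimensional subspace $L \subseteq V_\chi$ and form $\sum_\gamma \rho(\gamma)L$, which is again $\Gamma$-invariant, nonzero, and has dimension at most $m < n$ (if $n > m$) — contradicting irreducibility. So if $n > m$ we reach a contradiction, forcing $n \leqslant m$. Actually the cleanest phrasing: $\mathbb{C}^n = \sum_\gamma \rho(\gamma)L$ has dimension at most $|\Gamma/A| = m$, so $n \leqslant m$.

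\begin{proof}
Let $\rho\co \Gamma \to U(n)$ be irreducible. Since $A$ is abelian, the commuting unitary operators $\{\rho(a) : a\in A\}$ are simultaneously diagonalizable, so $\bbC^n$ decomposes as an orthogonal direct sum of $A$-eigenspaces (isotypic components) $V_\chi$, where $\chi$ ranges over the characters of $A$ appearing in $\rho|_A$. Conjugation by $\Gamma$ gives an action of $\Gamma$ on the character group $\widehat{A}$; since $A$ is abelian it acts trivially, so this action factors through $\Gamma/A$, a group of order $m$. For $\gamma_0\in\Gamma$ and $a\in A$ one computes, for $v\in V_\chi$, that $\rho(a)\rho(\gamma_0)v = \rho(\gamma_0)\rho(\gamma_0^{-1}a\gamma_0)v = (\gamma_0\cdot\chi)(a)\,\rho(\gamma_0)v$, so $\rho(\gamma_0)V_\chi = V_{\gamma_0\cdot\chi}$.

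Fix a character $\chi$ occurring in $\rho|_A$ and choose any one-dimensional subspace $L\subseteq V_\chi$. The subspace $W = \sum_{\gamma\in\Gamma}\rho(\gamma)L \subseteq \bbC^n$ is nonzero and is invariant under $\rho(\Gamma)$. Since $\rho(\gamma)L$ depends only on the coset $\gamma A$ (as $\rho(a)L = L$ for $a\in A$), $W$ is a sum of at most $m$ lines, so $\dim W \leqs m$. But $\rho$ is irreducible and $W\neq 0$, so $W = \bbC^n$, giving $n = \dim W \leqs m$.
\end{proof}
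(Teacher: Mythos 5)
Your proof is correct and follows essentially the same route as the paper's: restrict to $A$, use simultaneous diagonalizability to find a one-dimensional $A$-invariant line, and observe that its translates by the $m$ coset representatives span an invariant (hence full) subspace, forcing $n \leqs m$. The Clifford-theoretic detour about orbits of characters in your planning paragraph is unnecessary for the final argument, but the proof as written is clean and complete.
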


This is proven in Serre \cite[\S3.1]{Serre}.  The key point is that simultaneously commuting diagonalizable (e.g. unitary) matrices are simultaneously diagonalizable, so irreducible representations of $A$ are 1--dimensional (this fact will be used several times).  The result follows by restricting an irreducible representation $\rho\co \Gamma\to U(n)$ to  $A$, and noting that if $S$ is a set of coset representatives for $\Gamma/A$ and $W\subset \bbC^n$ is an irreducible summand of $\rho|_A$, then the translates $sW$, $s\in S$, generate $\bbC^n$.

%%%%%%%%%%%%%%%%%%%%%%%%%%%%%%%%%%%%%%%%%%%%%%%%%%%
%%%%%%%%%%%%%%%%%%%%%%%%%%%%%%%%%%%%%%%%%%%%%%%%%%%

\section{Induced Representations and Projective Representations}$\label{ind-proj}$

As discussed in the introduction, we are considering groups $\Gamma$ with a finite index subgroup $H\isom \bbZ^k$.  The normal core $A = \bigcap_\gamma \gamma H\gamma^{-1}$ is a \emph{normal} finite index subgroup, and $A\isom \bbZ^k$.  So such groups are finite extensions of $\bbZ^k$.

The overall structure of our arguments is based on the following result from representation theory (see Serre~\cite[Proposition 24]{Serre}).  

\begin{theorem} $\label{Serre8.1}$ Let $A$ be an abelian normal subgroup of finite index in a discrete group $\Gamma$.  For every irreducible representation $\rho \co \Gamma \to U(n)$, either
\begin{itemize}
\item $\rho$ is isomorphic to $\Ind_H^\Gamma (\rho')$ for some proper subgroup $H<\Gamma$ containing $A$ and some irreducible unitary representation $\rho'$ of $H$, or 

\item the restriction of $\rho$ to $A$ is scalar (a direct sum of isomorphic $1$--dimensional representations).
\end{itemize}
\end{theorem}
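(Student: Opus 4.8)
The plan is to follow Serre's argument, analyzing how $\Gamma$ permutes the characters of $A$ occurring in $\rho|_A$. First I would use that $A$ is abelian: commuting unitary matrices are simultaneously diagonalizable, so every irreducible summand of $\rho|_A$ is one-dimensional and $\bbC^n$ decomposes as an orthogonal direct sum $\bbC^n = \bigoplus_{\chi\in X} V_\chi$ of isotypic components, indexed by the finite set $X$ of characters $\chi\co A\to U(1)$ appearing in $\rho|_A$. Conjugation in $\Gamma$ induces a (left) action on the character group of $A$ by $(\gamma\cdot\chi)(a) = \chi(\gamma^{-1}a\gamma)$, and a one-line computation gives $\rho(\gamma)V_\chi = V_{\gamma\cdot\chi}$; in particular $\Gamma$ permutes $X$. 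Since the sum of the $V_\chi$ over any $\Gamma$-orbit in $X$ is a nonzero subrepresentation of $\bbC^n$, irreducibility of $\rho$ forces $\Gamma$ to act transitively on $X$.

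Next I would split into two cases according to $|X|$. If $|X|=1$, then $\rho|_A$ is $\chi$-isotypic for a single character $\chi$, i.e. $\rho(a) = \chi(a)\,\Id$ for all $a\in A$, which is the second alternative. If $|X|>1$, fix $\chi\in X$ and let $H = \{\gamma\in\Gamma : \gamma\cdot\chi = \chi\}$. Since $A$ is abelian it fixes every character of $A$, so $A\leqs H$; and since $\Gamma$ acts transitively on $X$, orbit-stabilizer gives $[\Gamma:H] = |X| > 1$, finite, so $H$ is a proper subgroup of $\Gamma$ containing $A$ and $\Ind_H^\Gamma$ is defined. The subspace $V_\chi$ is $H$-invariant by the identity above; let $\rho'$ be the resulting unitary representation of $H$ on $V_\chi$.

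Finally I would show $\rho\isom\Ind_H^\Gamma(\rho')$. The natural $\Gamma$-equivariant map $\Ind_H^\Gamma(\rho')\to\bbC^n$ sending a representative $g\otimes v$ to $\rho(g)v$ is well defined because $V_\chi$ is $H$-stable, and its image contains every $\rho(g)V_\chi = V_{g\cdot\chi}$, so transitivity of the action on $X$ makes it surjective. Comparing dimensions, $\dim\Ind_H^\Gamma(\rho') = [\Gamma:H]\dim V_\chi = |X|\dim V_\chi = \dim\bbC^n$ (the last step because the $V_{g\cdot\chi}$ are carried isomorphically onto one another by $\rho$), so the map is an isomorphism. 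Irreducibility of $\rho'$ is then automatic: a proper nonzero $H$-invariant $W\psubset V_\chi$ would produce $\bigoplus_{gH\in\Gamma/H}\rho(g)W$, a proper nonzero $\Gamma$-invariant subspace of $\bbC^n$, contradicting irreducibility of $\rho$. The one point needing care is keeping a single convention for $\Ind_H^\Gamma$ through the well-definedness and dimension arguments; the rest is bookkeeping with the isotypic decomposition, and the dichotomy on $|X|$ is exactly the dichotomy in the statement.
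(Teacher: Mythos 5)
Your proof is correct and is exactly the argument the paper points to: it cites Serre's Proposition 24 and notes that the proof proceeds by decomposing $\rho|_A$ into ($1$--dimensional, hence isotypic) components, taking $H$ to be the stabilizer of one isotypic component, and observing that the dichotomy between one orbit element and several is the dichotomy in the statement. Your write-up fills in the same details (transitivity from irreducibility, the dimension count for the induced representation, irreducibility of $\rho'$) that the paper leaves to Serre.
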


This result is proven by letting $H$ be the stabilizer of an isotypic component of $\rho$.  Serre's book focuses on finite groups, but the proof extends without change to (unitary) representations of infinite discrete groups, using the fact (Proposition \ref{max-dim}) that all irreducible unitary representations of $A$ are 1--dimensional, along with Schur's Lemma, which implies that \emph{unitary} representations of infinite discrete groups admit unique decompositions into irreducible summands

Theorem~\ref{Serre8.1} shows that if $\Gamma$ sits in an extension
$$1\maps A \maps \Gamma \maps Q\maps 1$$
with $A$ abelian and $Q$ finite, then
every representation $\rho\co \Gamma\to U(n)$ is either 
\begin{itemize}
\item induced from a some $H<\Gamma$ with $A\leqs H$, or else 
\item produces a homomorphism $\rho\co Q = \Gamma/A \to \PU(n)$, 
\end{itemize}
where $\PU(n)$, the projective unitary group, denotes the quotient of $U(n)$ by the subgroup of scalar matrices $\lambda I$, $\lambda\in S^1$.  (From now on, we will denote the scalar subgroup simply by $S^1 \subset U(n)$.)  In Case (1), the group $H$ sits in an extension
$$1 \maps A \maps H \maps H/A \maps 1,$$
so $H$ still satisfies the hypotheses of our theorems, and this will allow for an induction argument based on the order of the quotient group $Q$.
We will analyze these two classes of representations (induced and projective) separately in the following sections, and then combine our results to prove the main theorems.

%%%%%%%%%%%%%%%%%%%%%%%%%%%%%%%%%%%%%%%%%%%%%%%%%%%
%%%%%%%%%%%%%%%%%%%%%%%%%%%%%%%%%%%%%%%%%%%%%%%%%%%

\section{Induction}$\label{ind}$

In this section, $\Gamma$ will denote a finitely generated discrete group with a subgroup $H < \Gamma$ of index $m<\infty$.  By the Schreier Index Formula, $H$ is also finitely generated.

We need to analyze the induction maps 
$$\Ind_H^\Gamma\co \bHom_n (H) \maps \bHom_{nm} (\Gamma).$$
Abstractly, induction can be defined by viewing a representation $\rho \co H\to U(n)$ as a (left) module $V$ over the group ring $\bbC[H]$, and defining 
$\Ind_H^\Gamma (V)$ to be the (left) $\bbC[\Gamma]$--module $\bbC[\Gamma] \otimes_{\bbC[H]} V$.  
We need a description of induction as a continuous function between spaces of unitary matrices.  
This is easily obtained by choosing a set of coset representatives $\gamma_1, \ldots, \gamma_m$ for $\Gamma/H$, yielding a direct sum decomposition
$$\bbC[\Gamma] \isom \gamma_1 \bbC[H] \oplus \cdots \oplus \gamma_m \bbC[H]$$
of $\bbC[\Gamma]$ as a \emph{right} $\bbC[H]$--module.  A representation $\rho\co H\to U(n)$ gives $\bbC^n$ a left $\bbC[H]$--module structure, and we denote this module by $\bbC^n_\rho$.  We now have an ordered basis for the complex vector space $\bbC[\Gamma] \otimes_{\bbC[H]} \bbC^n_\rho$, given by
$$\gamma_1 \otimes e_1, \gamma_1\otimes e_2, \cdots, \gamma_1 \otimes e_n, \gamma_2\otimes e_1, \cdots, \gamma_2\otimes e_n, \cdots, \gamma_m\otimes e_1, \cdots, \gamma_m \otimes e_n,$$
where the $e_i$ are the standard basis vectors in $\bbC^n$.
Endowing $\bbC[\Gamma] \otimes_{\bbC[H]} \bbC^n_\rho$ with the unique Hermitian metric making this basis orthonormal, 
one checks that $\Gamma$ acts isometrically on 
$\bbC[\Gamma] \otimes_{\bbC[H]} \bbC^n_\rho$ yielding a well-defined, continuous induction map 
\begin{equation}\label{ind-map}\Ind_H^\Gamma \co \Hom(H, U(n)) \maps \Hom(\Gamma, U(n)),\end{equation}
The $(i,j)$--entry of $\Ind_H^\Gamma (\rho) (\gamma)$ is an entry of $\rho(h)$, for some $h$ depending only on $\gamma$, $i$, and $j$.  It follows that $\Ind_H^\Gamma$ is an \emph{algebraic map}, that is, its graph is an algebraic subset of $\Hom(H, U(n)) \cross \Hom(\Gamma, U(n))$.

Defining $\bInd_H^\Gamma ([\rho]) = [\Ind_H^\Gamma (\rho)]$, we obtain a continuous map
\begin{equation}\label{ind-map2}\bInd_H^\Gamma\co \bHom_n (H) \maps \bHom_{nm} (\Gamma),\end{equation}
(Note that if  $\rho' = X\rho X^{-1}$, then  
$\Ind_H^\Gamma(\rho') = (I_m\otimes X) \big(\Ind_H^\Gamma (\rho)\big) (I_m\otimes X)^{-1}$, where $I_m\otimes X$ denotes the  $m$--fold block sum of $X$.  Hence $\bInd_H$ is well-defined).  

\begin{remark} Although the maps (\ref{ind-map}) depend on our chosen isomorphism $\bbC[\Gamma]\otimes_{\bbC[H]} \bbC^n \isom \bbC^{nm}$, the maps (\ref{ind-map2}) are independent of this choice.  This follows from the fact that if two unitary representations are conjugate via a general linear matrix, then they are in fact conjugate via a unitary matrix.  (A more general statement is proven in Proposition \ref{inv-metric} below.)
\end{remark}

Letting $b\co U(n) \to U(nm)$ denote the block-sum map $X \goesto mX$, we conclude from the above discussion that $\Ind_H^\Gamma$ is equivariant with respect to the conjugation actions of $U(n)$ and $U(nm)$, in the sense that $\Ind_H^\Gamma (X\cdot \rho) = b(X) \cdot \Ind_H^\Gamma (\rho)$.   

\begin{proposition} $\label{Ind-sa}$ 
The map
$\bInd_H^\Gamma \co \bHom_n (H) \maps \bHom_{nm} (\Gamma)$
is semi-algebraic.
\end{proposition}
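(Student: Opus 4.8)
The plan is to deduce semi-algebraicity of $\bInd_H^\Gamma$ from the already-established semi-algebraicity of the \emph{unquotiented} induction map together with the semi-algebraic structure on the quotient maps provided by Schwarz's theorem (Corollary \ref{bHom-sa}). First I would recall from the preceding discussion that $\Ind_H^\Gamma\co \Hom(H,U(n))\to \Hom(\Gamma,U(nm))$ is an algebraic map — its graph is cut out by polynomial equations, since each entry of $\Ind_H^\Gamma(\rho)(\gamma)$ is literally an entry of $\rho(h)$ for an appropriate $h$. In particular it is semi-algebraic. Then I would invoke Corollary \ref{bHom-sa} to get semi-algebraic quotient maps
$$q_H\co \Hom(H,U(n))\maps \bHom_n(H),\qquad q_\Gamma\co \Hom(\Gamma,U(nm))\maps \bHom_{nm}(\Gamma).$$

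Next I would assemble the square relating these four maps: by construction of $\bInd_H^\Gamma$, we have $\bInd_H^\Gamma\circ q_H = q_\Gamma\circ \Ind_H^\Gamma$ as maps $\Hom(H,U(n))\to \bHom_{nm}(\Gamma)$. The composite $q_\Gamma\circ \Ind_H^\Gamma$ is semi-algebraic (composites of semi-algebraic maps are semi-algebraic, as established in Section \ref{real}), and $q_H$ is a \emph{surjective} semi-algebraic map. So the task reduces to the following general principle: if $g\co X\to Z$ is a semi-algebraic map, $q\co X\to Y$ is a surjective semi-algebraic map, and $g$ factors as $g = f\circ q$ for a (necessarily unique) set-theoretic map $f\co Y\to Z$, then $f$ is semi-algebraic. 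I would prove this by examining the graph: $\Gr(f) = \{(y,z)\in Y\times Z : \exists x\in X \text{ with } q(x)=y \text{ and } g(x)=z\}$. The set $\{(x,y,z) : q(x)=y,\ g(x)=z\}$ is the intersection of (reorderings of) $\Gr(q)\times Z$ and $\Gr(g)\times Y$ inside $X\times Y\times Z$, hence semi-algebraic; projecting away the $X$ coordinate and applying the Tarski–Seidenberg theorem (Theorem \ref{closure}) shows $\Gr(f)$ is semi-algebraic. Applying this with $X = \Hom(H,U(n))$, $Y = \bHom_n(H)$, $Z = \bHom_{nm}(\Gamma)$, $q = q_H$, $g = q_\Gamma\circ\Ind_H^\Gamma$, and $f = \bInd_H^\Gamma$ finishes the argument.

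I do not expect a serious obstacle here; the only point requiring mild care is that $\bHom_n(H)$ and $\bHom_{nm}(\Gamma)$ are not literally subsets of Euclidean space but only \emph{homeomorphic} to semi-algebraic sets — so strictly speaking one should fix such homeomorphisms (as the paper says it does ``from now on'' after Corollary \ref{bHom-sa}) and check that $\Gr(f)$ lands in the appropriate product of Euclidean spaces, which it does once all identifications are made. The heart of the matter is simply that the induction map on matrices is polynomial, and that passing to quotients preserves semi-algebraicity because both the quotient maps and the relation ``lies in the same orbit'' are semi-algebraic; everything else is a routine application of the closure properties of Section \ref{real}.
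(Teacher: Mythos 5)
Your proof is correct and follows essentially the same route as the paper: the paper observes directly that $\Gr(\bInd_H^\Gamma)$ is the image of the algebraic set $\Gr(\Ind_H^\Gamma)$ under the semi-algebraic product of quotient maps and applies Tarski--Seidenberg, which is exactly the computation hiding inside your general factorization lemma. The extra packaging (and the remark about fixing the homeomorphisms from Corollary \ref{bHom-sa}) is harmless.
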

\begin{proof}  By definition, we need to show that the graph of $\bInd_H^\Gamma$ is a semi-algebraic subset of $\bHom_n (H) \cross \bHom_{nm} (\Gamma)$.
We observed above that the graph 
$$\Gr (\Ind_H^\Gamma) \subset \Hom(H, U(n)) \cross \Hom(\Gamma, U(nm))$$
is algebraic.  Moreover, the map
\begin{equation}\label{q-prod} \Hom(H, U(n)) \cross \Hom(\Gamma, U(nm)) \maps \bHom_n (H) \cross\bHom_{nm} (\Gamma)
\end{equation}
is a product of semi-algebraic maps, so it too is semi-algebraic.

Since $\Gr(\bInd_H^\Gamma)$ is the image of $\Gr(\Ind_H^\Gamma)$ under the semi-algebraic map (\ref{q-prod}), the Tarski--Seidenberg Theorem implies that $\Gr(\bInd_H^\Gamma)$ is semi-algebraic, as desired.
\end{proof}

\begin{definition}$\label{ind-def}$
Let $\bHom_{nm} (\Gamma)_H \subset \bHom_{nm} (\Gamma)$ denote the image 
$\bInd_H^\Gamma (\bIrr_n (H))$.
If $m = [\Gamma : H]$ does not divide $k$, then no $k$--dimensional representations of $\Gamma$ are induced from $H$, and we set $\bHom_{k} (\Gamma)_H = \emptyset$.
\end{definition}

Since the subspace of irreducible representations is semi-algebraic (Lemma \ref{loc-cpt}) and the induction map is a semi-algebraic mapping (Proposition \ref{Ind-sa}), we obtain the following result as a consequence of the Tarski-Seidenberg Theorem.

\begin{corollary}$\label{Ind_H}$ The subspace $\bHom_{n} (\Gamma)_H$ is a semi-algebraic subset of $\bHom_{n} (\Gamma)$. 
\end{corollary}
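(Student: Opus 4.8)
The plan is to reduce everything to the Tarski--Seidenberg Theorem (Theorem \ref{closure}), using the two facts already in hand: that $\bIrr_n(H)$ is semi-algebraic (Lemma \ref{loc-cpt}) and that $\bInd_H^\Gamma$ is a semi-algebraic map (Proposition \ref{Ind-sa}).

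First I would dispose of the trivial case. If $m = [\Gamma : H]$ does not divide the given dimension, then by Definition \ref{ind-def} the set $\bHom_{n}(\Gamma)_H$ is empty, hence (vacuously) semi-algebraic. So I may assume $m \mid n$ and write $n = n'm$, so that by definition
$$\bHom_{n}(\Gamma)_H = \bInd_H^\Gamma\big(\bIrr_{n'}(H)\big),$$
the image of the semi-algebraic subset $\bIrr_{n'}(H) \subset \bHom_{n'}(H)$ under the semi-algebraic map $\bInd_H^\Gamma \co \bHom_{n'}(H) \maps \bHom_{n'm}(\Gamma) = \bHom_{n}(\Gamma)$.

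Next I would observe that the restriction of $\bInd_H^\Gamma$ to the semi-algebraic subdomain $\bIrr_{n'}(H)$ is again semi-algebraic: its graph is $\Gr(\bInd_H^\Gamma) \cap \big(\bIrr_{n'}(H) \cross \bHom_{n}(\Gamma)\big)$, an intersection of semi-algebraic sets, and semi-algebraic sets are closed under intersection by complementation (Theorem \ref{closure}) together with de Morgan's laws. Applying the Tarski--Seidenberg Theorem to this restricted map then shows that its image $\bHom_{n}(\Gamma)_H$ is a semi-algebraic subset of $\bHom_{n}(\Gamma)$, as desired.

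I do not anticipate any real obstacle here: the substance is entirely carried by the earlier results (Lemma \ref{loc-cpt}, Proposition \ref{Ind-sa}, Theorem \ref{closure}), and the only point that warrants an explicit word is that restricting a semi-algebraic map to a semi-algebraic subdomain preserves semi-algebraicity, which is immediate from closure under intersection.
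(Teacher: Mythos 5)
Your proof is correct and follows exactly the paper's route: the paper likewise deduces the corollary from Lemma \ref{loc-cpt}, Proposition \ref{Ind-sa}, and the Tarski--Seidenberg Theorem. Your extra remarks on the empty case and on restricting a semi-algebraic map to a semi-algebraic subdomain are fine but not needed beyond what the paper already records.
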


In Section \ref{stable-moduli}, we need to consider the interaction between induction and tensor products.  A construction similar to that  for induction yields continuous maps
$$\Hom(\Gamma, U(n)) \cross \Hom(\Gamma, U(m)) \srm{\otimes} \Hom(\Gamma, U(nm)),$$
which depend on a choice of ordered basis for the vector space $\bbC^n \otimes \bbC^m$, and
which descend to continuous maps on the moduli spaces.  These latter maps are independent of the choices made, and in terms of $\bbC[G]$--modules, they send a pair of modules $V$ and $W$ to the module $V\otimes_{\bbC} W$, with the diagonal action of $\bbC[G]$.  For $\rho\co \Gamma\to U(n), \psi\co H\to U(k)$, we   have the Projection Formula (Serre \cite[\S 7.2]{Serre})
\begin{equation}  \label{projection}   
\Ind_H^\Gamma \left(  (\Res^\Gamma_H \rho) \otimes \psi \right) \isom \rho \otimes \Ind_H^\Gamma (\psi). 
\end{equation}

%%%%%%%%%%%%%%%%%%%%%%%%%%%%%%%%%%%%%%%%%%%%%%%%%%%
%%%%%%%%%%%%%%%%%%%%%%%%%%%%%%%%%%%%%%%%%%%%%%%%%%%

\section{Projective representations}$\label{proj}$

In this section, $\Gamma$ will denote a finitely generated discrete group, and $A\normal \Gamma$ will denote a finite index normal subgroup with quotient $Q = \Gamma/A$.

Recall from Section \ref{ind-proj} that if $A$ is abelian, each irreducible representation $\rho\co \Gamma\to U(n)$ is either induced from a subgroup containing $A$, or satisfies $\rho(A) \subset S^1$.  In the latter case, we obtain a representation 
$\brho \co Q= \Gamma/A \to \PU(n) = U(n)/S^1$ (which we will avoid referring to as the ``induced representation").

\begin{definition} Let $\rHom$ denote the subspace of $\Hom(\Gamma, U(n))$ consisting of representations $\rho$ such that $\rho(A) \subset S^1$.  
\end{definition}  

\begin{lemma} $\label{proj-subvar}$ If $\Gamma$ is finitely generated, then $\rHom$ is a closed, $U(n)$--invariant subvariety of $\Hom(\Gamma, U(n))$.  In particular, $\rHom$ is compact, and its image in $\bHom_n (\Gamma)$ is a closed semi-algebraic subset.
\end{lemma}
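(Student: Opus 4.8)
The plan is to show that $\rHom$ is cut out inside $\Hom(\Gamma, U(n))$ by polynomial equations, and then to deduce the remaining claims from results already established. First I would fix a finite generating set $S$ for $\Gamma$, which gives $\Hom(\Gamma, U(n))$ the structure of a real algebraic variety inside $U(n)^{|S|}$, and I would fix a finite generating set $a_1, \ldots, a_r$ for the finitely generated subgroup $A$ (finitely generated because it has finite index in the finitely generated group $\Gamma$). The condition $\rho(A) \subset S^1$ is equivalent to the condition that each $\rho(a_j)$ is a scalar matrix $\lambda I$, since the $a_j$ generate $A$ and the scalars form a subgroup. So $\rHom = \{\rho \in \Hom(\Gamma, U(n)) : \rho(a_j) \text{ is scalar for } j = 1, \ldots, r\}$.

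The key step is therefore to observe that ``$X \in U(n)$ is scalar'' is a closed algebraic condition on $X$: writing $X = (x_{pq})$, the matrix $X$ is scalar if and only if $x_{pq} = 0$ for all $p \neq q$ and $x_{pp} = x_{qq}$ for all $p, q$ — a finite list of polynomial (indeed linear) equations in the real and imaginary parts of the entries. Pulling these equations back along the (polynomial) word maps $\rho \mapsto \rho(a_j)$, which are restrictions to $\Hom(\Gamma, U(n))$ of polynomial maps $U(n)^{|S|} \to U(n)$ since each $a_j$ is a word in $S$, exhibits $\rHom$ as the common zero locus of finitely many polynomials on the variety $\Hom(\Gamma, U(n))$. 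Hence $\rHom$ is a subvariety, and being a closed subset of the compact space $\Hom(\Gamma, U(n))$, it is compact. It is $U(n)$--invariant because conjugating a representation by a unitary matrix $g$ replaces $\rho(a_j)$ by $g\rho(a_j)g^{-1}$, which is scalar exactly when $\rho(a_j)$ is scalar (scalars are central).

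For the final assertion, that the image of $\rHom$ in $\bHom_n(\Gamma)$ is a closed semi-algebraic subset: the quotient map $q \co \Hom(\Gamma, U(n)) \to \bHom_n(\Gamma)$ is semi-algebraic by Corollary \ref{bHom-sa}, so by the Tarski--Seidenberg Theorem (Theorem \ref{closure}) the image $q(\rHom)$ of the semi-algebraic set $\rHom$ is semi-algebraic. For closedness, note $q(\rHom)$ is the continuous image of the compact set $\rHom$ in the Hausdorff space $\bHom_n(\Gamma)$ (which is Hausdorff by Lemma \ref{one-pt2}), hence compact, hence closed. I do not anticipate a serious obstacle here; the only point requiring a little care is the reduction of the defining condition $\rho(A) \subset S^1$ to finitely many equations, which is handled by using a finite generating set of $A$ together with the fact that $\rho$ is a homomorphism and $S^1 \subset U(n)$ is a subgroup.
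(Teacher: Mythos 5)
Your proof is correct and follows essentially the same route as the paper: exhibit $\rHom$ as the zero locus of the (linear) equations saying each $\rho(a)$ is scalar, note $U(n)$--invariance from centrality of $S^1$, and get the last claim from Corollary \ref{bHom-sa} plus the Tarski--Seidenberg Theorem. The only cosmetic difference is that you reduce to a finite generating set of $A$, whereas the paper imposes the conditions for all $a\in A$ and lets the Hilbert Basis Theorem absorb the infinitude; both are fine.
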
 
\begin{proof} Invariance under $U(n)$ follows from the fact that $S^1$ is central in $U(n)$.  The subspace $\rHom\subset\Hom(\Gamma, U(n))$  is the closed subvariety cut out by the requirements that for each $a \in A$, all the off-diagonal entries of $\rho(a)$ are zero and all  the diagonal entries are equal to one another.   The final statement follows from the Tarski--Seidenberg Theorem and  Corollary \ref{bHom-sa}.
\end{proof}

Given a discrete group $G$, the space $\Hom(G, \PU(n))$ has the subspace topology from the product space $\PU(n)^G$, and has a natural action of $\PU(n)$ by conjugation.  
Using Schur's theory of projective representations of finite groups, we will analyze the irreducible representations in $\rHom$ in terms of the map $\rHom \to \Hom(Q, \PU(n))$.  We will now set up some basic terminology regarding projective representations.  For more detail about this subject, we refer the reader to Karpilovsky \cite[Chapter 3]{Karpilovsky}.

Homomorphisms $G\to \PU(n)$ are closely connected to what are usually called projective unitary representations, that is, functions $\rho\co G\to U(n)$ such that $\rho(g_1) \rho(g_2)= \sigma(g_1, g_2) \rho(g_1 g_2)$ for some scalars $\sigma(g_1, g_2)\in S^1$ (we assume that $\rho(1) = 1$).  
Replacing $U(n)$ by $\GL_n (\bbC)$ and $S^1$ by $\bbC^*$ yields the notion of a projective linear representation.
Each homomorphism $G\to \PU(n)$ may be lifted (in many ways) to a projective unitary representation, and conversely each projective representation defines a homomorphism $G\to \PU(n)$. 

There are four notions of equivalence between projective unitary representations $\rho\co G\to U(n)$ and $\rho'\co G\to U(n)$.  We say that $\rho$ and $\rho'$ are \emph{projectively equivalent} if there exists a matrix $A\in \GL_n (\bbC)$ such that for each $g\in G$, $A \rho (g) A^{-1} \rho'(g)^{-1}\in \bbC^*$.  We write $\rho\sim_{\GL} \rho'$ in this situation.  By requiring the matrix $A$ to lie in  $U(n)$, we obtain a (potentially) stricter notion of equivalence, \emph{projective unitary equivalence}, which we denote by $\rho\sim_{U} \rho'$.  Next, we say that $\rho$ and $\rho'$ are \emph{linearly equivalent} ($\rho \approx_{\GL} \rho'$) if there exists a matrix $A\in \GL_n (\bbC)$ such that $A \rho (g) A^{-1} = \rho'(g)$ for all $g\in G$.  Finally, we obtain the notion of \emph{unitary linear equivalence} ($\rho \approx_U \rho'$) by requiring the matrix $A$ to lie in $U(n)$.  

Given two homomorphisms $\phi, \psi\co G\to PU(n)$, we write $\phi\isom \psi$ if there exists $A\in PU(n)$ with $A\phi A^{-1} = \psi$.  The reader may check that such isomorphism classes of homomorphisms $G\to \PU(n)$ correspond bijectively with \emph{projective unitary equivalence classes} of projective unitary representations $G\to U(n)$.  

We say that a projective representation $\rho\co G \to U(n)$ is \emph{irreducible} if there is no proper, non-zero subspace of $\bbC^n$ that is invariant under each of the matrices $\rho(g)$, $g\in G$.
We say that a homomorphism $G\to \PU(n)$ is irreducible if one of its lifts to a projective representation is irreducible.  Note that if $\rho$ and $\rho'$ are two such lifts, then for each $g\in G$, $\rho(g)$ and $\rho'(g)$ differ by a scalar, and hence the invariant subspaces for $\rho$ and $\rho'$ are the same.  Thus \emph{all} projective representations associated to an irreducible homomorphism $G\to \PU(n)$ are irreducible.
On the other hand, if $\rho\co G\to U(n)$ is an irreducible projective representation, then the corresponding homomorphism $\brho\co G\to \PU(n)$ is irreducible as well, since $\rho$ itself is a lift of $\brho$.

We note that irreducibility is preserved under projective equivalence: if $\rho\co G\to U(n)$ is irreducible and $\rho\sim_{\GL} \rho'$, then $\rho'$ is irreducible as well.  In particular, a homomorphism $\rho\co G\to \PU(n)$ is irreducible if and only if all of its $\PU(n)$--conjugates are irreducible.

We note another important fact regarding irreducible projective representations.

\begin{lemma}$\label{irred-proj}$ A representation $\rho\in \rHom$ is irreducible if and only if $\brho\co Q \to \PU(n)$ is irreducible.
\end{lemma}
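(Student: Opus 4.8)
The plan is to unwind the definitions on both sides and reduce the equivalence to a statement about invariant subspaces, which will follow from the way the $\PU(n)$-action and the scalar ambiguity interact with $\bbC[\Gamma]$-submodules of $\bbC^n$. First I would observe that, by definition, $\brho\co Q\to\PU(n)$ is irreducible if and only if some lift to a projective representation of $Q$ is irreducible, and we have already noted in the discussion preceding the lemma that all lifts of an irreducible homomorphism to $\PU(n)$ are simultaneously irreducible or simultaneously reducible (since any two lifts differ pointwise by a scalar, hence have the same invariant subspaces). So the choice of lift is immaterial, and I may work with whichever lift is most convenient.

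The key step is to exhibit a natural lift of $\brho$ and compare its invariant subspaces with those of $\rho$ itself. Given $\rho\in\rHom$, so that $\rho(A)\subset S^1$, pick a set-theoretic section $s\co Q\to\Gamma$ of the quotient map with $s(1)=1$, and define $\rho_s\co Q\to U(n)$ by $\rho_s(q)=\rho(s(q))$. Because the composite $\Gamma\to U(n)\to\PU(n)$ kills $A$, it factors through $Q$, and $\rho_s$ is a lift of the resulting homomorphism $\brho\co Q\to\PU(n)$ to a projective unitary representation; moreover $\rho_s$ is a genuine projective representation since for $q_1,q_2\in Q$ the elements $s(q_1)s(q_2)$ and $s(q_1q_2)$ differ by an element of $A$, and $\rho$ sends that element into $S^1$. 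Now a subspace $W\subseteq\bbC^n$ is invariant under all $\rho_s(q)$, $q\in Q$, if and only if it is invariant under all $\rho(\gamma)$, $\gamma\in\Gamma$: indeed $\Gamma$ is generated by $A$ together with $\{s(q)\}_{q\in Q}$, and the $\rho(a)$ for $a\in A$ act as scalars, hence leave every subspace invariant, so invariance under $\rho(\Gamma)$ is equivalent to invariance under the $\rho(s(q))=\rho_s(q)$. Therefore $\rho$ has a proper nonzero invariant subspace if and only if $\rho_s$ does, i.e. $\rho$ is irreducible as an ordinary representation if and only if $\rho_s$ is irreducible as a projective representation, which by the definition recalled above is equivalent to $\brho$ being irreducible.

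Assembling these observations gives both implications: if $\brho$ is irreducible, then the particular lift $\rho_s$ is irreducible (all lifts are), hence $\rho$ is irreducible by the invariant-subspace identification; conversely, if $\rho$ is irreducible, then $\rho_s$ is an irreducible projective representation lifting $\brho$, so $\brho$ is irreducible. I do not expect any serious obstacle here — the only point requiring a little care is the verification that $\rho_s$ really is a projective representation (that the cocycle values land in $S^1$ and that $\rho_s(1)=1$), and that $A$ together with a section of $Q$ generates $\Gamma$, both of which are immediate. One should also note, for cleanliness, that the argument is independent of the choice of section $s$, which is automatic once one knows all lifts of $\brho$ behave the same way with respect to irreducibility.
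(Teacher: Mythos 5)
Your proposal is correct and is essentially the paper's own argument: the paper also lifts $\brho$ to a projective unitary representation by choosing coset representatives for $A$ (i.e.\ a section of $Q$), observes that every matrix of $\rho$ is a scalar multiple of a matrix of the lift and vice versa, and concludes that the two have the same invariant subspaces. No gaps.
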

\begin{proof}
Choose a set of coset representatives $\{\gamma_i\}_i$ for the cosets of $A$, and lift $\brho$ to a projective representation $\wt{\rho} \co Q \to U(n)$ by setting $\wt{\rho} ([\gamma_i]) = \rho(\gamma_i)$.  Note that for each $\gamma\in \Gamma$, we have $\gamma = \gamma_i a$ for some $a\in A$ and some $i$, so $\rho(\gamma) = \rho(\gamma_i a) = \wt{\rho}([\gamma_i]) \rho(a)$.  Since $\rho(A) \subset S^1$, we see that each matrix comprising the representation $\rho$ is a scalar multiple of a matrix appearing in $\wt{\rho}$, and of course each matrix appearing in $\wt{\rho}$ also appears in $\rho$.  Hence $\wt{\rho}$ and $\rho$ have the same invariant subspaces, so $\wt{\rho}$ is irreducible if and only if $\rho$ is irreducible.
\end{proof}

Our next goal is to show that for \emph{irreducible} projective unitary representations $\rho\co G\to U(n)$, the equivalence relations $\approx_\GL$ and $\approx_U$ coincide, and if we further assume that $G$ is finite, so do the relations $\sim_U$ and $\sim_\GL$.

\begin{lemma}$\label{1-dim}$ Let $\rho\co G\to \GL(V)$ be an irreducible projective representation of $G$ on a finite-dimensional complex vector space $V$.  If $\la\, , \, \ra$ and $\la \, , \, \ra'$ are two $\rho$--invariant Hermitian metrics on $V$, (i.e. $\la \rho(g) v, \rho(g) w\ra = \la v, w \ra$ for all $g\in G$, and similarly for $\la \, , \, \ra'$) then there exists a scalar $t\in \bbR^+$ such that 
$$\la v , w \ra' = t \la v , w \ra$$
for all $v, w\in V$.
\end{lemma}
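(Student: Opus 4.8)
The plan is to reduce the statement to a standard fact about irreducible (ordinary) representations via a Schur's Lemma argument. First I would form the operator that compares the two metrics: since $V$ is finite-dimensional and both $\la\,,\,\ra$ and $\la\,,\,\ra'$ are positive-definite Hermitian forms, there is a unique positive-definite self-adjoint operator $T\co V\to V$ (self-adjoint with respect to $\la\,,\,\ra$) such that $\la v, w\ra' = \la Tv, w\ra$ for all $v, w\in V$. The $\rho$--invariance of both metrics will translate into an intertwining property of $T$.

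Next I would check that $T$ commutes with every $\rho(g)$. Writing $\rho(g)^* $ for the $\la\,,\,\ra$--adjoint of $\rho(g)$, invariance of $\la\,,\,\ra$ says $\rho(g)^*\rho(g) = \id$, i.e. $\rho(g)^* = \rho(g)^{-1}$; invariance of $\la\,,\,\ra'$ says $\rho(g)^* T \rho(g) = T$ (this is the computation $\la T\rho(g)v, \rho(g)w\ra = \la \rho(g)v,\rho(g)w\ra' = \la v,w\ra' = \la Tv,w\ra$). Combining, $\rho(g)^{-1} T \rho(g) = T$, so $T\rho(g) = \rho(g) T$ for all $g\in G$. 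Crucially, this works even though $\rho$ is only a \emph{projective} representation: the ambiguity in $\rho(g_1 g_2)$ versus $\rho(g_1)\rho(g_2)$ is a scalar, and scalars commute with $T$, so the set $\{\rho(g) : g\in G\}$ still has $T$ in its commutant.

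Now I invoke irreducibility. By hypothesis $\rho$ is an irreducible projective representation, meaning no proper nonzero subspace of $V$ is invariant under all $\rho(g)$. Since $T$ is self-adjoint (with respect to $\la\,,\,\ra$), it is diagonalizable, and each of its eigenspaces is invariant under every $\rho(g)$ by the commutation just established. Irreducibility forces $T$ to have a single eigenvalue, so $T = tI$ for some scalar $t$; positive-definiteness of $T$ gives $t\in\bbR^+$. Then $\la v,w\ra' = \la Tv,w\ra = t\la v,w\ra$, as claimed. The only mild subtlety — the ``main obstacle,'' though it is minor — is making sure the Schur-type step is applied correctly in the projective setting: one must note that it is the commutant of the operators $\rho(g)$ (not a genuine group homomorphism) that is relevant, and that an irreducible set of operators closed under scalar multiples has commutant reducing to the scalars by the eigenspace argument above, rather than by quoting Schur's Lemma for groups verbatim.
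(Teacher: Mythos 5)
Your proof is correct and follows essentially the same strategy as the paper's: both form the operator $T$ (the paper's $X=\phi^{-1}\phi'$) comparing the two metrics, show it commutes with every $\rho(g)$, and conclude by a Schur-type argument that $T$ is a positive scalar. The only difference is presentational: you derive the commutation by a direct adjoint computation, while the paper routes through equivariant isomorphisms onto the conjugate-dual space $V^*$ carrying the contragredient projective representation, and your use of the spectral theorem for the self-adjoint $T$ is a clean substitute for the paper's appeal to ``the usual proof of Schur's lemma.''
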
  
\begin{proof} Let $V^*$ denote the  space  of \emph{conjugate}--linear functionals on $V$; that is
$$V^* = \{f\co V\to \bbC \, | \, \forall \, v, w\in V, \, \lambda \in \bbC, \, \, f(v + \lambda w) = f(v) + \overline{\lambda} f(w)\}.$$
Note that $V^*$ is a complex vector space under point-wise addition and (ordinary) scalar multiplication.  Any Hermitian metric $\la \, , \, \ra$ on $V$ induces a complex-linear isomorphism $\phi\co V\srt{\isom} V^*$, where $\phi(v) = \phi_v \co V\to\bbC$ is given by the formula
$$\phi_v (w) = \la v, w\ra \in \bbC.$$

Now, any projective representation $\rho\co G \to \GL(V)$ induces a projective representation $\rho^* \co G \to \GL(V^*)$ (called the contragradient of $\rho$).  If we write the resulting actions of $G$ on $V$ and $V^*$ as $\rho(g) v = g\cdot v$ and $\rho^* (g) f = g\cdot f$, then $\rho^*$ is defined by the formula
$$(g\cdot f) (v)  = f(g^{-1} \cdot v).$$
One may now check that $g\cdot f$ is conjugate linear, and that,  up to multiplication by scalars, this formula gives an action of $G$ on $V^*$.  
Hence $\rho^*$ is a projective representation of $G$.

Next, say we have two $\rho$--invariant metrics $\la \, , \, \ra$ and $\la \, , \, \ra'$ on $V$.  Then a short computation shows that the resulting isomorphisms $\phi, \phi'\co V\to V^*$ are $G$--equivariant, with respect to the representations $\rho$ and $\rho^*$.  Hence the linear isomorphism 
$$X = \phi^{-1} \phi'\co V\to V$$
satisfies $X^{-1} \rho X = \rho$.  Since $\rho$ is irreducible, the usual proof of Schur's lemma shows that $X = t I$ for some $t \in \bbC$.  Now for any $v\in V$, $\phi' (v) = \phi (t v) = t \phi (v)$, so
$$\la v, w \ra' = t \la v, w \ra$$
for any $w\in V$.  All that remains is to check that $t \in \bbR^+$; this follows from the assumption that $\la \, , \, \ra$ and $\la \, , \, \ra'$ are (positive definite) Hermitian metrics.
\end{proof}

\begin{proposition} $\label{inv-metric}$ If $\rho, \rho'\co G \to U(n)$ are irreducible projective unitary representations, then $\rho\approx_\GL \rho' \iff \rho \approx_U \rho'$.
\end{proposition}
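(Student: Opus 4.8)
The implication $\rho \approx_U \rho' \Rightarrow \rho \approx_\GL \rho'$ is immediate, since $U(n) \subset \GL_n(\bbC)$.  For the converse, the plan is to transport the standard metric through the conjugating matrix and then invoke the rigidity statement Lemma \ref{1-dim}.  So suppose $A \in \GL_n(\bbC)$ satisfies $A\rho(g) A^{-1} = \rho'(g)$, equivalently $A\rho(g) = \rho'(g) A$, for all $g \in G$.

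First I would define a new Hermitian metric on $\bbC^n$ by pulling back the standard one along $A$: set $\la v, w \ra' \defn \la Av, Aw \ra$.  This is positive definite since $A$ is invertible.  The key computation is that $\la \, , \, \ra'$ is $\rho$--invariant: for $g \in G$,
$$\la \rho(g) v, \rho(g) w \ra' = \la A\rho(g) v, A\rho(g) w \ra = \la \rho'(g) A v, \rho'(g) A w \ra = \la Av, Aw \ra = \la v, w \ra',$$
where the middle equality uses $A\rho(g) = \rho'(g) A$ and the last uses that $\rho'(g) \in U(n)$ preserves the standard metric (note the projective cocycle plays no role here, since each individual matrix $\rho'(g)$ is unitary).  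The standard metric $\la \, , \, \ra$ is of course also $\rho$--invariant, as $\rho(g) \in U(n)$ for every $g$.

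Now I would apply Lemma \ref{1-dim}, using that $\rho$ is \emph{irreducible}: there is a scalar $t \in \bbR^+$ with $\la v, w \ra' = t \la v, w \ra$ for all $v, w$, i.e. $\la Av, Aw \ra = t \la v, w \ra$, which says $A^* A = t I$.  Then $U \defn A/\sqrt{t}$ satisfies $U^* U = I$, so $U \in U(n)$, and since $U$ differs from $A$ only by a positive scalar, $U\rho(g) U^{-1} = A\rho(g) A^{-1} = \rho'(g)$ for all $g$.  Hence $\rho \approx_U \rho'$.  There is no real obstacle here; the only point requiring care is verifying that $\la \, , \, \ra'$ is genuinely $\rho$--invariant and positive definite, and recalling that Lemma \ref{1-dim} is exactly the rigidity input that needs irreducibility of $\rho$.
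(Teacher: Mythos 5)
Your argument is correct and is essentially the paper's proof: pull back the standard metric along the conjugating matrix, observe that the result is an invariant metric, and apply the rigidity of invariant metrics for irreducible projective representations (Lemma \ref{1-dim}) to rescale the conjugating matrix into $U(n)$. The only cosmetic difference is that you produce a second $\rho$--invariant metric and use irreducibility of $\rho$, whereas the paper produces a second $\rho'$--invariant metric and uses irreducibility of $\rho'$; the two are interchangeable here.
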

\begin{proof} The direction $\rho\approx_U \rho' \implies \rho \approx_\GL \rho'$ is immediate.  
For the converse, let $ P\in \GL_n (\bbC)$ be a matrix satisfying $P^{-1} \rho(g) P = \rho'(g)$ for all $g\in G$.  Define a Hermitian metric on $\bbC^n$
by setting $\la v, w \ra_P = \la Pv, Pw\ra$, where $\la \, , \, \ra$ is the standard Hermitian metric on $\bbC^n$.  Then $\la \, , \, \ra_P$ is an invariant metric for the representation $\rho'$, because
\begin{equation*} \begin{split}
\la \rho'(g) v, \rho'(g) w\ra_P  & = \la P \rho'(g) v, P \rho'(g) w \ra = \la  \rho(g) P v,  \rho(g) P w \ra \\
& = \la  P v,  P w \ra = \la v, w \ra_P,
\end{split}
\end{equation*}
with the third equality following from the fact that $\rho(g) \in U(n)$.  

Now, both $\la \, , \, \ra$ and $\la \, , \, \ra_P$ are $\rho'$--invariant.  By Lemma~\ref{1-dim}, it follows that $\la \, , \, \ra_P = t \la \, , \, \ra$ for some $t\in \bbR^+$.  The matrix $U = \frac{1}{\sqrt{t}} P$ still satisfies
$U^{-1} \rho U = \rho'$, and $U\in U(n)$ because
\begin{equation*} 
\begin{split}\la Uv, Uw \ra & = \la \frac{1}{\sqrt{t}} Pv, \frac{1}{\sqrt{t}} Pw \ra = \frac{1}{ t } \la Pv, Pw \ra \\
& =   \frac{1}{ t } \la  v,  w \ra_P =  \frac{1}{ t } \left( t \la  v,  w \ra \right) = \la v, w \ra
\end{split}
\end{equation*}
for any $v, w\in \bbC^n$.
\end{proof}

\begin{corollary} $\label{U-GL}$ If $\rho, \rho'\co G \to U(n)$ are irreducible projective unitary representations of a finite group $G$, then $\rho\sim_\GL \rho' \iff \rho \sim_U \rho'$.
\end{corollary}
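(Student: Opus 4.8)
The plan is to reduce the corollary to Proposition~\ref{inv-metric}. The implication $\rho\sim_U\rho'\Rightarrow\rho\sim_\GL\rho'$ is immediate from the definitions, since $U(n)\subset\GL_n(\bbC)$, so only the converse requires work. Suppose $\rho\sim_\GL\rho'$, so that there exist $A\in\GL_n(\bbC)$ and scalars $c(g)\in\bbC^*$ with $A\rho(g)A^{-1}=c(g)\,\rho'(g)$ for every $g\in G$. I would replace $\rho'$ by a projectively unitarily equivalent representation that is \emph{linearly} equivalent to $\rho$, and then apply Proposition~\ref{inv-metric}.

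The first step is to observe that each scalar $c(g)$ automatically lies in $S^1$. Taking determinants in $A\rho(g)A^{-1}=c(g)\rho'(g)$ gives $\det\rho(g)=c(g)^n\det\rho'(g)$, and since $\rho(g),\rho'(g)\in U(n)$ we have $|\det\rho(g)|=|\det\rho'(g)|=1$; hence $|c(g)|^n=1$ and so $|c(g)|=1$. Consequently the function $\rho''\co G\to U(n)$ given by $\rho''(g)=c(g)\rho'(g)$ takes values in $U(n)$, and one checks directly that it is again a projective \emph{unitary} representation: its defining scalars differ from those of $\rho'$ by the $S^1$-valued function $(g_1,g_2)\mapsto c(g_1)c(g_2)c(g_1g_2)^{-1}$, and $\rho''(1)=1$ since evaluating the defining identity at $g=1$ forces $c(1)=1$. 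Moreover $\rho''$ has exactly the same invariant subspaces as $\rho'$ (each $\rho''(g)$ is a scalar multiple of $\rho'(g)$), so $\rho''$ is irreducible; and by construction $A\rho(g)A^{-1}=\rho''(g)$, i.e.\ $\rho\approx_\GL\rho''$.

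Now Proposition~\ref{inv-metric} applies to the irreducible projective unitary representations $\rho$ and $\rho''$: from $\rho\approx_\GL\rho''$ it yields $\rho\approx_U\rho''$, and in particular $\rho\sim_U\rho''$. On the other hand $\rho''\sim_U\rho'$ directly from the definition, with conjugating matrix the identity and scalars $c(g)\in S^1$. Since $\sim_U$ is an equivalence relation, combining these gives $\rho\sim_U\rho'$, as required.

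The argument is short once Proposition~\ref{inv-metric} is in hand, so I do not anticipate a serious obstacle. The step that needs care — and the one I would regard as the crux — is the reduction of the hypothesis $\rho\sim_\GL\rho'$, which a priori conjugates $\rho$ into $\rho'$ only up to arbitrary nonzero scalars, to an honest linear equivalence $\rho\approx_\GL\rho''$ with $\rho''$ still valued in $U(n)$; the determinant computation is precisely what guarantees that rescaling $\rho'$ to $\rho''$ does not leave the class of projective unitary representations. (Finiteness of $G$ is not actually needed for this argument; it is the standing hypothesis in this portion of the paper.)
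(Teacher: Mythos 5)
Your proof is correct and follows the same overall reduction as the paper: show the scalars relating $A\rho(g)A^{-1}$ to $\rho'(g)$ are unimodular, rescale $\rho'$ to a projective unitary representation $\rho''$ with $\rho\approx_\GL\rho''$, and invoke Proposition~\ref{inv-metric}. The one genuine difference is how you establish $|c(g)|=1$: the paper raises the identity $P\rho(g)P^{-1}=\lambda(g)\rho'(g)$ to the $m$th power, where $m$ is the order of $g$, and uses that $\rho(g)^m$ and $\rho'(g)^m$ are scalars in $S^1$; you instead take determinants and use $|\det\rho(g)|=|\det\rho'(g)|=1$. Your version is arguably cleaner and, as you note, does not use finiteness of $G$ (nor even that $g$ has finite order), whereas the paper's argument does; since Proposition~\ref{inv-metric} is also stated without a finiteness hypothesis, your observation that the corollary's finiteness assumption is inert here is accurate. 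All the supporting checks ($c(1)=1$, $\rho''$ is again a projective unitary representation with the same invariant subspaces, and the concatenation of equivalences at the end) are sound.
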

\begin{proof}  Again, the direction $\rho\sim_U \rho' \implies \rho \sim_\GL \rho'$ is immediate, so we assume that $\rho \sim_\GL \rho'$.  This means that for some matrix $P\in \GL_n (\bbC)$ and some function $\lambda\co G\to \bbC^*$, we have
\begin{equation}\label{simGL}P\rho(g) P^{-1} = \lambda(g) \rho'(g)\end{equation}
 for all $g\in G$.  
 
 We claim that $\lambda(g) \in S^1$ for each $g\in G$.  If $g$ has order $m$, then raising both sides of (\ref{simGL}) to the $m^\textrm{th}$ power shows that $P\rho(g)^m P^{-1} = \lambda(g)^m \rho'(g)^m$.  Since $\rho$ and $\rho'$ are projective unitary representations, we have $\rho(g)^m, \rho'(g)^m \in S^1$, so $P \rho(g)^m P^{-1} = \rho(g)^m$ and hence $\lambda(g)^m = \rho(g)^m/\rho'(g)^m \in S^1$.  This implies that $\lambda(g)$ itself lies in $S^1$, as desired.

Now, setting $\rho''(g) = \lambda(g) \rho'(g)$, we see that $\rho''(g) \co G\to U(n)$ is still a projective \emph{unitary} representation, and now $\rho \approx_\GL \rho''$ (via the matrix $P\in \GL_n (\bbC)$).  By Proposition \ref{inv-metric}, there exists a matrix $U\in U(n)$ with
$U\rho(g) U^{-1} = \rho''(g) = \lambda(g) \rho'(g)$ for all $g\in G$, so $\rho(g) \sim_U \rho'(g)$, as desired.
\end{proof}

Using Corollary~\ref{U-GL}, we will deduce a key finiteness result for projective unitary representations of \emph{finite} groups.
This will be a corollary of the following classical result due to Schur~\cite{Schur} (see also Karpilovsky~\cite{Karpilovsky}, Tappe~\cite[Corollary 3.6]{Tappe}).

\begin{theorem}$\label{schur}$  For any finite group $G$, the number of projective equivalence classes ($\sim_{\GL}$--classes) of irreducible projective representations $G\to \GL_n (\bbC)$ is finite. 
\end{theorem}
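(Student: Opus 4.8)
The plan is to reduce the statement to two classical facts: the finiteness of the Schur multiplier $H^2(G,\bbC^*)$ of a finite group $G$, and the semisimplicity of twisted group algebras over $\bbC$. Recall that a projective representation $\rho\co G\to\GL_n(\bbC)$ determines a $2$--cocycle $\sigma_\rho\in Z^2(G,\bbC^*)$ by $\rho(g)\rho(h)=\sigma_\rho(g,h)\,\rho(gh)$. Conjugating $\rho$ by a fixed invertible matrix leaves $\sigma_\rho$ unchanged, while replacing each $\rho(g)$ by $\mu(g)\rho(g)$ for a function $\mu\co G\to\bbC^*$ multiplies $\sigma_\rho$ by the coboundary of $\mu$; hence $\rho\sim_{\GL}\rho'$ forces $\sigma_\rho$ and $\sigma_{\rho'}$ to represent the same class in $H^2(G,\bbC^*)$. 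Since $G$ is finite, this group is finite, so I would fix cocycles $\sigma_1,\ldots,\sigma_r$ representing its elements. Given any $\rho$, adjusting it by a suitable scalar function $\mu$ --- which changes neither irreducibility nor the $\sim_{\GL}$--class --- one may assume $\sigma_\rho=\sigma_i$ for some $i$.

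Next, for each $i$ I would pass to the \emph{twisted group algebra} $\bbC^{\sigma_i}[G]$: the $\bbC$--algebra with basis $\{u_g : g\in G\}$ and multiplication $u_g u_h=\sigma_i(g,h)\,u_{gh}$, of dimension $|G|$ over $\bbC$. A projective representation of $G$ with cocycle $\sigma_i$ is exactly a module over $\bbC^{\sigma_i}[G]$; the irreducible ones correspond to simple modules, and two of them are linearly equivalent ($\approx_{\GL}$) precisely when the corresponding modules are isomorphic. The standard averaging argument --- valid because $|G|$ is invertible in $\bbC$ --- shows $\bbC^{\sigma_i}[G]$ is semisimple, so Artin--Wedderburn gives it only finitely many isomorphism classes of simple modules (each of dimension at most $|G|$). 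Hence there are only finitely many irreducible projective representations with cocycle $\sigma_i$, up to linear equivalence, and a fortiori up to $\sim_{\GL}$.

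Combining the two steps: every irreducible projective representation of $G$ is $\sim_{\GL}$--equivalent to one whose cocycle is among $\sigma_1,\ldots,\sigma_r$, and for each such cocycle there are finitely many such representations up to $\sim_{\GL}$; so the total number of $\sim_{\GL}$--classes --- in any fixed dimension, or over all dimensions at once --- is finite. The main obstacle is really the finiteness of $H^2(G,\bbC^*)$, equivalently the existence of a \emph{finite} representation group $\wt{G}$ of $G$ through which all projective representations of $G$ factor linearly; here I would either cite Schur's original argument via Karpilovsky, or deduce finiteness from the cohomological identification $H^2(G,\bbC^*)\cong H^3(G,\bbZ)$ (using that $\bbC^*$ is divisible and $H^n(G,\bbQ)=0$ for $n>0$), the latter group being finite since it is finitely generated and annihilated by $|G|$.
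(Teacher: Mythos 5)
Your proposal is correct and follows the same overall architecture as the paper's sketch --- pass to the cocycle, use finiteness of $H^2(G;\bbC^*)$, normalize the cocycle within its cohomology class, and then count irreducible modules over the twisted group algebras $\bbC^{\sigma_i}[G]$ --- but it diverges at the final counting step, and that divergence is exactly where the paper is deliberately cautious. The paper cites Schur's bijection between simple $\bbC^\sigma[G]$--modules and the $\sigma$--regular conjugacy classes of $G$ (Karpilovsky, Theorem 6.7), precisely to avoid invoking semisimplicity: the Remark following the theorem notes that the naive transcription of Maschke's averaging (summing $u_g\,\pi\,u_{g^{-1}}$) forces one to divide by $\sum_{g}\sigma(g,g^{-1})$, which is not obviously nonzero. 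So your appeal to ``the standard averaging argument'' needs one sentence of care: average the conjugates $u_g\,\pi\,u_g^{-1}$ by the \emph{invertible basis elements} $u_g$ themselves; then the scalars $\sigma(g,g^{-1})$ cancel inside each conjugate, one divides by $|G|$ as usual, and the resulting map is an equivariant projection, so $\bbC^{\sigma_i}[G]$ is indeed semisimple. Alternatively, you can bypass semisimplicity entirely: $\bbC^{\sigma_i}[G]$ is a finite-dimensional algebra, so every simple module occurs as a composition factor of the regular module, and Jordan--H\"older already bounds the number of isomorphism classes of simple modules by $|G|$. With either patch your argument is airtight; what your route buys is independence from the $\sigma$--regular conjugacy class machinery, while the paper's route buys the sharper statement (an exact count of the irreducibles attached to each cocycle class).
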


We sketch the proof.  
Each projective representation $\rho\co G\to \GL_n (\bbC)$ has an associated cohomology class in $H^2 (G ; \bbC^*)$.  Specifically, given $g, h\in G$, we have $\sigma(g,h) \mathrel{\mathop :}= \rho(g) \rho(h) \rho(gh)^{-1} \in \bbC^*$, and the assignment $(g,h) \goesto \sigma(g,h)$ is a $\bbC^*$--valued $2$--cocycle on $G$.  If $\rho\sim_{GL} \rho'$, then the corresponding cocyles are cohomologous \cite[Chapter 3, Lemma 1.1 (i)]{Karpilovsky}, so we have a well-defined class in $H^2(G; \bbC^*)$ associated to each $\sim_\GL$--class of projective representations.  Now, for any finite group $G$, it turns out that the group $H^2 (G; \bbC^*)$ is finite (for a short proof, see \cite[Chapter 2, Theorem 3.2]{Karpilovsky}).  

Schur showed that each cohomology class contains only finitely many $\sim_\GL$--classes of \emph{irreducible} projective representations.
This is proven in three steps.  First, one observes that $\approx_\GL$--classes of projective representations with cocycle $\sigma$ are in bijection with isomorphism classes of modules over the \emph{twisted group algebra} $\bbC^\sigma [G]$, and irreducible projective representations correspond to irreducible modules.  Here $\bbC^\sigma [G]$ is the $\bbC$--algebra with basis $\{\overline{g} \,|\, g\in G\}$ and with multiplication induced by setting $\ol{g_1} \ol{g_2} = \sigma(g_1, g_2) (\ol{g_1} \cdot \ol{g_2})$  (see \cite[\S3.2]{Karpilovsky}).  

The second step is to show that for any cocycle $\sigma$, there are finitely many isomorphism classes of irreducible modules over $\bbC^\sigma [G]$.  In fact, the set of such isomorphism classes is in bijection with the set of so-called $\sigma$--regular conjugacy classes in $G$ \cite[Theorem 6.7]{Karpilovsky}.  (An extension of this result can be found in Tappe \cite{Tappe}.)
We now conclude that there are finitely many $\approx_\GL$--classes of irreducible projective representations with cocycle $\sigma$.  The final step is to check that if $\sigma'$ is cohomologous to $\sigma$, then every projective representation with cocyle $\sigma'$ is $\sim_\GL$--equivalent to a projective representation with cocycle $\sigma$ (\cite[Lemma 1.1 (ii)]{Karpilovsky}).  This shows that there are finitely many $\sim_\GL$--classes of irreducible projective representations with associated cohomology class $[\sigma]\in H^2 (G; \bbC^*)$.

\begin{remark} For ordinary representations of finite groups, the fact that there are finitely many irreducibles is often proven by observing that the group ring $\bbC[G]$ is semisimple.  
If $G$ is solvable, then a theorem of Passman~\cite[Theorem 3]{Passman} asserts that $\bbC^\sigma [G]$ is semisimple.  
In general, one could try to prove this result by the same averaging argument used to show that the ordinary group ring $\bbC[G]$ is semisimple (see, for example, Serre \cite[Chapter 6, Proposition 9]{Serre}).  However, in order to average over the group $G$, one must divide by $\sum_{g\in G} \sigma (g, g^{-1}) \in \bbC^*$ rather than by the order of $G$.  It is unclear when this sum is non-zero.
\end{remark}

Combining Corollary \ref{U-GL} and Theorem \ref{schur} yields a finiteness result for irreducible projective unitary representations of finite groups.

\begin{corollary} $\label{discrete}$ For any finite group $G$, there are finitely many projective \emph{unitary} equivalence classes ($\sim_{U}$--classes) of irreducible projective representations $\rho\co G\to U(n)$.  
Equivalently, there are finitely many irreducible elements in the moduli space $\Hom(Q, \PU(n))/\PU(n)$.
\end{corollary}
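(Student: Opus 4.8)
The plan is to obtain Corollary~\ref{discrete} directly from Theorem~\ref{schur} and Corollary~\ref{U-GL}, with essentially no additional work. The key observation is that sending an irreducible projective unitary representation $\rho\co G\to U(n)$ to itself defines a map $\Phi$ from the set of $\sim_U$-equivalence classes of irreducible projective unitary representations $G\to U(n)$ to the set of $\sim_\GL$-equivalence classes of irreducible projective (linear) representations $G\to \GL_n(\bbC)$. This is well defined because $\rho\sim_U\rho'$ trivially implies $\rho\sim_\GL\rho'$. I would then argue that $\Phi$ is \emph{injective}: this is exactly the content of Corollary~\ref{U-GL}, which says that for irreducible projective unitary representations of a finite group the relations $\sim_\GL$ and $\sim_U$ agree, so two classes in the source with the same image must already coincide. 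By Theorem~\ref{schur} the target of $\Phi$ is a finite set, and hence so is the source; this proves the first assertion.

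For the ``equivalently'' clause I would invoke the dictionary set up earlier in Section~\ref{proj}: isomorphism classes of homomorphisms $G\to\PU(n)$ are in bijection with $\sim_U$-classes of projective unitary representations $G\to U(n)$, and under this bijection irreducible homomorphisms correspond precisely to irreducible projective unitary representations (all lifts of an irreducible homomorphism $G\to\PU(n)$ are irreducible, and the homomorphism attached to an irreducible projective representation is irreducible). Since $\Hom(Q,\PU(n))/\PU(n)$, with $Q=G$, is by definition the set of isomorphism classes of homomorphisms $G\to\PU(n)$, its irreducible elements biject with the $\sim_U$-classes just shown to be finite, giving the second assertion.

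The argument has no real obstacle; the only point that needs care is resisting the temptation to prove more than is needed. One might be inclined to show that \emph{every} irreducible projective linear representation of $G$ is $\sim_\GL$-equivalent to a unitary one --- which would require an averaging argument producing an invariant Hermitian metric, after first rescaling the cocycle to have modulus one using the vanishing of $H^2(G;\bbR)$ --- but this is unnecessary: injectivity of $\Phi$ into a finite set already forces the source to be finite. So the ``hard part'' is really just bookkeeping: keeping straight the three parallel descriptions --- projective unitary representations up to $\sim_U$, homomorphisms $G\to\PU(n)$ up to conjugacy, and points of the moduli space $\Hom(Q,\PU(n))/\PU(n)$ --- all of which have already been identified in the preceding text.
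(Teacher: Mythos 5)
Your proof is correct and follows essentially the same route as the paper, which simply combines Corollary~\ref{U-GL} (injectivity of the passage from $\sim_U$--classes to $\sim_\GL$--classes on irreducibles) with the finiteness in Theorem~\ref{schur}; your explicit map $\Phi$ and the dictionary for the ``equivalently'' clause just spell out what the paper leaves implicit.
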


\begin{corollary} $\label{disjoint}$ The space $\rHom$ is the disjoint union (topologically) of the subspaces $\rHom \cap \Sum(\Gamma, U(n))$ and
$$\rHom_{[\psi]} \co= \{\rho\co \Gamma \to U(n) \,\, | \,\, \rho(A) \subset S^1, \,\, \overline{\rho} \isom \psi \co Q \to \PU(n)\},$$
where $\psi$ ranges over a set of representatives for the finite collection of irreducibles in $\Hom(Q, \PU(n))/\PU(n)$.  
\end{corollary}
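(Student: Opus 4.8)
The plan is to exhibit the decomposition as the disjoint union of the closed locus of reducibles with the fibers, over the finitely many irreducible points of the moduli space $\Hom(Q,\PU(n))/\PU(n)$, of a continuous ``projectivization'' map, and then to invoke the elementary fact that a finite partition into closed subsets is automatically a partition into clopen subsets.

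First I would produce a continuous map $q\co \rHom \to \Hom(Q,\PU(n))$ by sending $\rho$ to $\brho$. Fixing coset representatives $\gamma_1,\dots,\gamma_r$ for $Q=\Gamma/A$, one checks as in the proof of Lemma~\ref{irred-proj} that $\brho([\gamma]):=[\rho(\gamma)]\in\PU(n)$ depends only on the coset of $\gamma$, since $\rho(A)\subset S^1$ is central; thus each coordinate of $\brho\in\PU(n)^Q$ is the composite of evaluation at some $\gamma_i$ with the quotient $U(n)\to\PU(n)$, so $q$ is continuous for the subspace topology on $\Hom(Q,\PU(n))\subset\PU(n)^Q$. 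Composing $q$ with the orbit map gives a continuous map $\bar q\co \rHom \to \Hom(Q,\PU(n))/\PU(n)$. The target is a quotient of the compact Hausdorff space $\Hom(Q,\PU(n))$ (a closed subspace of $\PU(n)^Q$, which is compact since $Q$ is finite and $\PU(n)$ is compact) by the compact group $\PU(n)$, hence is again compact Hausdorff by the standard facts about such quotients used in the remark following Lemma~\ref{one-pt2}; in particular each of its points is closed. Since $\rHom_{[\psi]}=\bar q^{-1}([\psi])$ by definition, each $\rHom_{[\psi]}$ is closed in $\rHom$.

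Next I would verify that the listed subsets cover $\rHom$ and are pairwise disjoint. Every $\rho\in\rHom$ is reducible or irreducible. If it is reducible, then $\rho\in\rHom\cap\Sum(\Gamma,U(n))$. If it is irreducible, then by Lemma~\ref{irred-proj} the homomorphism $\brho\co Q\to\PU(n)$ is irreducible, so $[\brho]$ is one of the finitely many irreducible points of $\Hom(Q,\PU(n))/\PU(n)$ (Corollary~\ref{discrete}), hence equals $[\psi]$ for a unique representative $\psi$, and $\rho\in\rHom_{[\psi]}$. Conversely, if $\rho\in\rHom_{[\psi]}$ then $\brho\isom\psi$ is irreducible, so $\rho$ is irreducible by Lemma~\ref{irred-proj}; this shows $\rHom_{[\psi]}\cap\Sum(\Gamma,U(n))=\emptyset$, and $\rHom_{[\psi]}\cap\rHom_{[\psi']}=\emptyset$ whenever $[\psi]\neq[\psi']$ because a single $\brho$ cannot be $\PU(n)$-conjugate to two non-conjugate homomorphisms.

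Finally, $\rHom\cap\Sum(\Gamma,U(n))$ is closed in $\rHom$ by Lemma~\ref{loc-cpt}, and each $\rHom_{[\psi]}$ is closed by the first step; as there are only finitely many pieces (Corollary~\ref{discrete}) and they partition $\rHom$, the complement of any one piece is a finite union of closed sets and hence closed, so each piece is also open. A finite partition into clopen subsets is exactly a topological disjoint union, which is the assertion. The one point requiring care — rather than a genuine obstacle — is that openness of the individual strata is not visible directly and is forced only by the finiteness in Corollary~\ref{discrete}; continuity and well-definedness of $q$, together with the covering and disjointness statements, are routine once Lemmas~\ref{irred-proj} and~\ref{loc-cpt} are in hand.
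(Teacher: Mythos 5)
Your proposal is correct and follows essentially the same route as the paper: show each stratum is closed (the reducible locus via Lemma \ref{loc-cpt}, each $\rHom_{[\psi]}$ as the preimage of a point under the continuous map to the Hausdorff quotient $\Hom(Q,\PU(n))/\PU(n)$), use Lemma \ref{irred-proj} and Corollary \ref{discrete} for the set-theoretic partition, and conclude that a finite partition into closed sets is a partition into clopen sets. Your additional verification of the continuity of $q$ and the Hausdorffness of the target is just a more explicit spelling-out of steps the paper leaves implicit.
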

\begin{proof}
By Lemma~\ref{irred-proj}, $\rHom$ is the disjoint union, set-theoretically, of the above spaces.  Lemma \ref{loc-cpt} tells us that $\Sum(\Gamma, U(n))$ is closed in $\Hom(\Gamma, U(n))$, so $\rHom \cap\Sum(\Gamma, U(n))$ is closed in $\rHom$.  Each subspace $\rHom_{[\psi]}$ is closed in $\rHom$ as well, because it is the inverse image of the point
$[\psi]$ under the continuous map 
$$\rHom \maps \Hom (Q, PU(n))/PU(n)$$
 sending $\rho$ to $[\brho]$.  Note here that since $PU(n)$ is compact, $\Hom (Q, PU(n))/PU(n)$ is Hausdorff (see, for example, Bredon \cite[Theorem I.3.1]{Bredon-transf}).

We have now partitioned $\rHom$ into a finite number of disjoint closed sets, and hence each must be open as well, completing the proof.
\end{proof}

We now study the subspaces $\rHom_{[\psi]}$.  Each of these subspaces is the union, over $\psi' \isom \psi$, of the subspaces
$$\rHom_{\psi'} \mathrel{\mathop :}= \{ \rho\in \rHom \, | \, \brho = \psi'\}.$$  

\begin{lemma} $\label{fibers}$ For each $\phi\co Q\to PU(n)$, the subspace 
$$\rHom_{\phi} \subset \Hom(\Gamma, U(n))$$
is a closed, semi-algebraic subset.  In particular, $\rHom_\phi$ is compact.
\end{lemma}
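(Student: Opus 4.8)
The plan is to realize $\rHom_\phi$ as an \emph{algebraic} subset of $\Hom(\Gamma, U(n))$; then being closed (in the Euclidean topology) and semi-algebraic is automatic, and compactness follows because $\Hom(\Gamma, U(n))$ is compact and a closed subset of a compact space is compact.

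First I would reduce the defining condition $\brho = \phi$ to finitely many equations. Fix a finite generating set $s_1, \dots, s_m$ of $\Gamma$, and for each $i$ choose a lift $M_i \in U(n)$ of $\phi([s_i]) \in \PU(n) = U(n)/S^1$. Since $\rho(A) \subset S^1$ for every $\rho \in \rHom$, the homomorphism $\pi \circ \rho \co \Gamma \to \PU(n)$ annihilates $A$ and hence descends to the homomorphism $\brho \co Q \to \PU(n)$; as $\brho$ and $\phi$ are both homomorphisms on $Q$, they coincide if and only if they agree on the generators $[s_i]$, i.e.\ if and only if $\pi(\rho(s_i)) = \phi([s_i])$ for all $i$, i.e.\ if and only if $\rho(s_i) M_i^{-1} \in \ker \pi = S^1$ for all $i$. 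Thus $\rHom_\phi = \rHom \cap \{\, \rho \mid \rho(s_i) M_i^{-1} \text{ is a scalar matrix for } i = 1, \dots, m \,\}$, noting that $\rho(s_i) M_i^{-1}$ lies in $U(n)$ a priori, so ``scalar matrix'' and ``element of $S^1$'' mean the same thing here.

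Next I would observe that each condition ``$\rho(s_i) M_i^{-1}$ is a scalar matrix'' is a system of polynomial (indeed affine-linear) equations in the coordinates of $\Hom(\Gamma, U(n)) \subset U(n)^m$: the entries of $\rho(s_i) M_i^{-1}$ depend affine-linearly on the entries of $\rho(s_i)$ since $M_i$ is a fixed matrix, and ``scalar'' means that all off-diagonal entries vanish and all diagonal entries are equal. Intersecting these finitely many conditions with $\rHom$, which is itself cut out by polynomial equations (Lemma~\ref{proj-subvar}), exhibits $\rHom_\phi$ as an algebraic subset of $\Hom(\Gamma, U(n))$, and the three assertions follow as indicated above.

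This is essentially bookkeeping and I do not anticipate a genuine obstacle; the only step needing a little care is the reduction of $\brho = \phi$ to conditions on generators, which uses that $\Gamma$ is finitely generated and that a homomorphism is determined by its values on a generating set, together with the remark that $\brho$ descends to $Q$ precisely because $\rho \in \rHom$. If one prefers to separate the topology from the algebra, closedness also follows directly: $\rHom_\phi$ is the preimage of the point $\phi$ under the continuous map $\rHom \to \Hom(Q, \PU(n))$, $\rho \mapsto \brho$, and $\Hom(Q, \PU(n)) \subset \PU(n)^Q$ is Hausdorff, so the preimage is closed in $\rHom$ (hence compact, $\rHom$ being compact by Lemma~\ref{proj-subvar}); the explicit equations are then needed only for semi-algebraicity.
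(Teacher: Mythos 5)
Your proof is correct and follows essentially the same route as the paper: reduce the condition $\brho=\phi$ to finitely many conditions on generators, and obtain closedness either from the explicit equations or by viewing $\rHom_\phi$ as the fiber of the continuous map $\rHom\to\Hom(Q,\PU(n))$ over a point of a Hausdorff space. The one small refinement is that where the paper writes the condition as ``there exist $\lambda_i\in S^1$ with $\rho(\gamma_i)=\lambda_i P_i$'' and then invokes elimination of quantifiers, you observe directly that ``$\rho(s_i)M_i^{-1}$ is scalar'' is a system of linear equations, exhibiting $\rHom_\phi$ as an honest algebraic subset with no quantifier elimination needed.
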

\begin{proof}  The fact that $\rHom_\phi$ is closed in $\Hom(\Gamma, U(n))$ follows immediately from the fact that this space is the fiber, over $\phi$, of the natural map 
$$\rHom \maps \Hom(Q, PU(n));$$ note also that $\rHom$ is closed in $\Hom(\Gamma, U(n))$ (Lemma \ref{proj-subvar}).  

To see that $\rHom_\phi$ is semi-algebraic, choose generators $\gamma_1, \ldots, \gamma_r$ for $\Gamma$ and representatives $P_i \in U(n)$ for $\phi ([\gamma_i]) \in PU(n)$.  One then obtains a description of $\rHom_\psi$ involving quantified polynomial equations, since a representation $\rho\co \Gamma\to U(n)$ will satisfy $\overline{\rho} = \psi$ if and only if it is scalar on $A$ and there exist $\lambda_i \in S^1$ with $\rho(\gamma_i) = \lambda_i P_i$ ($i=1, \ldots, r$).  Elimination of quantifiers (see Section \ref{real}) then shows that $\rHom_\psi$ is semi-algebraic.
\end{proof}

If $\psi' = P\psi P^{-1}$ for some $P\in \PU(n)$, then $P$ and $P^{-1}$ induce inverse homeomorphisms between $\rHom_\psi$ and $\rHom_{\psi'}$.  In fact, we will now show that $\rHom_{[\psi]}$ is a locally trivial fiber bundle with these fibers.  

\begin{proposition}$\label{bundle}$  Let $\psi \co Q \to \PU(n)$ be a homomorphism for which the subspace $\rHom_{[\psi]}$ is non-empty.  Then the map
$$\rHom_{[\psi]} \srm{\pi} \PU(n)\cdot \psi \subset \Hom(Q, \PU(n)),$$
given by $\pi (\rho) = \overline{\rho}$, is a fiber bundle over the orbit of $\psi$, with fiber $\rHom_{\psi}$.
\end{proposition}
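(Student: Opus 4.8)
The plan is to exhibit local trivializations of $\pi$ over the orbit $\PU(n)\cdot\psi$ by using the action of $\PU(n)$ on $\rHom_{[\psi]}$ covering the conjugation action on the orbit, together with a local section of the orbit map $\PU(n)\to\PU(n)\cdot\psi$. First I would observe that $\PU(n)$ acts on $\rHom_{[\psi]}$ by conjugation: if $\rho\in\rHom$ and $g\in\PU(n)$, lift $g$ to $U\in U(n)$ and set $g\cdot\rho = U\rho U^{-1}$; since $S^1$ is central this is independent of the lift, it sends $\rHom$ to itself (scalar-on-$A$ is preserved), and it covers the conjugation action on $\Hom(Q,\PU(n))$ in the sense that $\overline{g\cdot\rho} = g\,\overline{\rho}\,g^{-1}$. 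In particular $\pi$ is $\PU(n)$-equivariant, the action permutes the fibers $\rHom_{\phi}$ transitively over the orbit (each $\rHom_\phi$ is non-empty for $\phi$ in the orbit since $\rHom_{[\psi]}\neq\emptyset$ and the orbit is a single $\PU(n)$-orbit), and $\pi^{-1}(\psi) = \rHom_{\psi}$, which is compact by Lemma~\ref{fibers}.

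Next I would set up the trivialization. The orbit $\PU(n)\cdot\psi$ is homeomorphic to $\PU(n)/\Stab(\psi)$, where $\Stab(\psi)=\{g\in\PU(n)\mid g\psi g^{-1}=\psi\}$ is a closed, hence compact, subgroup; by the standard principal-bundle theory for compact Lie groups (e.g. Bredon, \emph{Introduction to Compact Transformation Groups}, or Steenrod), the orbit map $q\co\PU(n)\to\PU(n)\cdot\psi$ is a locally trivial principal $\Stab(\psi)$-bundle, so every point of the orbit has a neighborhood $V$ admitting a continuous local section $s\co V\to\PU(n)$ of $q$. Given such a $V$ and $s$, I define
$$
\Phi_V\co V\times\rHom_\psi \maps \pi^{-1}(V), \qquad \Phi_V(\phi,\rho) = s(\phi)\cdot\rho.
$$
This is continuous (the action is continuous and $s$ is continuous), it satisfies $\pi(\Phi_V(\phi,\rho)) = s(\phi)\cdot\psi = \phi$ since $s(\phi)$ maps to $\phi$ under $q$, and its inverse is $\rho'\mapsto\big(\pi(\rho'),\, s(\pi(\rho'))^{-1}\cdot\rho'\big)$, which makes sense because $s(\pi(\rho'))^{-1}\cdot\rho'$ lies in $\pi^{-1}(\psi)=\rHom_\psi$; continuity of the inverse follows from continuity of $\pi$, of $s$, and of the action. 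Hence $\Phi_V$ is a homeomorphism over $V$, giving the required local trivializations, and the transition maps between overlapping charts are given by the continuous map $V\cap V'\to\Stab(\psi)$ built from $s$ and $s'$, acting on $\rHom_\psi$.

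The main obstacle I anticipate is purely a matter of care rather than depth: verifying that the $\PU(n)$-action on $\rHom_{[\psi]}$ is well-defined and continuous (independence of the lift $U$ of $g$ is where centrality of $S^1$ enters, and one must be slightly careful topologizing via generators as in Section~\ref{rep-spaces}), and assembling the local section $s$ correctly so that $s(\phi)\cdot\rho$ genuinely lands in the fiber over $\phi$. One subtle point worth stating explicitly: the orbit $\PU(n)\cdot\psi$ is automatically a submanifold of $\Hom(Q,\PU(n))$ and the orbit map is a fibration precisely because $\PU(n)$ is compact, so no slice-theorem machinery beyond the principal-bundle structure of $\PU(n)\to\PU(n)/\Stab(\psi)$ is needed. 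I would also remark that this proposition is exactly what one needs to reduce computations about $\rHom_{[\psi]}$ to computations about the single fiber $\rHom_\psi$ together with the (finite, by Corollary~\ref{discrete}) combinatorics of the orbit.
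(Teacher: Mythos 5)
Your proposal is correct and follows essentially the same route as the paper: both use the principal $\Stab(\psi)$--bundle structure of $\PU(n)\to\PU(n)/\ol{\Stab}(\psi)\homeo \PU(n)\cdot\psi$ to obtain local sections and then trivialize $\pi$ by conjugating with the section. The only (minor) difference is in verifying that the trivialization is a homeomorphism: you write down the explicit continuous inverse $\rho'\mapsto\bigl(\pi(\rho'),\,s(\pi(\rho'))^{-1}\cdot\rho'\bigr)$, whereas the paper refines the cover so as to work over closures $\ol{U_j}$ and invokes compactness of $\ol{U_j}\cross\rHom_\psi$ to upgrade a continuous bijection to a homeomorphism; your version is a slight streamlining, provided one checks (as you note) that the $\PU(n)$--action on $\rHom_{[\psi]}$ is continuous.
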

\begin{proof} Let $\ol{\Stab} (\psi) \subset \PU(n)$ be the stabilizer of $\psi$ under the $\PU(n)$--action, and let $\Stab(\psi)$ denote the inverse image of $\ol{\Stab}(\psi)$ in $U(n)$.  Note that both of these subgroups are closed.  Since $U(n)$ is a compact Lie group and $\Stab (\psi)$ is a closed subgroup, the quotient map $q_\psi\co U(n) \to U(n)/\Stab (\psi)$ is a principal $\Stab (\psi)$--bundle (see, for example, Duistermaat and Kolk \cite[1.10.7 and 1.11.4]{Duis-Kolk}).  Hence $q_\psi$ admits local sections (since we only need the existence of local sections, Gleason's Theorem \cite{Gleason} could be used here).  Choose an open covering $\{V_i\}_i$ of $U(n)/\Stab (\psi)$ admitting local sections $\alpha_i \co V_i \to U(n)$.  Since $U(n)/\Stab (\psi)$ is a Lie group, it is regular (in fact, quotients of regular spaces by compact groups are always regular; see Munkres \cite[Exercise 31.8]{Munkres}), so we may choose an open cover $\{U_j\}_j$ of $U(n)/\Stab (\psi)$ such that for each $j$ there exists $i$ with $\ol{U_j} \subset V_i$.  

We have homeomorphisms 
$$U(n)/\Stab(\psi) \srt{\homeo} \PU(n)/\ol{\Stab}(\psi) \homeo  \PU(n)\cdot \psi,$$
which we treat as identifications.
So we will consider $\{U_j\}_j$ and $\{V_i\}_i$ as coverings of $\PU(n)/\ol{\Stab}(\psi)$ and $\PU(n)\cdot \psi$.

We claim that $\pi$ is trivial over each open set $U_j$.  In fact, we will show that $\pi$ is trivial over the closures of the $U_j$.  
Since $\ol{U_j} \subset V_i$ for some $i = i(j)$, if we set $\beta_j = \alpha_i$ then $\beta_j$ is a section of $q_\psi$ over $\ol{U_j}$.
We have continuous, fiber-preserving maps
$$\phi_j \co \overline{U_j} \cross \rHom_{\psi} \maps \pi^{-1} (\overline{U_j})$$
given by $\phi_j (u,  \rho) = \beta_j (u) \rho \beta_j (u)^{-1},$
which we claim are homeomorphisms.  

The spaces $\overline{U_j}$ and $\rHom_{\psi}$ are compact, since they are closed (respectively) in the compact spaces $\PU(n)/\ol{\Stab} (\psi)$ and $\rHom$ (Lemma \ref{proj-subvar}).
So the domain of $\phi_j$ is compact, and since its range is Hausdorff, it will suffice to check that $\phi_j$ is a bijection.

First we check that $\phi_j$ is surjective.  Consider a representation $\rho \in \pi^{-1} (\overline{U_j})$.  Then $\ol{\rho} = [X] \psi [X]^{-1}$ for some $X\in U(n)$ with $q_\psi (X) \in \ol{U_j}$ (where $\ol{U_j}$ is viewed as a subspace of $U(n)/\Stab(\psi)$).
Let $u = q_\psi (X)\in \ol{U_j}$.  
Now $\beta_j (u) = X K$ for some $K\in \Stab (\psi)$, and we have
$$[\beta_j (u)] \psi [\beta_j (u)]^{-1} = [X] [K] \psi [K]^{-1}  [X]^{-1} = [X] \psi [X]^{-1}  =\ol{\rho},$$ 
or in other words 
$\psi = [\beta_j (u)]^{-1} (\brho) [\beta_j (u)]$.  Now $\beta_j (u)^{-1} \rho \beta_j(u) \in \rHom_\psi$, and $\phi_j (u, \beta_j (u)^{-1} \rho \beta_j(u)) = \rho$.

Next, we check that $\phi_j$ is injective.  If $\phi_j (u, \rho) = \phi_j (u', \rho')$, then 
\begin{equation}\label{alpha}\beta_j(u')^{-1}  \beta_j(u)\rho \beta_j(u)^{-1}  \beta_j(u')= \rho'.\end{equation}
Since $\ol{\rho} = \ol{\rho'} = \psi$, we see that $\beta_j(u')^{-1}  \beta_j(u) \in \Stab (\psi)$.  But since $\beta_j$ is a section of $q_\psi$, this implies that $u' = u$, and by (\ref{alpha}), we have $\rho' = \rho$ as well, proving injectivity.

\end{proof}

We now study the individual fibers $\rHom_\psi \subset \rHom$ of the bundle from Proposition \ref{bundle}.  Each fiber admits a restriction map to $\Hom(A, S^1)$.  Note that when $A$ is a free abelian group, $\Hom(A, S^1)$ is a torus of dimension $\mathrm{rk} (A)$.  

\begin{proposition} $\label{finite-cover2}$ For each $\psi\co Q\to U(n)$, the restriction map 
$$R\co \rHom_\psi \maps \Hom(A, S^1)$$
is a (non-surjective) finite covering map with structure group $\Hom(Q, S^1)$.
\end{proposition}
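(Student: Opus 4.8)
The plan is to realize $R$ as the orbit map of a free action of the finite group $\Hom(Q,S^1)$ on $\rHom_\psi$, and then to identify the resulting orbit space with the image of $R$. Write $q\co\Gamma\to Q$ for the quotient homomorphism (so that $\psi\co Q\to\PU(n)$), and set $G:=\Hom(Q,S^1)$; note $G\cong\Hom(Q^{\mathrm{ab}},S^1)$ is finite since $Q$ is. For $\eta\in G$ and $\rho\in\rHom_\psi$, define $\eta\cdot\rho\co\Gamma\to U(n)$ by $(\eta\cdot\rho)(\gamma)=\eta(q(\gamma))\,\rho(\gamma)$. Because each scalar $\eta(q(\gamma))\in S^1$ is central in $U(n)$, $\eta\cdot\rho$ is again a homomorphism; because $\eta\circ q$ is trivial on $A$, we have $(\eta\cdot\rho)|_A=\rho|_A$, so $\eta\cdot\rho\in\rHom$; and because central scalars become trivial in $\PU(n)$, we get $\overline{\eta\cdot\rho}=\brho=\psi$. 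Thus $\eta\cdot\rho\in\rHom_\psi$ and $R(\eta\cdot\rho)=R(\rho)$, so this defines a continuous $G$-action preserving the fibers of $R$. It is free: if $\eta\cdot\rho=\rho$ then $\eta(q(\gamma))\rho(\gamma)=\rho(\gamma)$ for all $\gamma$, and since each $\rho(\gamma)$ is invertible and $q$ is surjective, $\eta=1$.

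Next I would show the orbits of $G$ are exactly the nonempty fibers of $R$. Suppose $\rho,\rho'\in\rHom_\psi$ with $\rho|_A=\rho'|_A$. For each $\gamma$ the unitary matrices $\rho(\gamma)$ and $\rho'(\gamma)$ have the same image $\psi(q(\gamma))$ in $\PU(n)$, so $\rho'(\gamma)=f(\gamma)\rho(\gamma)$ for a unique scalar $f(\gamma)\in S^1$. A short computation using centrality of $S^1$ and invertibility of the matrices shows $f\co\Gamma\to S^1$ is a homomorphism, and $f|_A=1$ because $\rho$ and $\rho'$ agree on $A$; hence $f=\eta\circ q$ for some $\eta\in G$, so $\rho'=\eta\cdot\rho$. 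Consequently every nonempty fiber of $R$ is a single $G$-orbit, hence a finite $G$-torsor.

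It remains to see that $R$ is a covering map onto its image. A free action of a finite group $G$ on a Hausdorff space is automatically properly discontinuous: given $\rho$, for each $\eta\neq1$ one uses the Hausdorff property to separate $\rho$ from $\eta\cdot\rho$, and intersecting suitable translates over the finitely many $\eta\neq1$ produces an open $U\ni\rho$ with $\eta U\cap U=\emptyset$ for all $\eta\neq1$. Hence the orbit map $\pi\co\rHom_\psi\to\rHom_\psi/G$ is a covering map with deck group $G$. By the previous paragraph, $R$ factors as $R=\bar R\circ\pi$ with $\bar R\co\rHom_\psi/G\to\Img(R)$ a continuous bijection; since $\rHom_\psi$ is compact (Lemma \ref{fibers}) and $\Hom(A,S^1)$ is Hausdorff, $R$ is closed, hence so is $\bar R$, and therefore $\bar R$ is a homeomorphism. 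Thus $R$ is a covering map onto $\Img(R)$ with structure (deck) group $G=\Hom(Q,S^1)$. Finally $R$ fails to be surjective: since $\rho(A)$ lies in the center of $U(n)$, $\rho(\gamma a\gamma^{-1})=\rho(\gamma)\rho(a)\rho(\gamma)^{-1}=\rho(a)$ for every $a\in A$ and $\gamma\in\Gamma$, so $\Img(R)$ is contained in the subgroup of $Q$-invariant characters of $A$, which is a proper subset of $\Hom(A,S^1)$ whenever $Q$ acts nontrivially on $A$ --- in particular in all the crystallographic cases of interest, where this action is faithful.

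The one step that requires genuine care is the last paragraph: recognizing $R$ as the orbit map of the twisting action and invoking the standard fact that free actions of finite groups on compact Hausdorff spaces produce covering maps. Everything else --- the verification that the action is well-defined and free, the transitivity argument identifying fibers with orbits, and the containment giving non-surjectivity --- is routine.
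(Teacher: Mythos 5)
Your proposal is correct and follows essentially the same route as the paper: twist by characters of $Q$ pulled back to $\Gamma$, check the action is free and that the fibers of $R$ are exactly the orbits, and use compactness of $\rHom_\psi$ (Lemma \ref{fibers}) plus Hausdorffness of the target to upgrade the continuous bijection $\overline{R}$ to a homeomorphism. The only additions are your explicit verification of proper discontinuity (which the paper leaves implicit) and the observation that the image lands in the $Q$--invariant characters, which the paper does not spell out.
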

\begin{proof}  The action of $\Hom(Q, S^1)$ on $\rHom_\psi$ is given by 
$$(\chi \cdot \rho)(\gamma) = \chi([\gamma])\rho(\gamma).$$
Since $\chi([\gamma])$ is central in $U(n)$, we have $\overline{\chi \cdot \rho} = \overline{\rho} = \psi$, and it also follows that $\chi\cdot \rho$ is a homomorphism with $(\chi \cdot \rho) (A) \subset S^1$.  So we have a well-defined action, which is free because $(\chi\cdot \rho) (\gamma) = \rho(\gamma)$ implies that $\chi([\gamma]) = 1$ for all $\gamma\in \Gamma$.  Hence the quotient map for this action is a covering map whose  structure group is the finite group $\Hom(Q, S^1)$.

If $a\in A$, then $(\chi\cdot\rho)(a) = \chi(1)\rho(a) = \rho(a)$, so the restriction map $R$ factors through the quotient space for this action.  We must show that the induced map
$$\overline{R}\co \left(\rHom_\psi\right)/\Hom(Q, S^1) \maps R\left(\rHom_\psi\right)$$
 is a homeomorphism.  
Recall that $\rHom_\psi$ is compact (Lemma \ref{fibers}), so the domain of $\overline{R}$ is compact as well.  Since the range of $\overline{R}$ is Hausdorff, to show that $\overline{R}$ is a homeomorphism we need only show that it is injective.

Say $\overline{R}(\rho) = \overline{R}(\rho')$.  Then we know that $\overline{\rho} = \overline{\rho'} = \psi$ and $\rho|_A = \rho'|_A$.  The first condition implies that for any $\gamma\in \Gamma$, we have $\rho(\gamma) = \lambda(\gamma) \rho'(\gamma)$, for some $\lambda(\gamma) \in S^1$, and the second condition implies that $\lambda(a) = 1$ if $a\in A$.  We simply need to check that $\lambda\co \Gamma \to S^1$ is a homomorphism.  For any $\gamma\in \Gamma$, we have $\lambda(\gamma) = \rho(\gamma) \rho'(\gamma)^{-1}$.  Now 
\begin{equation*}
\begin{split}
\lambda(\gamma_1 \gamma_2) &= \rho(\gamma_1 \gamma_2) \rho'(\gamma_1 \gamma_2)^{-1}  
								 = \rho(\gamma_1) \rho(\gamma_2) \rho'(\gamma_2)^{-1} \rho'(\gamma_1)^{-1}\\
						&     = \rho(\gamma_1) \lambda (\gamma_2)  \rho'(\gamma_1)^{-1}
								= \rho(\gamma_1)  \rho'(\gamma_1)^{-1} \lambda(\gamma_2)     = \lambda(\gamma_1) \lambda(\gamma_2).									     
\end{split}
\end{equation*}									     
\end{proof}

Next, we consider the image of the restriction map
$$R\co \Hom(\Gamma, U(n)) \to \Hom(A, U(n)).$$
Since $[\Gamma : A] < \infty$, the Schreier Index Formula implies that $A$ is finitely generated, so $\Hom(A, U(n))$ has the structure of an algebraic set, and $R$ is an algebraic map (in fact, $R$ is simply a projection).  The next result follows immediately from the Tarski--Seidenberg Theorem and Lemma \ref{fibers}.

\begin{proposition} \label{proj-var} For each homomorphism $\psi\co Q\to \PU(n)$, the image 
$$R(\rHom_\psi)\subset \Hom(A, S^1)$$
is a closed semi-algebraic subset.
\end{proposition}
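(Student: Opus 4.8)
The plan is to obtain this statement directly from the two ingredients named in the sentence preceding it: the fact, established in Lemma~\ref{fibers}, that $\rHom_\psi$ is a compact semi-algebraic subset of $\Hom(\Gamma, U(n))$, and the Tarski--Seidenberg Theorem (from Theorem~\ref{closure}), which preserves semi-algebraicity under direct images by semi-algebraic maps. So the only genuine content is to observe that the restriction map is semi-algebraic, and then assemble the pieces.

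First I would make the restriction map explicit as an algebraic map. Choosing finite generating sets for $\Gamma$ and for $A$ realizes both $\Hom(\Gamma, U(n))$ and $\Hom(A, U(n))$ as real algebraic subsets of finite products of copies of $U(n) \subset M_{2n}(\bbR)$. Writing each chosen generator of $A$ as a word in the generators of $\Gamma$, the matrix entries of $\rho|_A$ become polynomials in the matrix entries of the values of $\rho$ on the generators of $\Gamma$; hence $R\co \Hom(\Gamma, U(n)) \maps \Hom(A, U(n))$ is an algebraic map, in particular continuous and semi-algebraic. (This is the same kind of observation already used for the induction map in Section~\ref{ind}.)

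Next I would combine this with Lemma~\ref{fibers}. Since $\rHom_\psi$ is semi-algebraic, the restriction of $R$ to $\rHom_\psi$ is a semi-algebraic map out of a semi-algebraic set, so its image $R(\rHom_\psi)$ is a semi-algebraic subset of $\Hom(A, U(n))$ by the Tarski--Seidenberg Theorem. Every $\rho \in \rHom_\psi$ satisfies $\rho(A) \subset S^1$ by definition of $\rHom_\psi$, so this image is contained in the algebraic subvariety $\Hom(A, S^1) \isom (S^1)^{\mathrm{rk}(A)}$, and a semi-algebraic subset of the ambient Euclidean space which happens to lie in $\Hom(A, S^1)$ is also semi-algebraic as a subset of $\Hom(A, S^1)$. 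Finally, $\rHom_\psi$ is compact (again by Lemma~\ref{fibers}) and $R$ is continuous, so $R(\rHom_\psi)$ is compact; as $\Hom(A, S^1)$ is Hausdorff, the image is closed there.

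I do not expect a real obstacle: the argument is essentially bookkeeping. The two points deserving a word of care are verifying that $R$ is honestly semi-algebraic (which uses the choice of compatible generating sets so that $R$ is a polynomial map of matrix entries, not merely a set-theoretic restriction) and recording that ``closed'' is meant inside $\Hom(A, S^1)$, where it comes from compactness rather than from any algebraic consideration.
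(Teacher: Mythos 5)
Your proposal is correct and follows essentially the same route as the paper: the paper likewise observes that $R$ is an algebraic (indeed, after a suitable choice of generators, a projection) map, invokes Lemma~\ref{fibers} for compactness and semi-algebraicity of $\rHom_\psi$, and concludes via the Tarski--Seidenberg Theorem, with closedness coming from compactness of the image. Your extra care in writing generators of $A$ as words in generators of $\Gamma$ and in noting that the image lands in $\Hom(A,S^1)$ is sound bookkeeping that the paper leaves implicit.
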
 

%%%%%%%%%%%%%%%%%%%%%%%%%%%%%%%%%%%%%%%%%%%%%%%%%%%
%%%%%%%%%%%%%%%%%%%%%%%%%%%%%%%%%%%%%%%%%%%%%%%%%%%

\section{The moduli space of irreducible representations}$\label{proofs}$

In this section, $\Gamma$ will denote an infinite discrete group sitting in an extension
$$1 \maps A \maps \Gamma \maps Q\maps 1$$
with $A$ a free abelian group of rank $k>0$ and $Q$ a finite group.  (Recall from Section~\ref{ind-proj} that if $\Gamma$ is virtually $\bbZ^k$, then $\Gamma$ sits in such an extension.)

%%%%%%%%%%%%%%%%%%%%%%%%%%%%%%%%%%%%%%%%%%%%%%%%%%%

\subsection{Triangulations}$\label{triang}$

We now apply the results on triangulations in Section \ref{triang-sec} to spaces of representations.

\begin{theorem} $\label{ind-triang}$ 
The moduli space $\bHom_n (\Gamma)$ admits a triangulation in which each of the following subsets is a union of open simplices:
\begin{enumerate}
\item $\bSum_n(\Gamma)$,

\item $\left(\rHom_{[\psi]}\right)/U(n)$ with $\psi$ irreducible, and

\item $\bHom_n (\Gamma)_H$ with $[\Gamma : H] < \infty$.
\end{enumerate}

\end{theorem}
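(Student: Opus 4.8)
The plan is to invoke the triangulation theorem for compact semi-algebraic sets (Theorem~\ref{triang-thm}), applied to $X = \bHom_n(\Gamma)$ together with the finite family of semi-algebraic subsets listed in (1)--(3). So the entire content of the proof is to verify that each of these subsets is (a) semi-algebraic, and (b) that there are only finitely many of them, so that Theorem~\ref{triang-thm} genuinely applies.

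First I would recall that $\bHom_n(\Gamma)$ is compact and semi-algebraic by Corollary~\ref{bHom-sa}. For item (1), $\bSum_n(\Gamma)$ is a closed semi-algebraic subset by Lemma~\ref{loc-cpt}. For item (3), each $\bHom_n(\Gamma)_H$ is semi-algebraic by Corollary~\ref{Ind_H}; moreover, a subgroup $H$ with $A \leqslant H \leqslant \Gamma$ is determined by a subgroup of the finite group $Q = \Gamma/A$, so there are only finitely many such $H$, and for those $H$ of index not dividing $n$ the set is empty by Definition~\ref{ind-def}. (One should note that although the definition of $\bHom_n(\Gamma)_H$ only used subgroups $H$ with $A \leqslant H$, this suffices: by Theorem~\ref{Serre8.1} every non-projective irreducible is induced from such an $H$, which is all that is needed when these pieces are used in Section~\ref{proofs}; in any case, for any finite-index $H$ the set is semi-algebraic by the same argument, and there are only finitely many conjugacy classes to worry about — but since $\bInd_H^\Gamma$ depends only on $H$ up to conjugacy one can restrict attention to a finite list.) For item (2), the subspace $\rHom_{[\psi]}$ is a closed semi-algebraic subset of $\Hom(\Gamma,U(n))$: it is one of the finitely many clopen pieces in the decomposition of $\rHom$ from Corollary~\ref{disjoint}, and $\rHom$ itself is a subvariety of $\Hom(\Gamma,U(n))$ by Lemma~\ref{proj-subvar}. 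Since the quotient map $\Hom(\Gamma,U(n)) \to \bHom_n(\Gamma)$ is semi-algebraic (Corollary~\ref{bHom-sa}) and proper, the image $\left(\rHom_{[\psi]}\right)/U(n)$ is a semi-algebraic subset of $\bHom_n(\Gamma)$ by the Tarski--Seidenberg Theorem; and by Corollary~\ref{discrete} there are only finitely many isomorphism classes of irreducible $\psi\co Q \to \PU(n)$, so finitely many such subsets.

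Having assembled a finite family $Y_1,\ldots,Y_N$ of semi-algebraic subsets of the compact semi-algebraic set $\bHom_n(\Gamma)$, I would then apply Theorem~\ref{triang-thm} directly to obtain a semi-algebraic triangulation of $\bHom_n(\Gamma)$ in which each $Y_i$ — hence each of the subsets (1), (2), (3) — is a union of open simplices. Strictly speaking Theorem~\ref{triang-thm} as stated produces a triangulation by a simplicial complex (closed simplices), but a union of closed simplices is in particular a union of the corresponding open simplices once one passes to the associated open simplicial complex, so the statement follows in the "open triangulation" language of Definition~\ref{open}; this is the sense used throughout Section~\ref{triang}.

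The only genuine subtlety — and the step I expect to require the most care — is the bookkeeping in item (3): one must make sure that "$\bHom_n(\Gamma)_H$ with $[\Gamma:H]<\infty$" really does range over only finitely many distinct subsets of $\bHom_n(\Gamma)$. This is where one uses that $A$ has finite index, so $H$ must contain $A$ after replacing it by a conjugate (or, more carefully, that only finitely many conjugacy classes of finite-index subgroups are relevant because the induced-representation pieces coming from conjugate subgroups coincide, and because subgroups containing $A$ correspond to subgroups of the finite group $Q$). Everything else is a routine assembly of results already proven: semi-algebraicity of each piece, finiteness of the index set, and a single application of the semi-algebraic triangulation theorem.
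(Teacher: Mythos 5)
Your proposal is correct and follows essentially the same route as the paper: verify that the relevant subsets are semi-algebraic and feed them into Theorem~\ref{triang-thm}. The one place you diverge is item (2): the paper includes only the single set $\rHom/U(n)$ (semi-algebraic by Lemma~\ref{proj-subvar}) in the family handed to Theorem~\ref{triang-thm}, and then observes that the finitely many pieces $\left(\rHom_{[\psi]}\right)/U(n)$ are pairwise disjoint \emph{closed} subsets of it (Corollaries~\ref{discrete} and~\ref{disjoint}), so each open simplex of $\rHom/U(n)$, being connected, lies in exactly one piece and each piece is automatically a closed subcomplex. You instead prove each $\rHom_{[\psi]}$ is itself semi-algebraic and triangulate with respect to all of them; this works, but your justification (``a clopen piece of a semi-algebraic set is semi-algebraic'') tacitly uses that a semi-algebraic set has finitely many connected components, each semi-algebraic --- a true fact, but not one in the paper's stated toolkit. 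A cleaner route with the tools at hand is to note that $\rHom_{[\psi]}$ is the image of $U(n)\cross \rHom_\psi$ under the polynomial conjugation map, so Lemma~\ref{fibers} plus Tarski--Seidenberg gives semi-algebraicity directly. Your care about the finiteness of the family in item (3) is warranted (the paper's proof is silent on it); for fixed $n$ the set $\bHom_n(\Gamma)_H$ is empty unless $[\Gamma:H]$ divides $n$, and a finitely generated group has only finitely many subgroups of index at most $n$, which settles it.
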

\begin{proof}  It follows from Lemma \ref{loc-cpt}, Corollary \ref{Ind_H}, and Lemma \ref{proj-subvar} that $\bSum_n(\Gamma)$, $\rHom/U(n)$, and $\bHom_n (\Gamma)_H$ are all semi-algebraic subsets of $\bHom_n (\Gamma)$.  By Theorem \ref{triang-thm}, $\bHom_n (\Gamma)$ admits a triangulation in which each of these subsets is a union of open simplices.  
In fact, $\bSum_n (\Gamma)$ and $\rHom/U(n)$ are closed, so these must in fact be closed subcomplexes.
By Corollary \ref{discrete}, the closed subsets $\left(\rHom_{[\psi]}\right)/U(n) \subset \rHom/U(n)$ are topologically disjoint from one another, so each must be a closed subcomplex in its own right.
\end{proof}

\begin{proposition}$\label{k-diml}$ For any $\psi\co  Q\to \PU(n)$, the space
$\rHom_\psi$ is homeomorphic to a finite simplicial complex of dimension at most $k$.  
\end{proposition}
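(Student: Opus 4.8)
The idea is to transport a triangulation from the low‑dimensional target of the restriction map $R\co \rHom_\psi \maps \Hom(A, S^1)$ studied in Proposition \ref{finite-cover2}. If $\rHom_\psi = \emptyset$ there is nothing to prove, so assume it is non‑empty. The plan has two steps: first triangulate the compact semi‑algebraic set $R(\rHom_\psi)$, which lives inside a $k$--torus and so has dimension at most $k$; then lift this triangulation along $R$, which is a finite covering onto its image.

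\emph{Step 1: triangulating the image.} Since $A$ is free abelian of rank $k$, the space $\Hom(A, S^1) = \Hom(A, U(1))$ is the compact real algebraic variety $(S^1)^k$, a smooth $k$--manifold, hence of algebraic dimension $k$. By Proposition \ref{proj-var}, the image $R(\rHom_\psi)$ is a closed semi‑algebraic subset of $\Hom(A, S^1)$; it is therefore compact, and being contained in $(S^1)^k$ it has algebraic dimension at most $k$ (algebraic dimension does not increase under passage to a subset, since the coordinate ring of a subset is a quotient of that of the ambient set). By Theorem \ref{triang-thm} it admits a semi‑algebraic triangulation, and by Lemma \ref{dimn} the dimension of this triangulation equals its algebraic dimension. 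Hence $R(\rHom_\psi)$ is homeomorphic to a finite simplicial complex $L$ with $\dim L \leqs k$ (the inequality may be strict, as $R$ need not be surjective).

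\emph{Step 2: lifting along $R$.} By Proposition \ref{finite-cover2}, $R$ factors as the quotient map $\rHom_\psi \to \rHom_\psi/\Hom(Q, S^1)$ followed by the homeomorphism $\overline{R}$ onto $R(\rHom_\psi)$. The group $\Hom(Q, S^1)$ is finite (because $Q$ is) and acts freely on the Hausdorff space $\rHom_\psi$, so the quotient map is a finite--sheeted covering; composing with $\overline{R}$ identifies $R\co \rHom_\psi \to L$ with a finite--sheeted covering of the finite simplicial complex $L$. The total space of such a covering is again a finite simplicial complex of the \emph{same} dimension: the preimage of each closed simplex of $L$, being a covering of a simply connected space, is a disjoint union of finitely many copies each mapped homeomorphically onto that simplex, and these copies assemble (along the copies of the faces) into a finite simplicial complex in which every maximal simplex maps homeomorphically onto a maximal simplex of $L$. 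Therefore $\rHom_\psi$ is homeomorphic to a finite simplicial complex of dimension $\dim L \leqs k$, as claimed.

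The only point demanding genuine care is the combinatorial verification in Step 2 that the lifted simplices fit together into an honest simplicial complex and that no dimension is gained — this is standard covering--space theory. Everything else is bookkeeping with results already in hand: the structure of $R$ (Propositions \ref{finite-cover2} and \ref{proj-var}) together with the triangulation and dimension machinery of Section \ref{real} (Theorem \ref{triang-thm} and Lemma \ref{dimn}).
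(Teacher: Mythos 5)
Your proof is correct and follows essentially the same route as the paper: triangulate the closed semi-algebraic image $R(\rHom_\psi)\subset\Hom(A,S^1)\homeo (S^1)^k$ in dimension at most $k$, then lift the triangulation along the finite covering $R$ onto its image furnished by Proposition \ref{finite-cover2}. The only cosmetic difference is that the paper triangulates the ambient torus with $R(\rHom_\psi)$ as a subcomplex and bounds the dimension by the manifold dimension, whereas you triangulate the image directly and bound via algebraic dimension; both are valid.
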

\begin{proof} By Proposition \ref{finite-cover2}, $\rHom_\psi$ is a finite cover of
$$R \left( \rHom_\psi \right) \subset \Hom(A, S^1)$$
which is a closed, semi-algebraic subset (Proposition \ref{proj-var}).  
By Theorem \ref{triang-thm}, there exists a triangulation of $\Hom(A, S^1)$ with $R \left( \rHom_\psi \right)$ as a subcomplex.  Since $A$ is a free abelian group of rank $k$, $\Hom(A, S^1)\homeo (S^1)^k$.   Hence $\Hom(A, S^1)$ is a $k$--dimensional manifold, and any triangulation of $\Hom(A, S^1)$ must be $k$--dimensional.  

To complete the proof, note that any covering space of a $d$--dimensional simplicial complex is again a $d$--dimensional simplicial complex (see Seifert and Threlfall \cite[\S 55]{ST}, or Spanier \cite[\S 3.8, Theorem 3]{Spanier}.)
\end{proof}

%%%%%%%%%%%%%%%%%%%%%%%%%%%%%%%%%%%%%%%%%%%%%%%%%%%

\subsection{The moduli space}$\label{moduli}$

In this section, we prove our main result regarding the moduli space of irreducible representations.
We will need the following lemma, which is a simple consequence of Brouwer's Invariance of Domain theorem.

\begin{lemma} $\label{inv}$ Let $X$ be a topological space with open subsets $U, V \subset X$.  If $U\cap V\neq \emptyset$ and there exist  homeomorphisms $f \co U \srt{\homeo} \bbR^n$, $g\co V \srt{\homeo} \bbR^m$, then $n = m$.
\end{lemma}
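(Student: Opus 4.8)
The plan is to reduce the statement to the classical Invariance of Domain theorem for open subsets of Euclidean space. The point is that $U \cap V$ is a nonempty open subset of both $U \cong \bbR^n$ and $V \cong \bbR^m$, and Invariance of Domain will force the two dimensions to agree.

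First I would pick a point $x \in U \cap V$. Since $f$ is a homeomorphism $U \srt{\homeo} \bbR^n$, the image $f(U \cap V)$ is an open subset of $\bbR^n$; shrink it to a small open ball $B$ around $f(x)$ with $B \subseteq f(U \cap V)$, and let $W = f^{-1}(B) \subseteq U \cap V$. Then $W$ is an open neighborhood of $x$ in $X$, it is open in both $U$ and $V$ (being open in $U \cap V$, which is open in $X$), and $f|_W \co W \srt{\homeo} B \homeo \bbR^n$. On the other hand, $g|_W \co W \to g(W) \subseteq \bbR^m$ is an embedding of $W$ as a subset of $\bbR^m$; composing with $(f|_W)^{-1}$ gives a continuous injection $h = g \circ (f|_W)^{-1} \co B \to \bbR^m$.

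Now I would apply Brouwer's Invariance of Domain: if $m > n$, regard $B \subseteq \bbR^n$ as sitting inside $\bbR^m$ via the standard inclusion $\bbR^n \subseteq \bbR^m$; the resulting continuous injection $B \to \bbR^m$ would then, by Invariance of Domain, have open image in $\bbR^m$, contradicting the fact that its image lies in the nowhere-dense subspace $\bbR^n \times \{0\}$. So $m \leqs n$. By the symmetric argument (swapping the roles of $f$ and $g$, $U$ and $V$, $n$ and $m$) we get $n \leqs m$, hence $n = m$.

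The only point requiring a little care — and the step I expect to be the mild obstacle — is making the application of Invariance of Domain clean: Invariance of Domain is usually stated as ``a continuous injection from an open subset of $\bbR^n$ to $\bbR^n$ is an open map.'' To compare $\bbR^n$ and $\bbR^m$ one has to feed the map into $\bbR^{\max(n,m)}$ and argue that an injection into a lower-dimensional coordinate subspace cannot be open, which is exactly where one invokes that $\bbR^n \times \{0\}$ has empty interior in $\bbR^m$ when $m > n$. This is entirely standard, so no genuine difficulty arises; I would simply cite a reference for Invariance of Domain (e.g. Munkres or Bredon) and spell out the coordinate-subspace argument in one sentence.
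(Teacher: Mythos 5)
The paper states this lemma without proof, remarking only that it is ``a simple consequence of Brouwer's Invariance of Domain theorem,'' and your overall reduction --- pass to the nonempty open set $U\cap V$, transport it into both Euclidean spaces, and compare --- is exactly the intended route. However, the step you yourself flagged as the delicate one is where your argument breaks. Assuming $m>n$, you apply Invariance of Domain to a continuous injection $B\to\bbR^m$ with $B$ open in $\bbR^n$, the \emph{smaller} space. Invariance of Domain requires the domain to be an open subset of the target Euclidean space $\bbR^m$, and $B$ (equivalently $B\times\{0\}$) is not open in $\bbR^m$ when $n<m$; indeed the standard inclusion $B\hookrightarrow\bbR^n\times\{0\}\subset\bbR^m$ is a continuous injection with non-open image and contradicts nothing. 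And if the injection you intend is instead $h=g\circ(f|_W)^{-1}$, then its image is $g(W)\subset\bbR^m$, which need not lie in $\bbR^n\times\{0\}$ at all, so the claimed contradiction does not arise under either reading.

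The repair is to run the map in the other direction, and for that you need a fact you never record: $g(W)$ is open in $\bbR^m$, because $W$ is open in $V$ and $g\co V\srt{\homeo}\bbR^m$ is a homeomorphism. Then, still assuming $m>n$, the composite $g(W)\xrightarrow{(g|_W)^{-1}}W\xrightarrow{f|_W}\bbR^n\hookrightarrow\bbR^m$ is a continuous injection defined on a nonempty open subset of $\bbR^m$, so by Invariance of Domain its image is open in $\bbR^m$; but that image lies in $\bbR^n\times\{0\}$, which has empty interior in $\bbR^m$. This is the contradiction, and the symmetric argument rules out $n>m$. With that one reversal (and the observation that $f(U\cap V)$ and $g(U\cap V)$ are both open in their respective Euclidean spaces --- the ball $B$ and the point $x$ are not actually needed), the proof is complete.
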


\begin{theorem}$\label{moduli-k-diml}$ For any $n>0$, 
the one-point compactification $\bIrrp_n (\Gamma)$ of the moduli space of irreducible $U(n)$--representations of $\Gamma$ is homeomorphic to a CW complex of dimension at most $k = \rk (A)$.  
\end{theorem}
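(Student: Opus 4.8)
The plan is to assemble $\bIrrp_n(\Gamma)$ from the pieces identified in Theorem~\ref{Serre8.1}: the irreducible representations that come from projective representations of $Q$, and those induced from proper subgroups $H$ with $A \leqs H$. First I would invoke Theorem~\ref{ind-triang} to fix a semi-algebraic triangulation of $\bHom_n(\Gamma)$ in which $\bSum_n(\Gamma)$, each $(\rHom_{[\psi]})/U(n)$ (for $\psi$ irreducible), and each $\bHom_n(\Gamma)_H$ (for the finitely many conjugacy classes of subgroups $H$ of the relevant indices) are all unions of open simplices. Passing to the quotient $\bIrrp_n(\Gamma) = \bHom_n(\Gamma)/\bSum_n(\Gamma)$ collapses the closed subcomplex $\bSum_n(\Gamma)$ to the basepoint, and the complement of the basepoint, namely $\bIrr_n(\Gamma)$, is then a union of the remaining open simplices. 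By Theorem~\ref{Serre8.1} (as reformulated in Section~\ref{ind-proj}), every point of $\bIrr_n(\Gamma)$ lies either in some $(\rHom_{[\psi]})/U(n)$ with $\psi$ irreducible, or in some $\bHom_n(\Gamma)_H$ with $H$ proper, $A \leqs H$, $[\Gamma:H] \mid n$. So it suffices to bound the dimension of each of these two families of semi-algebraic subsets by $k$, and then conclude that the full triangulated complex $\bIrrp_n(\Gamma)$ has dimension $\leqs k$.

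For the projective pieces: by Corollary~\ref{discrete} there are only finitely many irreducible $\psi$ up to $\PU(n)$-conjugacy, so it is enough to bound $\dim\big((\rHom_{[\psi]})/U(n)\big)$ for each. By Proposition~\ref{bundle}, $\rHom_{[\psi]}$ fibers over the $\PU(n)$-orbit of $\psi$ with fiber $\rHom_\psi$, and Proposition~\ref{k-diml} says $\rHom_\psi$ is a finite simplicial complex of dimension $\leqs k$. I would like to say that quotienting by $U(n)$ cancels the orbit direction and leaves something homeomorphic to $\rHom_\psi$ modulo the (finite) residual stabilizer action — indeed the $U(n)$-orbit of any $\rho \in \rHom_{[\psi]}$ meets $\rHom_\psi$ in a single $\Stab(\psi)$-orbit, and $\Stab(\psi)/S^1 = \ol{\Stab}(\psi)$ acts on $\rHom_\psi$, so $(\rHom_{[\psi]})/U(n) \homeo \rHom_\psi/\ol{\Stab}(\psi)$, which is a semi-algebraic quotient of a $\leqs k$-dimensional semi-algebraic set by a compact group and hence, by Theorem~\ref{Schwarz} and Theorem~\ref{alg-tame}, still has algebraic dimension $\leqs k$. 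Combined with Lemma~\ref{dimn}, any semi-algebraic triangulation of this piece is $\leqs k$-dimensional. For the induced pieces: $\bHom_n(\Gamma)_H = \bInd_H^\Gamma(\bIrr_{n/m}(H))$ where $m = [\Gamma:H]$, and $\bInd_H^\Gamma$ is a (semi-algebraic, by Proposition~\ref{Ind-sa}) surjection from $\bIrr_{n/m}(H)$ onto $\bHom_n(\Gamma)_H$. Now $H$ sits in an extension $1 \to A \to H \to H/A \to 1$ with $A \isom \bbZ^k$ and $H/A$ finite, so $H$ satisfies the same hypotheses as $\Gamma$ with the \emph{same} $k$; by the inductive hypothesis on $|H/A| < |Q|$ (the base case $|Q| = 1$ being $\Gamma = A \isom \bbZ^k$, which has no irreducible representations for $n > 1$ and gives the torus $(S^1)^k$ for $n = 1$, hence $\bIrr_n(\bbZ^k)$ is empty or $k$-dimensional), $\bIrrp_{n/m}(H)$ has a CW structure of dimension $\leqs k$; therefore $\bIrr_{n/m}(H)$ has semi-algebraic dimension $\leqs k$, and by Theorem~\ref{alg-tame} (or Corollary~\ref{tame}) the semi-algebraic image $\bHom_n(\Gamma)_H$ has dimension $\leqs k$ as well.

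Having bounded the dimension of every open simplex of $\bIrr_n(\Gamma)$ by $k$, the triangulation of $\bHom_n(\Gamma)$ restricted to these simplices, together with the collapsed basepoint, exhibits $\bIrrp_n(\Gamma)$ as a CW complex (taking the basepoint as a $0$-cell and attaching the simplices of $\bIrr_n(\Gamma)$) of dimension $\leqs k$. I expect the main obstacle to be the bookkeeping in the induction: one must make sure the inductive hypothesis is set up for the class of all groups that are virtually $\bbZ^k$ (with $k$ fixed), check that the proper subgroups $H \supseteq A$ arising in Theorem~\ref{Serre8.1} genuinely have smaller point group, and verify that dimension counts that hold at the level of algebraic dimension transfer correctly to the dimension of a CW/simplicial structure — this is exactly what Lemma~\ref{dimn} and Corollary~\ref{tame} are for, so the pieces are in place, but the argument must be threaded carefully. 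A secondary point requiring care is the identification $(\rHom_{[\psi]})/U(n) \homeo \rHom_\psi/\ol{\Stab}(\psi)$ and the claim that this is realized semi-algebraically, so that Theorem~\ref{Schwarz} applies; alternatively one can avoid this by triangulating $\bHom_n(\Gamma)$ directly as in Theorem~\ref{ind-triang} and bounding the dimension of the subcomplex $(\rHom_{[\psi]})/U(n)$ by comparing, via Corollary~\ref{tame}, with the semi-algebraic surjection $\rHom_\psi \to (\rHom_{[\psi]})/U(n)$ obtained by composing the inclusion $\rHom_\psi \hookrightarrow \rHom_{[\psi]}$ with the quotient map.
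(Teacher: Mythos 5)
Your overall architecture matches the paper's: fix a triangulation of $\bHom_n(\Gamma)$ as in Theorem~\ref{ind-triang}, induct on $|Q|$, handle the induced pieces $\bHom_n(\Gamma)_H$ via the inductive hypothesis together with Proposition~\ref{Ind-sa} and Corollary~\ref{tame}, and then bound the projective pieces $\left(\rHom_{[\psi]}\right)/U(n)$ separately. The induced-representation half of your argument, the base case, and the final assembly of the CW structure by collapsing $\bSum_n(\Gamma)$ are all in line with the paper. Your identification $\left(\rHom_{[\psi]}\right)/U(n)\homeo \rHom_\psi/\ol{\Stab}(\psi)$ is also correct (compact source, Hausdorff target, continuous bijection).

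The gap is in how you bound the dimension of the projective pieces. You want to apply Theorem~\ref{Schwarz}, Theorem~\ref{alg-tame} and Corollary~\ref{tame} to the surjection out of $\rHom_\psi$, and for that you need $\rHom_\psi$ to have \emph{semi-algebraic} (algebraic) dimension at most $k$, or to admit a \emph{semi-algebraic} open triangulation of dimension at most $k$. But Proposition~\ref{k-diml} does not deliver this: it produces a simplicial structure on $\rHom_\psi$ by lifting a triangulation of $R(\rHom_\psi)\subset\Hom(A,S^1)$ through the covering map of Proposition~\ref{finite-cover2}, and that lift is only a homeomorphism, not a semi-algebraic one. Since algebraic dimension is controlled in the paper only via semi-algebraic maps and semi-algebraic triangulations (Lemma~\ref{dimn}, Theorem~\ref{alg-tame}), you cannot transfer the topological bound on $\rHom_\psi$ into the semi-algebraic dimension count without an additional input — e.g.\ the fact that finite-to-one semi-algebraic maps do not decrease dimension, or that the algebraic dimension of a semi-algebraic set equals its topological dimension; neither is part of the toolkit assembled in Section~\ref{real}. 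The paper circumvents exactly this issue by arguing topologically: a maximal open $d$--simplex of $\left(\rHom_{[\psi]}\right)/U(n)$ pulls back (via Gleason's slice theorem, using that irreducibles have stabilizer $S^1$) to an open subset of $\rHom_{[\psi]}$ homeomorphic to $\bbR^{d+n^2-1}$, while the bundle structure of Proposition~\ref{bundle} over $\PU(n)/\ol{\Stab}(\psi)$ with fiber $\rHom_\psi$ yields a dense set of points with Euclidean neighborhoods of dimension at most $k+n^2-1$; invariance of domain (Lemma~\ref{inv}) then forces $d\leqs k$. Your argument can be repaired either by importing the missing real-algebraic fact with a citation, or by switching to this invariance-of-domain comparison, which uses only the topological content of Proposition~\ref{k-diml}.
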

\begin{proof}  Consider a triangulation of $\bHom_n (\Gamma)$ as in Theorem \ref{ind-triang}.  We will prove, by induction on $|Q|$, that any open simplex of dimension greater than $k$ in such a triangulation must lie in $\bSum_n (\Gamma)$.  Modding out the closed subcomplex $\bSum_n (\Gamma)$ then yields the desired CW structure on $\bIrrp_n (\Gamma)$.

If $|Q| = 1$, then $\Gamma = A\isom \bbZ^k$, so $\bIrrp_n (\Gamma)$ is a point for $n>1$ (Proposition~\ref{max-dim}).  When $n=1$, $\bHom(A, S^1)  \homeo (S^1)^k$ and 
$\bIrrp_1 (A) = \Hom(A, S^1)_+$.  

We now assume the result for all extensions
$A' \maps \Gamma' \maps Q'$
(with $A'$ free abelian of finite rank) such that $|Q'|<|Q|$.  In particular, we assume that for each subgroup $H < \Gamma$ with $A\leqs H$ and $n | [\Gamma: H]$, and each semi-algebraic triangulation of $\bHom_{n/[\Gamma : H]} (H)$ as in Theorem \ref{ind-triang}, all simplices of dimension greater than $k$ lie in the subspace of reducible representations.

Let $f\co K \srt{\homeo} \bHom_n (\Gamma)$ be a triangulation as in Theorem \ref{ind-triang}.  
We begin by proving that any simplex of dimension greater than $k$ must lie either in $\bSum_n (\Gamma)$ or in $\left( \rHom_{[\psi]} \right)/U(n)$ for some irreducible $\psi$.  
By Lemmas \ref{Serre8.1} and \ref{irred-proj}, the complement of these subcomplexes is contained in the union, over $A\leqs H<\Gamma$ with $n | [\Gamma : H]$, of the subspaces 
$$\bHom_n (\Gamma)_H = \bInd_H^\Gamma (\bIrr_{n/[\Gamma : H]} (H)).$$ 
Our induction hypothesis gives a semi-algebraic triangulation of $\bHom_{n/[\Gamma : H]} (H)$ in which the subspace of irreducibles is a union of open simplices of dimension at most $k$.  Since $\bInd_H^\Gamma$ is a semi-algebraic map (Proposition \ref{Ind-sa}) and semi-algebraic maps do not increase dimension (Corollary \ref{tame}) , it follows that the semi-algebraic open triangulation of $\bHom_n (\Gamma)_H$ induced by our triangulation $f$ is at most $k$ dimensional.

Hence any simplex of $\bHom_n (\Gamma)$ (in the triangulation $f$) with dimension greater than $k$ must lie either in $\bSum_n (\Gamma)$ or in $\left( \rHom_{[\psi]} \right)/U(n)$ for some irreducible $\psi$, and we must rule out the latter possibility.

Let $\sigma\subset \left(\rHom_{[\psi]}\right)/U(n)$ ($\psi$ irreducible) be a maximal \emph{open} simplex of this subcomplex.  We need to show that  the dimension $d$ of $\sigma$ is at most $k$.
Each point in $\sigma$ corresponds to an irreducible representation $\rho$, and Schur's Lemma tells us that the stabilizer of $\rho$ in $U(n)$ is just $S^1$.

The inverse image $\wt{\sigma}$ of $\sigma$ in $\rHom_{[\psi]} \subset \Hom(\Gamma, U(n))$ is a metric space with a free action of the compact Lie group $PU(n)$, so Gleason's slice theorem \cite{Gleason} implies that the projection $\wt{\sigma} \to \sigma$ is a principal $PU(n)$--bundle.  Thus each point $x\in \sigma$ has open subset $U \subset \sigma$ whose inverse image in $\rHom_{[\psi]}$ is an open subset of $\rHom_{[\psi]}$ homeomorphic to $U \cross PU(n)$ (note that since $\sigma$ is maximal, it is open in $\left(\rHom_{[\psi]}\right)/U(n)$).  Choosing $x$ in the interior of $\sigma$, and shrinking $U$ if necessary, we may assume that $U$ is homeomorphic to $\bbR^d$.  Since $PU(n)$ is a manifold of dimension $n^2 -1$, the inverse image of $U$ contains an open subset of $\rHom_{[\psi]}$ homeomorphic to $\bbR^{d+n^2 - 1}$.

We have shown that $\rHom_{\psi}$ is homeomorphic to a simplicial complex of dimension at most $k$ (Proposition \ref{k-diml}). Moreover, by Proposition~\ref{bundle}, $\rHom_{[\psi]}$ is a locally trivial fiber bundle over the manifold $\PU(n)/\Stab(\psi)$ with fiber $\rHom_{\psi}$.  Since $\PU(n)/\Stab(\psi)$ has dimension at most $n^2-1$, we can cover $\PU(n)/\Stab(\psi)$ by open subsets homeomorphic to $\bbR^l$ for some $l\leqs n^2-1$, over which $\rHom_{[\psi]}$ is trivial.  Each fiber $\rHom_{\psi}$ contains a dense subset (the maximal open simplices) in which each point has a neighborhood (open in the fiber) homeomorphic to some $\bbR^p$ with $p\leqs k$.  Hence 
we can find dense subset $D$ of $\rHom_{[\psi]}$ such that each point in $D$ has a neighborhood in $\rHom_{[\psi]}$ which is homeomorphic to $\bbR^{j}$ for some $j\leqs k+n^2 - 1$.  Above, we found an open subset of $\rHom_{[\psi]}$ homeomorphic to $\bbR^{d+n^2 - 1}$.  By Lemma~\ref{inv},   
$d+n^2 - 1 = j$, and since $j \leqs k+ n^2 -1$, we conclude that $d \leqs k$.
\end{proof}

\begin{remark} $\label{max-dim'l}$ The bound in Theorem~\ref{moduli-k-diml} is in fact optimal, in the following sense.  If $\Gamma$ has a normal subgroup $A\isom \bbZ^k$ of index $q$, then $\bIrrp_q (\Gamma)$ is a CW complex of dimension exactly $k$.  This can be proven as follows.  The group $\Gamma/A$ acts by conjugation on $ \Hom(A, S^1)$, and Frobenius Reciprocity implies that $\Ind_A^\Gamma (\chi)$ is irreducible if and only if $\Gamma/A$ acts freely on the orbit of $\chi$.  Moreover, the union $U$ of all such orbits is   open in $\Hom(A, S^1)\isom (S^1)^k$.  Let $\bIrr_q (\Gamma)_A \subset \bIrr_q (\Gamma)$ denote those irreducibles induced from $A$.  Applying Frobenius Reciprocity again, one may identify $\bInd\co U \surjects \bIrr_q (\Gamma)_A$ with the quotient map for the action of $\Gamma/A$, which is a covering map since $\Gamma/A$ acts freely on $U$.  (Some care is required here.  To check that the continuous bijection $f\co U/(\Gamma/A)\to \bIrr_q (\Gamma)_A$ is closed, one considers the extension $\tilde{f} \co \Hom(A, S^1)/(\Gamma/A)\to \bHom_q (\Gamma)$ and uses the facts that $\Hom(A, S^1)/(\Gamma/A)$ is compact  and $\tilde{f}^{-1} (\bIrr_q (\Gamma)_A) = U$).  It follows that $\Irr_q (\Gamma)$ contains an open set homeomorphic to $\bbR^k$.  Further details are left to the reader.
\end{remark}

%%%%%%%%%%%%%%%%%%%%%%%%%%%%%%%%%%%%%%%%%%%%%%%%%%%
%%%%%%%%%%%%%%%%%%%%%%%%%%%%%%%%%%%%%%%%%%%%%%%%%%%
 
\section{Periodicity in stable representation theory}$\label{period}$

Let $\Gamma$ be a finite extension of a free abelian group $A\isom \bbZ^k$.
We now combine our results on the moduli space $\bIrrp_n (\Gamma)$ with Lawson's work on deformation $K$--theory \cite{Lawson-prod-form, Lawson-simul} to show that $\K (\Gamma)$ is $2$--periodic above dimension $k-2$.  We begin with some definitions and results from \cite{Lawson-prod-form, Lawson-simul}.
For a discussion of the relevant background on ring and module spectra, see Ramras \cite[\S6]{Ramras-stable-moduli}.

Given an abelian topological monoid $A$, there is an associated spectrum $\Sp (A)$, constructed as follows.  Let $BA$ denote the (simplicial) classifying space of $A$.  Since $A$ is  abelian, the multiplication map $A\cross A\to A$ is a  homomorphism of monoids, and therefore yields a multiplication $BA \cross BA \homeo B (A\cross A) \to BA$ making $BA$ into an abelian topological monoid.  The spaces in the spectrum $\Sp (A)$ are then
$$\Omega BA,\,\,\, BA, \,\,\,BBA,\, \ldots,$$ 
where $\Omega$ denotes the based loop space.  Note that if $M$ is a homotopy commutative topological monoid with $\pi_0 M$ a group, then the natural map $M\to \Omega BM$ is a weak equivalence (see, for example,~\cite[\S 1]{May-EGP}).  Since $B^k A$ is path connected for $k>0$, we conclude that the spectrum $\Sp (A)$ is an $\Omega$--spectrum.

\begin{definition} $\label{Rep}$ The \emph{deformation representation ring} of $\Gamma$, denoted $\Rdef (\Gamma)$, is the spectrum $\Sp ( \Rep (\Gamma))$ associated to the topological abelian monoid
$$\Rep (\Gamma) \mathrel{\mathop :}= \coprodmo_{n=0}^\infty \bHom_n (\Gamma).$$
\end{definition}

\begin{theorem}[Lawson] $\label{Bott}$ There is a homotopy cofiber sequence of spectra
\begin{equation} \label{Bott-seq}\Susp^2 \K (\Gamma) \srm{\beta} \K (\Gamma) \maps \Rdef (\Gamma),
\end{equation}
where $\beta$ denotes the Bott map in deformation $K$--theory.
\end{theorem}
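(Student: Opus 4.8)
The plan is to give concrete infinite loop space models for the two spectra and to identify $\Rdef(\Gamma)$ with the result of killing the Bott class on $\K(\Gamma)$. Recall that $\K(\Gamma)$ is the $K$-theory spectrum of the topological permutative category of finite-dimensional unitary $\Gamma$-representations; equivalently, it is the spectrum $\Sp(M)$ attached to the topological abelian monoid obtained by group-completing $\coprodmo_{n}\Hom(\Gamma,U(n))\times_{U(n)}EU(n)$ under block sum, while $\Rdef(\Gamma)=\Sp(\Rep(\Gamma))$ with $\Rep(\Gamma)=\coprodmo_{n}\bHom_n(\Gamma)$ and $\bHom_n(\Gamma)=\Hom(\Gamma,U(n))/U(n)$. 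The first thing I would record is that $\Rdef(\Gamma)$ is a \emph{generalized Eilenberg--MacLane spectrum} (GEM): for $k\geqs 1$ the deloopings $B^k\Rep(\Gamma)$ are path-connected topological abelian monoids, hence grouplike, hence homotopy equivalent to topological abelian groups and therefore to products of Eilenberg--MacLane spaces. In particular $\Rdef(\Gamma)$ is a module spectrum over $H\bbZ$, and hence over $\ku$ via the truncation $\ku\maps H\bbZ$.

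Next I would set up the two maps in (\ref{Bott-seq}). Tensoring a trivial-action Hermitian vector space with a $\Gamma$-representation is a symmetric-monoidal bilinear operation, so it makes $\K(\Gamma)$ into a module spectrum over $\ku=\K(\{1\})$ (connective complex $K$-theory), and $\beta$ is multiplication by the Bott generator $u\in\pi_2\ku\isom\bbZ$. Consequently the homotopy cofiber of $\beta$ is $\K(\Gamma)\wedge_{\ku}(\ku/u)\heq\K(\Gamma)\wedge_{\ku}H\bbZ$, using the classical identification $\ku/u\heq H\bbZ$; thus the theorem is equivalent to the statement $\K(\Gamma)\wedge_{\ku}H\bbZ\heq\Rdef(\Gamma)$. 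The comparison map comes from the natural projections $\Hom(\Gamma,U(n))\times_{U(n)}EU(n)\maps\Hom(\Gamma,U(n))/U(n)$ from the Borel construction to the orbit space; these are compatible with block sum, so after group completion they induce a $\ku$-linear map of spectra $\epsilon\co\K(\Gamma)\maps\Rdef(\Gamma)$ (the $\ku$-action on the target factoring through $H\bbZ$). Since $u$ dies in $\pi_2 H\bbZ=0$ and $\Rdef(\Gamma)$ is an $H\bbZ$-module, $\epsilon\circ\beta$ is nullhomotopic, and $\epsilon$ factors through a canonical map $c\co\K(\Gamma)\wedge_{\ku}H\bbZ\maps\Rdef(\Gamma)$.

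The crux is to show that $c$ is an equivalence. Both spectra are connective, so it is enough to see that $c$ induces an isomorphism on homotopy groups, or (since both are connective $H\bbZ$-modules) on integral homology. I would attack this one dimension at a time: filter $\K(\Gamma)$ by the submonoids of representations of dimension $\leqs n$ and reduce to understanding, for each $n$, the effect of smashing with $H\bbZ$ over $\ku$ on the contribution of the $U(n)$-action on $\Hom(\Gamma,U(n))$, comparing it with the strict orbit space. Over the locus of irreducibles the Borel construction already computes the orbit space (modulo the central $S^1$), so there is nothing to prove; over the reducible locus the map $\Hom(\Gamma,U(n))_{hU(n)}\maps\bHom_n(\Gamma)$ is a fiber bundle whose fibers are classifying spaces of the stabilizer subgroups (products of circles, after dividing by $\PU(n)$), and one must check that $-\wedge_{\ku}H\bbZ$ collapses these $BU$-type fibers. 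This is a \emph{relative} form of Bott periodicity which degenerates, when $\Gamma$ is trivial, to the classical computation $\Sp(\coprodmo_n BU(n))/\beta\heq H\bbZ\heq\Sp(\bbN)$; I would deduce the general case from the trivial-group case by a Mayer--Vietoris / induction argument along a $U(n)$-invariant semi-algebraic stratification of $\Hom(\Gamma,U(n))$ by orbit type.

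I expect the main obstacle to be exactly this last identification. The naive picture that ``killing Bott collapses $\mathbb{CP}^\infty$-fibers'' is false at the space level --- the integral homology of $\mathbb{CP}^\infty$ is unbounded --- and it only becomes correct after group completion and in the presence of the $\bbN$-grading by dimension, so the argument must be carried out honestly at the level of spectra and must interact with the block-sum structure. A secondary, more technical point is to make the $\ku$-linearity of $\epsilon$ and the multiplicativity of all the intervening comparison maps precise enough --- at the level of a coherent (e.g. $E_\infty$) monoidal structure --- that the dimension-by-dimension homological statements assemble into an honest equivalence of spectra rather than a mere family of isomorphisms. Everything else (the model for $\K(\Gamma)$, the $\ku$-module structure, the GEM property of $\Rdef(\Gamma)$, and the construction of $c$) should be routine.
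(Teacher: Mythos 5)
First, a point of reference: the paper does not prove this statement. It is quoted from Lawson \cite{Lawson-simul} (with the $\ku$--module structure coming from \cite{Lawson-prod-form}), so there is no in-paper argument to compare against. Judged on its own terms, your reduction of the theorem to the assertion that the canonical map $c\co \K(\Gamma)\wedge_{\ku}H\bbZ\to\Rdef(\Gamma)$ is an equivalence is the right frame and is faithful to the shape of Lawson's actual proof; the GEM property of $\Rdef(\Gamma)$, the $\ku$--linearity of the Borel-to-orbit comparison, and the factorization through the cofiber of $\beta$ are all fine (modulo the slip that $\coprodmo_n\Hom(\Gamma,U(n))\times_{U(n)}EU(n)$ under block sum is only an $E_\infty$--monoid, not a strictly abelian one, so the construction $\Sp(-)$ as defined in the paper does not literally apply to it).

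The gap is at the step you yourself identify as the crux, and it is twofold. First, the claim that ``over the locus of irreducibles the Borel construction already computes the orbit space\dots so there is nothing to prove'' is wrong: the $U(n)$--stabilizer of an irreducible is the central $S^1$, so over the irreducible locus the homotopy quotient fibers over the orbit space with fiber $BS^1=\mathbb{CP}^\infty$, and once multiplicities are taken into account the relevant fibers are $\coprod_m BU(m)$, since by Schur's Lemma the stabilizer of $m\rho$ is $U(m)$ embedded via $X\mapsto X\otimes I$. Far from being trivial, this locus is exactly where the content of the theorem lives: the collapse of these $BU(m)$'s after group completion and killing Bott \emph{is} the theorem. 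Second, the proposed repair --- Mayer--Vietoris over a $U(n)$--invariant orbit-type stratification within each fixed $n$ --- does not interact with the block-sum structure, which is what one needs in order to invoke group completion and to identify the layers. Lawson's proof instead filters $\Rep(\Gamma)$ and the corresponding permutative category globally by the maximal dimension of the irreducible summands; the associated graded pieces are then \emph{free} --- $\Sym^\infty(\bIrrp_n(\Gamma))$ on the representation-ring side and the free $\ku$--module on $\bIrrp_n(\Gamma)$ on the $K$--theory side --- and the map between corresponding layers is smashing $\bIrrp_n(\Gamma)$ with $\ku\to H\bbZ$, whose fiber is Bott multiplication. (The representation-ring half of this filtration is exactly what the paper records as Theorem \ref{cofiber}.) Without replacing your orbit-type stratification by a filtration with free layers of this kind, the dimension-by-dimension step does not go through.
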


Here $\Sigma^2$ denotes the second suspension, and the Bott map $\beta$ is induced from the ordinary Bott map $\beta\co  \Susp^2 \ku \to \ku$ (in connective $K$--theory) by smashing over $\ku$ with $\K(\Gamma)$.  This uses the $\ku$--module structure on $\K(\Gamma)$ constructed in \cite{Lawson-prod-form}.
We note that there is an Atiyah--Hirzebruch style spectral sequence, which we call the Bott spectral sequence, arising from the tower of fibrations
\begin{equation} \label{Bott-tower} \cdots \srm{\Susp^4 \beta} \Susp^4 \K (\Gamma_k) \cdots \srm{\Susp^2 \beta} \Susp^2 \K (\Gamma_k) \srm{\beta} \K(\Gamma_k).
\end{equation}

We say that $\K (\Gamma)$ is \emph{periodic} in dimension $*$ if the map 
$$\beta_*\co \pi_{*} \K(\Gamma)\isom  \pi_{*+2}  \Susp^2 \K(\Gamma) \maps \pi_{*+2} \K (\Gamma)$$
is an isomorphism.  Theorem \ref{Bott} shows that periodicity of $\K (\Gamma)$ is controlled by the homotopy groups $\pi_* \Rdef (\Gamma)$.  These homotopy groups are in turn linked to the homology of the moduli space $\bIrrp_n (\Gamma)$ by the next result.

\begin{theorem}[Lawson] $\label{cofiber}$
There is a tower of fibration sequences of spectra
$$\xymatrix{ \cdots \ar[r] & \Sp (\Rep (\Gamma, n-1)) \ar[r] \ar[d]^{q_{n-1}} & \Sp  (Rep (\Gamma, n)) \ar[r] \ar[d]^{q_n} & \cdots \\
			& 		\Sp \left(\Sym^\infty( \bIrrp_{n-1} (\Gamma))\right)  		&	\Sp \left(\Sym^\infty( \bIrrp_{n} (\Gamma))\right)  
		}
$$
(i.e. the homotopy fiber of each $q_n$ is $\Sp (\Rep (\Gamma, n-1))$) and $\Rdef (\Gamma)$ is the homotopy colimit of the top horizontal sequence.
\end{theorem}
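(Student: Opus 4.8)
The plan is to build the tower by filtering the representation monoid $\Rep(\Gamma)$ according to the dimensions of irreducible constituents, to identify the successive quotients with infinite symmetric products of the spaces $\bIrrp_n(\Gamma)$, and then to apply the spectrum-valued functor $M\mapsto \Sp(M)$, using a group completion argument to turn the resulting ``short exact sequences'' of topological abelian monoids into homotopy fibration sequences of spectra. This follows the strategy of Lawson in \cite{Lawson-simul}.

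First I would define, for each $n\geq 0$, the topological abelian submonoid $\Rep(\Gamma, n) \subseteq \Rep(\Gamma)$ consisting of those isomorphism classes of unitary representations all of whose irreducible summands have dimension at most $n$; here one invokes the unique decomposition of a unitary representation of a discrete group into irreducibles (Schur's Lemma, as in the discussion following Theorem~\ref{Serre8.1}). Block sum shows that $\Rep(\Gamma, n)$ is a submonoid, and one checks that $\Rep(\Gamma, 0)$ is a point, that $\Rep(\Gamma, n-1)$ is closed in $\Rep(\Gamma, n)$ (the inclusion in fact a cofibration), and that $\Rep(\Gamma) = \colim_n \Rep(\Gamma, n)$. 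Next I would construct the monoid homomorphism $q_n\co \Rep(\Gamma, n) \maps \Sym^\infty(\bIrrp_n(\Gamma))$ that records the unordered collection of dimension-$n$ irreducible constituents of a representation and sends to the basepoint any representation with no such constituent. The first delicate point is the continuity of $q_n$: it relies on the fact that $\bIrr_n(\Gamma)$ is open in the compact set $\bHom_n(\Gamma)$ with closed complement $\bSum_n(\Gamma)$ (Lemma~\ref{loc-cpt}), so that $\bIrrp_n(\Gamma) \homeo \bHom_n(\Gamma)/\bSum_n(\Gamma)$; consequently, when a dimension-$n$ irreducible constituent of a family of representations degenerates into a reducible (hence lower-dimensional) representation, it is carried continuously to the basepoint. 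One then checks that $q_n$ exhibits $\Sym^\infty(\bIrrp_n(\Gamma))$ as the quotient monoid $\Rep(\Gamma, n)/\Rep(\Gamma, n-1)$, with $\Rep(\Gamma, n-1) = q_n^{-1}(\ast)$.

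The crux, and the step I expect to be the main obstacle, is to show that the functor $M\mapsto \Sp(M)$ carries each sequence $\Rep(\Gamma, n-1) \hookrightarrow \Rep(\Gamma, n) \srm{q_n} \Sym^\infty(\bIrrp_n(\Gamma))$ to a homotopy fibration sequence of spectra. The approach is to prove that the classifying space functor $B$ sends such a ``short exact sequence'' of topological abelian monoids to a quasifibration $B\Rep(\Gamma,n-1) \to B\Rep(\Gamma,n) \to B\Sym^\infty(\bIrrp_n(\Gamma))$ --- this is exactly the kind of group completion / $\Gamma$-space input that underlies the Dold--Thom theorem --- so that, after looping once, one obtains a homotopy fibration sequence of the zeroth spaces $\Omega B\Rep(\Gamma, n-1), \Omega B\Rep(\Gamma, n), \Omega B\Sym^\infty(\bIrrp_n(\Gamma))$ of the spectra in the statement; since each $\Sp(M)$ is an $\Omega$-spectrum (as noted in the excerpt, because $B^k M$ is path-connected for $k>0$), this suffices. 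Verifying the quasifibration hypothesis is where the real work lies: one needs the quotient map $q_n$ to be sufficiently well behaved --- admitting local sections, or being proper --- and here the compactness and semi-algebraicity of $\bHom_n(\Gamma)$, $\bSum_n(\Gamma)$ and their symmetric products (Corollary~\ref{bHom-sa}, Lemma~\ref{loc-cpt}) supply the relevant input.

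Finally, to identify $\Rdef(\Gamma)$ with the homotopy colimit of the top row, I would observe that $\Rdef(\Gamma) = \Sp(\Rep(\Gamma))$ by Definition~\ref{Rep}, that $\Rep(\Gamma) = \colim_n \Rep(\Gamma, n)$ is a filtered colimit along closed cofibrations and hence a homotopy colimit, and that $B$, $\Omega$, and therefore $\Sp$ commute with filtered homotopy colimits up to weak equivalence. It follows that $\hocolim_n \Sp(\Rep(\Gamma, n)) \simeq \Sp(\colim_n \Rep(\Gamma, n)) = \Sp(\Rep(\Gamma)) = \Rdef(\Gamma)$, which completes the construction of the tower.
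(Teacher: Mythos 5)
Your outline of the filtration $\Rep(\Gamma,n)$, the maps $p_n$ and $q_n$, the identification of $\Rep(\Gamma,n-1)$ as $p_n^{-1}(\ast)$, and the colimit statement at the end all match the setup of the theorem (which the paper itself does not prove but imports from Lawson~\cite{Lawson-simul}). The gap sits exactly at the step you flag as the crux. It is not true in general that applying $B$ to a sequence of topological abelian monoids $M'\hookrightarrow M\srm{q} M''$ with $M'=q^{-1}(\ast)$ yields a quasifibration, and neither of the sufficient conditions you float --- that $q_n$ admit local sections, or be proper --- rescues the argument. The map $p_n\co \Rep(\Gamma,n)\to\Sym^\infty(\bIrrp_n(\Gamma))$ admits no local sections: the decomposition of a unitary representation into irreducibles is discontinuous, and near a representation one of whose $n$--dimensional irreducible summands degenerates into a reducible one there is no continuous choice of splitting. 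Properness holds but is nowhere near enough to produce a quasifibration; the Dold--Thom-type quasifibration arguments you invoke depend on the source being a genuinely \emph{free} topological abelian monoid $\Sym^\infty(X)$ on a CW pair, whereas $\Rep(\Gamma,n)$ is free only set-theoretically --- its topology, inherited from $\coprod_m\bHom_m(\Gamma)$, does not see the irreducible decomposition, and bridging that mismatch is precisely the content of the theorem rather than something one can cite.

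Lawson's actual route, which the paper records in Section~\ref{examples} around diagram (\ref{htpy-push-out}), sidesteps the quasifibration question. One shows that the square with top row $\Sp(\Sym^\infty(\bSum_n(\Gamma)_+))\to\Sp(\Sym^\infty(\bHom_n(\Gamma)_+))$ and bottom row $\Sp(\Rep(\Gamma,n-1))\to\Sp(\Rep(\Gamma,n))$ is a homotopy pushout of spectra. The top row consists of $\Sp$ applied to honest infinite symmetric products on the compact semi-algebraic pair $(\bHom_n(\Gamma),\bSum_n(\Gamma))$, so the generalized Dold--Thom theorem identifies its homotopy cofiber as $\Sp(\Sym^\infty(\bIrrp_n(\Gamma)))$; the pushout property transfers this identification to the bottom row, and a cofiber sequence of spectra is a fiber sequence, giving the tower. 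The real work is the homotopy pushout claim, which exploits the unique-decomposition structure of $\Rep(\Gamma,n)$ over $\Rep(\Gamma,n-1)$ together with the compactness you mention; your proposal does not supply this step or a substitute for it. (Your continuity argument for $p_n$ is also thinner than it needs to be --- one must check that the whole multiset of $n$--dimensional irreducible summands, viewed in $\Sym^\infty$ of the compactified moduli space, varies continuously, not merely that degenerating irreducibles land at the basepoint --- but that part is repairable.)
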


In this theorem, $\Sym^\infty (X)$ is the infinite symmetric product, which may be viewed as the free abelian topological monoid on $X$, and
 $\Rep(\Gamma, n)$ is the submonoid of $\Rep (\Gamma)$ generated by representation of dimension at most $n$.  Hence $\Rep (\Gamma, n) \subset \Rep (\Gamma)$ consists of representation whose irreducible summands have dimension at most $n$ (which makes sense by Schur's Lemma).  Note that the Generalized Dold--Thom Theorem (Dold--Thom~\cite[Section 6]{Dold-Thom} and Lima-Filho~\cite[Theorem 4.4 and Remark 4.3]{Lima-Filho}) states that $\pi_i \Sp \left(\Sym^\infty (X)\right)\isom \wt{H}_i (X)$, for any based CW complex $X$.   
The map 
$$q_n \co \Sp \left(\Rep (\Gamma, n) \right) \maps \Sp \left( \Sym^\infty (\bIrrp_n \Gamma) \right)$$
is induced by the natural map $p_n\co \Rep (\Gamma, n) \to \Sym^\infty (\bIrrp_n \Gamma)$ sending a representation $\rho$ to the unordered list of its $n$--dimensional irreducible summands.   Theorem \ref{cofiber} reflects the fact that the kernel of $p_n$ is precisely $\Rep(\Gamma, n-1)$.

%\vspace{.15in}

\begin{corollary}$\label{hom-per}$ If $\Gamma$ is an infinite discrete group with $H_* (\bIrrp_n (\Gamma); \bbZ) = 0$ for all $n\geqs 0, *>k$, then $\pi_* \Rdef (\Gamma) = \pi_*  \Sp (\Rep(\Gamma)) = 0$ for $*>k$, and 
the Bott map
$$\beta_*\co \pi_* \K (\Gamma) \maps \pi_{*+2} \K (\Gamma)$$
is an isomorphism for $*> k-2$, and injective for $* = k-2$.
\end{corollary}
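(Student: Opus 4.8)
The plan is to convert the homological vanishing hypothesis into the statement $\pi_*\Rdef(\Gamma)=0$ for $*>k$ by climbing Lawson's tower of fibration sequences (Theorem~\ref{cofiber}) and invoking the Generalized Dold--Thom theorem, and then to extract the statement about $\beta_*$ from the Bott cofiber sequence of Theorem~\ref{Bott} by a homotopy long exact sequence argument. First I would note that, since each $\bIrrp_n(\Gamma)$ is a based CW complex (Theorem~\ref{moduli-k-diml}), the Generalized Dold--Thom theorem gives $\pi_i\,\Sp(\Sym^\infty(\bIrrp_n(\Gamma)))\isom\wt{H}_i(\bIrrp_n(\Gamma))$; since $k\geqs 0$, the range $*>k$ lies in positive degrees, where reduced and unreduced homology coincide, so the hypothesis yields $\pi_*\,\Sp(\Sym^\infty(\bIrrp_n(\Gamma)))=0$ for all $n\geqs 0$ and all $*>k$.

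Next I would show by induction on $n$ that $\pi_*\,\Sp(\Rep(\Gamma,n))=0$ for $*>k$. The base case $n=0$ holds because $\Rep(\Gamma,0)$ is the trivial (one-point) monoid, so $\Sp(\Rep(\Gamma,0))$ is contractible. For the inductive step, the fibration sequence
$$\Sp(\Rep(\Gamma,n-1))\maps\Sp(\Rep(\Gamma,n))\srm{q_n}\Sp(\Sym^\infty(\bIrrp_n(\Gamma)))$$
of Theorem~\ref{cofiber} yields a long exact sequence in homotopy in which, for $*>k$, both outer terms vanish---the left by the inductive hypothesis, the right by the preceding paragraph---hence so does the middle term. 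Since $\Rdef(\Gamma)=\Sp(\Rep(\Gamma))$ is the homotopy colimit of the tower $\{\Sp(\Rep(\Gamma,n))\}_n$ (Theorem~\ref{cofiber}) and homotopy groups commute with sequential homotopy colimits, $\pi_*\Rdef(\Gamma)=\colim_n\pi_*\,\Sp(\Rep(\Gamma,n))=0$ for $*>k$; this is the first assertion of the corollary.

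For the Bott map, I would take the long exact homotopy sequence of the cofiber sequence $\Susp^2\K(\Gamma)\srm{\beta}\K(\Gamma)\maps\Rdef(\Gamma)$ of Theorem~\ref{Bott} and apply the suspension isomorphism $\pi_{*+2}\Susp^2\K(\Gamma)\isom\pi_*\K(\Gamma)$ to write the relevant segment as
$$\pi_{*+3}\Rdef(\Gamma)\maps\pi_*\K(\Gamma)\srm{\beta_*}\pi_{*+2}\K(\Gamma)\maps\pi_{*+2}\Rdef(\Gamma).$$
Exactness then shows that $\beta_*$ is surjective in degree $*$ whenever $\pi_{*+2}\Rdef(\Gamma)=0$, i.e. whenever $*>k-2$, and injective in degree $*$ whenever $\pi_{*+3}\Rdef(\Gamma)=0$, i.e. whenever $*\geqs k-2$. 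Together these give that $\beta_*$ is an isomorphism for $*>k-2$ and is injective for $*=k-2$.

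I do not foresee a genuine obstacle: the corollary is a formal assembly of the cited structural results, and the only care required is bookkeeping---tracking the degree shifts in $\Susp^2\K(\Gamma)$ and in the Bott long exact sequence, invoking the Dold--Thom isomorphism only in the range where reduced and unreduced homology agree, and confirming that the $n=0$ stage of the tower is honestly trivial. The substantive input is the hypothesis $H_*(\bIrrp_n(\Gamma);\bbZ)=0$ for $*>k$, which for the groups considered here is exactly what the $k$--dimensional CW structure of Theorem~\ref{moduli-k-diml} provides via cellular homology.
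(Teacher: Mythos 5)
Your proposal is correct and follows essentially the same route as the paper: convert the homology hypothesis to vanishing of $\pi_*\Sp(\Sym^\infty(\bIrrp_n(\Gamma)))$ via Dold--Thom, climb Lawson's tower (the paper phrases the induction as a chain of isomorphisms starting from the contractible $\Sp(\Rep(\Gamma,0))$, which is the same argument), pass to the colimit, and read off the statement about $\beta_*$ from the long exact sequence of the Bott cofiber sequence with the same degree bookkeeping.
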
  
\begin{proof} The cofiber sequence (\ref{Bott-seq}) induces a long exact sequence in homotopy  
$$\cdots \maps \pi_{*+1} \Rdef (\Gamma)\maps \pi_{*-2} \K (\Gamma) \maps \pi_* \K (\Gamma) \maps \pi_* \Rdef (\Gamma) \maps \cdots,$$
so it suffices to show that $\pi_* \Rdef (\Gamma)  = 0$ for $*>k$.   Theorem \ref{cofiber} yields
\begin{equation}\label{colim}\pi_* \Rdef (\Gamma) = \colim_n \pi_* \Sp (\Rep (\Gamma, n)),\end{equation}
together with long exact sequences
\begin{equation*}
\begin{split} \cdots \maps \pi_{*+1} \Sp \left(\Sym^\infty( \bIrrp_{n} (\Gamma))\right) \maps \pi_* \Sp &(\Rep (\Gamma, n-1)) \maps \pi_* \Sp (\Rep (\Gamma, n))\\
& \maps \pi_*\Sp \left(\Sym^\infty( \bIrrp_{n} (\Gamma))\right) \maps \cdots.
\end{split}
\end{equation*}
As noted above, $\pi_* \Sp \left(\Sym^\infty( \bIrrp_{n} (\Gamma))\right) \isom \wt{H}_* (\bIrrp_{n}(\Gamma); \bbZ)$, so these groups are zero for $*>k$.  Hence for $*>k$ and $n\geqs 0$, we have isomorphisms
\begin{equation} \label{rep-isom} \pi_* \Sp (\Rep (\Gamma, 0)) \srm{\isom} \ldots \srm{\isom} \pi_* \Sp (\Rep (\Gamma, n-1)) \srm{\isom} \pi_* \Sp (\Rep (\Gamma, n)).
\end{equation}
Since $\Rep (\Gamma, 0)$ consists of the $0$--dimensional representation only, $\Sp (\Rep(\Gamma, 0)) = *$, and $\pi_* \Sp (\Rep(\Gamma, 0)) = 0$ for all $*$.  We now conclude from (\ref{rep-isom}) that for $*>k$ and $n\geqs 0$, $\pi_* \Sp (\Rep (\Gamma, n)) = 0$.  Now (\ref{colim}) completes the proof.
\end{proof}

Our periodicity theorem now follows from Theorem~\ref{moduli-k-diml} and Corollary~\ref{hom-per}.

\begin{theorem} $\label{per-thm}$
If $\Gamma$ is virtually $\bbZ^k$, $k\geqs 0$, 
then the Bott map
$$\beta_*\co \pi_* \K (\Gamma) \maps \pi_{*+2} \K (\Gamma)$$
is an isomorphism for $*> k-2$, and injective for $* = k-2$.
\end{theorem}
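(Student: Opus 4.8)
The plan is to derive the theorem with essentially no new work, by combining the dimension bound of Theorem~\ref{moduli-k-diml} with Corollary~\ref{hom-per}; the only content is verifying the hypothesis of that corollary and then reading off the asserted range for $\beta_*$.

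First I would reduce to the setting of Section~\ref{proofs}. If $k \geqs 1$ and $\Gamma$ is virtually $\bbZ^k$, choose a finite-index subgroup $H \isom \bbZ^k$ and pass to its normal core $A = \bigcap_\gamma \gamma H \gamma^{-1}$, which is again free abelian of rank $k$, normal, and of finite index (as noted in Section~\ref{ind-proj}); then $\Gamma$ is an infinite discrete group sitting in an extension $1 \to A \to \Gamma \to Q \to 1$ with $Q = \Gamma/A$ finite, so Theorem~\ref{moduli-k-diml} applies and says $\bIrrp_n(\Gamma)$ is homeomorphic to a CW complex of dimension at most $k$ for every $n > 0$. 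Hence $H_*(\bIrrp_n(\Gamma); \bbZ) = 0$ for all $n > 0$ and all $* > k$. The remaining cases are degenerate: for $n = 0$ the space $\Hom(\Gamma, U(0))$ is a point with no reducible representations, so $\bIrrp_0(\Gamma)$ is a two-point space whose homology is concentrated in degree $0 \leqs k$; and if $k = 0$ then $\Gamma$ is finite, it has only finitely many irreducible $U(n)$-representations up to conjugacy, and $\bIrrp_n(\Gamma)$ is a finite discrete set with a disjoint basepoint, again with homology in degree $0 = k$ only. In all cases, $H_*(\bIrrp_n(\Gamma); \bbZ) = 0$ for $n \geqs 0$ and $* > k$.

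With this hypothesis in hand, Corollary~\ref{hom-per} applies directly: via the Generalized Dold--Thom isomorphism $\pi_* \Sp(\Sym^\infty X) \isom \wt{H}_*(X)$ and the tower of fibration sequences of Theorem~\ref{cofiber}, it gives $\pi_* \Rdef(\Gamma) = 0$ for $* > k$, and then feeds this into the long exact sequence of the Bott cofiber sequence $\Susp^2 \K(\Gamma) \srm{\beta} \K(\Gamma) \to \Rdef(\Gamma)$ of Theorem~\ref{Bott}. That long exact sequence shows $\beta_* \co \pi_*\K(\Gamma) \to \pi_{*+2}\K(\Gamma)$ is surjective as soon as $\pi_{*+2}\Rdef(\Gamma) = 0$ and injective as soon as $\pi_{*+3}\Rdef(\Gamma) = 0$, i.e. surjective for $* > k-2$ and injective for $* \geqs k-2$; this is exactly the claim.

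No genuine obstacle remains: the three substantive inputs --- the CW structure on $\bIrrp_n(\Gamma)$ (Theorem~\ref{moduli-k-diml}), the Bott cofiber sequence (Theorem~\ref{Bott}), and the tower relating $\Rdef(\Gamma)$ to the moduli spaces of irreducibles (Theorem~\ref{cofiber}) --- are all already established. The only steps demanding any care are the two degenerate cases $n = 0$ and $k = 0$, and the short diagram chase that turns the vanishing range $\pi_* \Rdef(\Gamma) = 0$ (for $* > k$) into the stated range for $\beta_*$, including the boundary dimension $* = k-2$, where injectivity but not surjectivity is guaranteed.
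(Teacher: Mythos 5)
Your proposal is correct and is exactly the paper's argument: Theorem~\ref{per-thm} is deduced by feeding the dimension bound of Theorem~\ref{moduli-k-diml} into Corollary~\ref{hom-per} via the Bott cofiber sequence. Your extra care with the degenerate cases $n=0$ and $k=0$ and the explicit check of the boundary dimension $*=k-2$ only makes explicit what the paper leaves implicit.
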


By Lemma \ref{translations}, this result applies to all crystallographic groups.
Recall from the introduction that if $\Gamma$ is crystallographic and torsion-free, then $k$ is the (rational) cohomological dimension of $\Gamma$, while in general $k$ is the virtual rational cohomological dimension.  As explained in the introduction, this bound is closely analogous to the bound appearing in the Quillen--Lichtenbaum Conjectures.
(However, if $\Gamma$ is crystallographic and torsion-free but $\bbR^k/\Gamma$ is \emph{non}-orientable, then $\qcd (\Gamma) < k$.)

%%%%%%%%%%%%%%%%%%%%%%%%%%%%%%%%%%%%%%%%%%%%%%%%%%%
%%%%%%%%%%%%%%%%%%%%%%%%%%%%%%%%%%%%%%%%%%%%%%%%%%%

\section{The stable moduli space}$\label{sm-sec}$

In the author's work on surface groups \cite{Ramras-stable-moduli}, it was shown that if $\Gamma$ is the fundamental group of a product of aspherical surfaces, then there is a weak equivalence 
\begin{equation} \label{we} \Omega B \Rep (\Gamma) \srm{\heq} \bbZ \cross \colim_{n} \bHom_n (\Gamma) \homeo \bbZ \cross \Hom(\Gamma, U)/U.
\end{equation}
between the zeroth space of the spectrum $\Rdef (\Gamma) = \Sp (\Rep (\Gamma))$ and the \emph{stable moduli space} of unitary representations (note that in  \cite{Ramras-stable-moduli}, $\Rep(\Gamma)$ was written $\overline{\Rep (\Gamma)}$).  When $\Gamma$ is the fundamental group of a compact manifold $E$ (for example, if $\Gamma$ is a torsion-free crystallographic group), then this stable moduli space is homeomorphic to the stable moduli space of flat connections on principal $U(n)$--bundles over $E$, up to gauge equivalence (see Ramras~\cite[Sections 3, 6]{Ramras-surface}). 

The weak equivalence (\ref{we}) relies on the following fact: for each representation $\rho\co \Gamma \to U(n)$, there exists a representation $\psi\co \Gamma\to U(m)$ such that $\rho \oplus \psi$ lies in the connected component of the trivial representation.  In this situation, we say that $\Rep(\Gamma)$ is \emph{stably group-like}.

\begin{question} For which crystallographic groups $\Gamma$ is $\Rep (\Gamma)$ stably group-like?
\end{question}

In general, this question has a negative answer: for example, if $\Gamma$ has Kazdhan's property (T), it follows from Wang~\cite{Wang} that $\Rep (\Gamma)$ is not stably group-like.
In this section, we offer one interesting class $\Z$ of torsion-free crystallographic groups for which $\Rep (\Gamma)$ \emph{is} stably group-like.

The class $\Z$ is defined as follows.  Let $\Z_0$ be the class of (non-trivial) finitely generated free abelian groups.  We recursively define $\Z_i$ to be the class of \emph{torsion-free} crystallographic groups $\Gamma$ that sit in an extension
\begin{equation}\label{Z_i} 1 \maps \bbZ^l \maps \Gamma \maps \Gamma_{i-1} \maps 1, \end{equation}
with $\Gamma_{i-1} \in \Z_{i-1}$ and the image of $\bbZ^l$ contained in the translation subgroup of $\Gamma$ (this is well-defined by Lemma \ref{translations}).  We set $\Z = \bigcup_i \Z_i$.  Note that the space forms $\bbR^{\dim (\Gamma)}/\Gamma$ admit an interesting geometric characterization (Proposition~\ref{itb}).

\begin{theorem}$\label{stable-moduli}$
For each $\Gamma\in \Z$, the homotopy groups of $\Hom(\Gamma, U)/U$ vanish above the dimension of $\Gamma$.  Moreover, this stable moduli space is homotopy equivalent to a finite product of Eilenberg--MacLane spaces:
$$\Hom(\Gamma, U)/U \heq \prod_{i=0}^{\dim (\Gamma)} K\left(\pi_i \left(\Hom(\Gamma, U)/U\right), i\right).$$
\end{theorem}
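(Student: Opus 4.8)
The plan is to realise $\Hom(\Gamma,U)/U$, up to homotopy, as the basepoint component of the infinite loop space of the spectrum $\Rdef(\Gamma)$, to bound the homotopy of that spectrum using the dimension estimate for $\bIrrp_n(\Gamma)$, and then to show that $\Rdef(\Gamma)$ is a bounded module spectrum over $H\bbZ$, so that it — and hence its zeroth space — splits as a product of Eilenberg--MacLane pieces. Concretely, since $\Gamma\in\Z$ is a torsion-free crystallographic group, its translation subgroup is free abelian of rank $k:=\dim(\Gamma)$ and $\Gamma$ is virtually $\bbZ^k$ (Lemma \ref{translations}). Theorem \ref{moduli-k-diml} then shows each $\bIrrp_n(\Gamma)$ is a CW complex of dimension $\leqs k$, so $H_*(\bIrrp_n(\Gamma);\bbZ)=0$ for $*>k$, and Corollary \ref{hom-per} gives $\pi_*\Rdef(\Gamma)=0$ for $*>k=\dim(\Gamma)$.

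Next I would show, by induction on the recursive definition of $\Z$, that $\Rep(\Gamma)$ is stably group-like. The base case $\Gamma\in\Z_0$ is immediate, because $\bHom_n(\bbZ^l)\homeo\Sym^n\big((S^1)^l\big)$ is connected, so every representation already lies in the component of the trivial one of the same dimension. For the inductive step, where $\Gamma$ sits in an extension $1\to\bbZ^l\to\Gamma\to\Gamma'\to1$ with $\Gamma'\in\Z_{i-1}$ stably group-like and the image of $\bbZ^l$ inside the translation subgroup of $\Gamma$, I would use the structure theory of Ratcliffe and Tschantz \cite{R-T} for these iterated flat torus bundles to deform an arbitrary representation of $\Gamma$, after stabilising by a suitable complementary representation, to one that is trivial on $\bbZ^l$ — hence pulled back from $\Gamma'$, where stable group-likeness is already known. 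Granting this, the weak equivalence (\ref{we}) applies, identifying $\Omega B\Rep(\Gamma)=\Omega^\infty\Rdef(\Gamma)$ with $\bbZ\cross\Hom(\Gamma,U)/U$. In particular $\Hom(\Gamma,U)/U$ is connected, $\pi_0\Rdef(\Gamma)=\bbZ$, and $\pi_i(\Hom(\Gamma,U)/U)\iso\pi_i\Rdef(\Gamma)$ for $i\geqs1$; combined with the previous paragraph this already proves that $\pi_*(\Hom(\Gamma,U)/U)=0$ for $*>\dim(\Gamma)$.

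For the Eilenberg--MacLane splitting, the key point is that $\Rdef(\Gamma)$ is a module spectrum over $H\bbZ$. By Theorem \ref{Bott}, $\Rdef(\Gamma)$ is the cofiber of the Bott map $\beta\co\Susp^2\K(\Gamma)\to\K(\Gamma)$, which is obtained from the ordinary Bott map $\beta_\ku\co\Susp^2\ku\to\ku$ by smashing over $\ku$ with the $\ku$-module $\K(\Gamma)$ of \cite{Lawson-prod-form}; since smashing with $\K(\Gamma)$ over $\ku$ preserves cofiber sequences and $\ku/\beta_\ku\simeq H\bbZ$ as $E_\infty$-$\ku$-algebras, we get $\Rdef(\Gamma)\simeq\K(\Gamma)\smsh{\ku}H\bbZ$, an $H\bbZ$-module. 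Every $H\bbZ$-module spectrum is formal, i.e. equivalent to the wedge of the suspensions of the Eilenberg--MacLane spectra of its homotopy groups, because $\bbZ$ is hereditary (the homotopy category of $H\bbZ$-modules is the derived category of abelian groups, in which every object is quasi-isomorphic to its homology). As $\pi_*\Rdef(\Gamma)$ is concentrated in degrees $0,\dots,k$, the wedge is finite, hence also a product:
$$\Rdef(\Gamma)\;\simeq\;\bigvee_{i=0}^{k}\Susp^i H\,\pi_i\Rdef(\Gamma)\;\simeq\;\prod_{i=0}^{k}\Susp^i H\,\pi_i\Rdef(\Gamma).$$
Applying $\Omega^\infty$ gives $\bbZ\cross\Hom(\Gamma,U)/U\simeq\Omega B\Rep(\Gamma)\simeq\prod_{i=0}^{k}K\!\big(\pi_i\Rdef(\Gamma),i\big)$; passing to the basepoint component (using $\pi_0\Rdef(\Gamma)=\bbZ$ and that $\Hom(\Gamma,U)/U$ is connected) identifies $\Hom(\Gamma,U)/U$ with $\prod_{i=1}^{k}K\!\big(\pi_i(\Hom(\Gamma,U)/U),i\big)$, which is precisely $\prod_{i=0}^{\dim(\Gamma)}K\!\big(\pi_i(\Hom(\Gamma,U)/U),i\big)$ since $k=\dim(\Gamma)$ and the $i=0$ factor is a point.

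The main obstacle is the stable group-likeness of $\Rep(\Gamma)$ for $\Gamma\in\Z$. The spectral formality argument is essentially automatic once Theorem \ref{Bott} is available, and the vanishing of $\pi_*\Rdef(\Gamma)$ above $\dim(\Gamma)$ follows directly from Theorem \ref{moduli-k-diml} and Corollary \ref{hom-per}; but linking $\Rdef(\Gamma)$ to the genuine stable moduli space requires the equivalence (\ref{we}), whose hypothesis has to be verified by induction using the fibered structure of the manifolds $\bbR^{\dim(\Gamma)}/\Gamma$ and hence the work of \cite{R-T}. Controlling how representations of $\Gamma$ restrict to the torus subgroup $\bbZ^l$, and how they can be deformed so as to become trivial on it after stabilisation, is the delicate point.
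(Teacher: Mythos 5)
Your overall architecture is right, and two of your three ingredients are sound: the vanishing of $\pi_*\Rdef(\Gamma)$ for $*>\dim(\Gamma)$ via Theorem \ref{moduli-k-diml} and Corollary \ref{hom-per} is exactly what the paper does, and your Eilenberg--MacLane splitting via $\Rdef(\Gamma)\simeq \K(\Gamma)\smsh{\ku}H\bbZ$ and formality of $H\bbZ$--modules is a legitimate (and arguably cleaner) alternative to the paper's route, which instead exploits the fact that $\Rdef(\Gamma)=\Sp(\Rep(\Gamma))$ is built from a topological abelian monoid. But the heart of the theorem --- Proposition \ref{Z-sg}, that $\Rep(\Gamma)$ is stably group-like for $\Gamma\in\Z$ --- is exactly the step you leave as an unproven intention, and the mechanism you gesture at (the Ratcliffe--Tschantz fibering structure of the space form) is not where the argument comes from and gives no obvious way to produce the required deformation. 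The actual mechanism is representation-theoretic: by the Projection Formula, $r[\Gamma:A]\,\rho\isom \Ind_A^{\Gamma}\bigl(r\Res_A^{\Gamma}\rho\bigr)$ where $A$ is the translation subgroup; since $A$ is free abelian, $\Hom(A,U(rn))$ is connected, so $r\Res_A^{\Gamma}\rho$ can be deformed to the trivial representation and induction carries this path into $\Hom(\Gamma,U(r n[\Gamma:A]))$, ending at $\Ind_A^{\Gamma}(I_{rn})$. Because $A$ is normal, $\Ind_A^{\Gamma}(I_{rn})$ is trivial on $A\supseteq\bbZ^l$, hence descends to $\Gamma_{i-1}\isom\Gamma/\bbZ^l$, where the inductive hypothesis (applied to a further multiple) finishes the job. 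Without some such device your claim that ``after stabilising'' an arbitrary representation can be deformed to one trivial on $\bbZ^l$ is essentially a restatement of what must be proved.

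There is also a concrete error in your final assembly: you assert $\pi_0\Rdef(\Gamma)=\bbZ$ and that $\Hom(\Gamma,U)/U$ is connected. This is false in general --- the paper's own Proposition \ref{computation} gives $\pi_0\Rdef(\Gamma_k)\isom\bbZ\oplus(\bbZ/2\bbZ)^{2^k-1}$ for the groups $\Gamma_k\in\Z$, so the stable moduli space is disconnected. Stable group-likeness only says each representation becomes connected to \emph{some} trivial one after adding a complement; it does not make $\colim_n\bHom_n(\Gamma)$ connected. This is why the statement of the theorem includes the $i=0$ factor $K(\pi_0,0)$ in the product. The fix is routine (split off the $\bbZ$ coordinate of $\Omega^\infty\Rdef(\Gamma)\simeq\bbZ\cross\Hom(\Gamma,U)/U$ rather than a single path component), but as written your last step does not go through.
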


\begin{remark} $\label{ex-rmk}$ Groups in the class $\Z$ may be constructed as follows.  Take a group $\Gamma_{i-1} \in \Z_{i-1}$ with \emph{abelian} point group $Q$, and consider a semi-direct product $\Gamma = \bbZ^l \rtimes \Gamma_{i-1}$ in which $\Gamma_{i-1}$ acts on $\bbZ^l$ via a representation $\Gamma_{i-1} \to Q\to \GL_l (\bbZ)$ (not necessarily faithful on $Q$).  This gives a (split) extension
$$\bbZ^l \maps \Gamma \maps \Gamma_{i-1}.$$
Let $A\normal \Gamma_{i-1}$ denote the translation subgroup.  Then in the semi-direct product $\Gamma$, we see that $A$ acts trivially on $\bbZ^l$.  Hence $\bbZ^l\cross A < \Gamma$ is free abelian, with quotient $Q$. Since $Q$ acts faithfully on $A$, it acts faithfully on $\bbZ^l\cross A$, and Ratcliffe \cite[Theorem 7.4.5]{Ratcliffe} implies that $\Gamma$ is crystallographic with translation subgroup $\bbZ^l\cross A$. The semi-direct product of two torsion-free groups is always torsion-free, so $\Gamma \in \Z_i$.
\end{remark}

The proof of Theorem \ref{stable-moduli} follows the same path as the argument for surface groups in Ramras~\cite[Section 6]{Ramras-surface}.
In particular, we use the following result~\cite[Proposition 6.2]{Ramras-surface}.

\begin{lemma}$\label{stably-gplike}$ Let $\Gamma$ be a finitely generated discrete group for which $\Rep (\Gamma)$ is stably group-like. 
Then the zeroth space of the spectrum $\Rdef (\Gamma)$ associated to the monoid $\Rep (\Gamma)$ is weakly equivalent to $\bbZ\cross \Hom(\Gamma, U)/U$.
\end{lemma}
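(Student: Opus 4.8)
The plan is to recognize the zeroth space of $\Rdef(\Gamma)=\Sp(\Rep(\Gamma))$ as $\Omega B\Rep(\Gamma)$, to exhibit it as the group completion of the strictly commutative topological monoid $\Rep(\Gamma)=\coprodmo_n\bHom_n(\Gamma)$, and then to identify this group completion with $\bbZ\cross\Hom(\Gamma,U)/U$ using the group completion theorem of McDuff and Segal together with the stably group--like hypothesis.

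First I would set up a telescope model for the target. Let $\theta\in\bHom_1(\Gamma)$ denote the trivial one--dimensional representation and put $\Rep(\Gamma)_\infty=\tele\bigl(\Rep(\Gamma)\srm{\oplus\theta}\Rep(\Gamma)\srm{\oplus\theta}\cdots\bigr)$. Block sum with $\theta$ embeds $\bHom_n(\Gamma)$ into $\bHom_{n+1}(\Gamma)$ (cancellation for unitary representations), raises the dimension grading by one, and is a closed cofibration --- one may triangulate $\bHom_{n+1}(\Gamma)$ with the image as a subcomplex, cf.\ Theorem~\ref{triang-thm}. The grading map $\Rep(\Gamma)\maps\bbN$ therefore induces a map $\Rep(\Gamma)_\infty\maps\tele(\bbN\srm{+1}\bbN\srm{+1}\cdots)\heq\bbZ$, over the component $m\in\bbZ$ of which sits the telescope of $\bHom_{\max(m,0)}(\Gamma)\srm{\oplus\theta}\bHom_{\max(m,0)+1}(\Gamma)\srm{\oplus\theta}\cdots$; since the bonding maps are cofibrations this telescope is homotopy equivalent to $\colim_n\bHom_n(\Gamma)\homeo\Hom(\Gamma,U)/U$. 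Hence $\Rep(\Gamma)_\infty\heq\bbZ\cross\Hom(\Gamma,U)/U$, with the basepoint landing in the component $m=0$.

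Next I would construct the comparison map and show it is a homology isomorphism. The canonical map $\Rep(\Gamma)\maps\Omega B\Rep(\Gamma)$ is a map of $H$--spaces, so it sends $\oplus\theta$ to translation by the image of $\theta$, which is a homotopy equivalence of the group--like $H$--space $\Omega B\Rep(\Gamma)$; passing to the telescope yields a map $\Phi\co\Rep(\Gamma)_\infty\maps\Omega B\Rep(\Gamma)$, which via the previous paragraph is a map $\bbZ\cross\Hom(\Gamma,U)/U\maps\Omega B\Rep(\Gamma)$. By the group completion theorem, $H_*\bigl(\Omega B\Rep(\Gamma)\bigr)$ is the localization $H_*(\Rep(\Gamma))[\pi_0\Rep(\Gamma)^{-1}]$ for any coefficient ring. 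The stably group--like hypothesis says precisely that every class of $\pi_0\Rep(\Gamma)$ divides a power of the class of $\theta$, so inverting $\pi_0\Rep(\Gamma)$ is the same as inverting $\theta$, and the localization is $\colim\bigl(H_*(\Rep(\Gamma))\srm{\cdot\theta}H_*(\Rep(\Gamma))\srm{\cdot\theta}\cdots\bigr)=H_*(\Rep(\Gamma)_\infty)$, the last equality because homology commutes with mapping telescopes. A check that this identification is the one induced by $\Phi$ then shows $\Phi$ is a homology isomorphism.

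Finally I would upgrade this to a weak equivalence. The space $\Omega B\Rep(\Gamma)$ is a loop space, so each of its components is a connected $H$--space, hence simple and therefore nilpotent. The stably group--like hypothesis also makes $\pi_0\Rep(\Gamma)_\infty$ a group, so $\Rep(\Gamma)_\infty$ is a group--like $H$--space and each of its components is again a connected $H$--space, hence nilpotent. Since a homology isomorphism between nilpotent spaces is a weak equivalence, $\Phi$ is a weak equivalence, which is the assertion of the lemma. I expect the main obstacle to be the careful invocation of the group completion theorem --- checking its hypotheses for $\Rep(\Gamma)$ and confirming that stably group--likeness reduces the localization at all of $\pi_0\Rep(\Gamma)$ to the telescope over $\oplus\theta$ --- together with the bookkeeping of the grading shift and the basepoint through the telescope in the first step.
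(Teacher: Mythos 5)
Your argument is correct and is essentially the proof of this lemma given in the cited source \cite[Proposition 6.2]{Ramras-surface} (the present paper only quotes the result): there too one forms the mapping telescope of $\Rep(\Gamma)$ under block sum with the trivial representation, applies the McDuff--Segal group completion theorem \cite{McDuff-Segal}, uses the stably group-like hypothesis to reduce the localization at $\pi_0$ to inverting the class of $\theta$, and upgrades the resulting homology isomorphism to a weak equivalence via simplicity of the components of group-like $H$--spaces. No substantive differences.
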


\begin{proposition} $\label{Z-sg}$ For any group $\Gamma\in \Z$ and any representation $\rho\co \Gamma \to U(n)$, there exists an integer $m>0$ such that $m\rho$ lies in the connected component of the trivial representation in $\Hom(\Gamma, U(nm))$.  In particular,  $\Rep (\Gamma)$ is stably group-like. 
\end{proposition}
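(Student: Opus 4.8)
The plan is to prove, by induction on the least $i$ with $\Gamma\in\Z_i$, that every homomorphism $\rho\co\Gamma\to U(n)$ admits a positive integer $m$ with $m\rho$ in the connected component of the trivial representation $\mathbf{1}_{nm}$ in $\Hom(\Gamma,U(nm))$; the stated consequence (that $\Rep(\Gamma)$ is stably group-like) follows on taking $\psi=(m-1)\rho$. For the base case $\Gamma\in\Z_0$, i.e.\ $\Gamma\isom\bbZ^k$: commuting unitary matrices are simultaneously diagonalizable, so — using path-connectedness of $U(n)$ to move $\rho$ to a diagonal representation $\chi_1\oplus\cdots\oplus\chi_n$, and path-connectedness of $\Hom(\bbZ^k,S^1)\homeo(S^1)^k$ to move each $\chi_j$ to the trivial character — $\rho$ itself lies in the component of $\mathbf{1}_n$, and $m=1$ works.

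For the inductive step, fix the defining extension $1\maps B\maps\Gamma\srm{\pi}\Gamma_{i-1}\maps1$, with $B\isom\bbZ^l$ mapping into the translation subgroup $A\normal\Gamma$ and $\Gamma_{i-1}\in\Z_{i-1}$. I will use repeatedly the \emph{reduction principle}: if $\sigma\co\Gamma\to U(N)$ is trivial on $B$, then $\sigma=\pi^*\sigma'$ for a representation $\sigma'$ of $\Gamma_{i-1}$, and since precomposition with $\pi$ is continuous and commutes with direct sums, the inductive hypothesis applied to $\sigma'$ yields a multiple of $\sigma$ in the trivial component. Since unitary representations decompose uniquely into irreducibles and a block sum of paths is a path, it suffices to treat $\rho$ irreducible (take a common multiple of the multiples for the summands). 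Apply Theorem~\ref{Serre8.1} to the abelian normal finite-index subgroup $A$: either $\rho\isom\Ind_H^\Gamma(\rho')$ for some subgroup $A\leqs H\psubset\Gamma$ and some irreducible $\rho'$, or $\rho(A)\subseteq S^1$.

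In the induced case, $H$ is a torsion-free crystallographic group with translation subgroup $A$, and a short parallel induction shows $\Z$ is closed under finite-index subgroups containing the translation subgroup; so $H\in\Z$, and its point group $H/A$ is a \emph{proper} subgroup of $\Gamma/A$. Running a secondary induction on the order of the point group gives a multiple $m'\rho'$ in the trivial component over $H$; as the induction map \eqref{ind-map} is algebraic (hence continuous) and commutes with direct sums, applying $\Ind_H^\Gamma$ to a connecting path puts $m'\rho$ in the component of a multiple of the permutation module $\bbC[\Gamma/H]$. Since $A$ is normal and contained in $H$, it lies in $\mathrm{core}_\Gamma(H)$, so $\bbC[\Gamma/H]$ is trivial on $A$, hence on $B$, and the reduction principle disposes of it; combining the various multiples finishes this case.

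In the remaining case, $\rho(A)\subseteq S^1$, so $\rho|_A$ is a single character $\lambda\co A\to S^1$, necessarily fixed by the conjugation action of the finite group $Q=\Gamma/A$ on $\Hom(A,S^1)\homeo(S^1)^{\rk(A)}$ (scalars are central). Writing $\Hom(A,S^1)^Q$ as (subtorus)\,$\times$\,(finite group), one checks that every character in the \emph{identity component} of $\Hom(A,S^1)^Q$ extends to $\Gamma$ — the extension obstruction lies in the finite group $H^2(Q;S^1)$, hence is locally constant on a connected torus and so vanishes there — so if $\lambda$ lies in that component, tensoring $\rho$ by a continuously varying family of such extensions deforms $\rho$ to a representation trivial on $A$ (a fortiori on $B$), which the reduction principle handles. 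In general $\lambda$ lies in a possibly nontrivial component of $\Hom(A,S^1)^Q$, and the crux is to deform a suitable multiple $m\rho$ \emph{through induced representations} — in which the $A$-weights split into genuine $Q$-orbits (the phenomenon already visible for the Klein bottle, where a $\bbZ/2$-orbit of weights can be dragged between the two components of the fixed locus) — so as to move $\lambda$ into the identity component, at which stage the previous step applies. I expect this maneuver, together with tracking how large a multiple is needed for the weight-orbits to be available, to be the main obstacle; torsion-freeness of $\Gamma$ is precisely what makes these deformations possible. Everything else is formal, from Serre's dichotomy, the continuity and additivity of induction and inflation, and the base case.
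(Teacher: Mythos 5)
Your argument has a genuine gap, and you have identified it yourself: the case where $\rho$ is irreducible with $\rho(A)\subseteq S^1$ and the character $\lambda=\rho|_A$ does not lie in the identity component of $\Hom(A,S^1)^Q$ is never actually handled. Saying that you ``expect this maneuver \ldots to be the main obstacle'' is an accurate self-assessment, but it means the proof is incomplete precisely where the real content lies. The route through Theorem~\ref{Serre8.1} forces you into the projective-representation case, where there is no evident deformation, and your sketched fix (dragging weight orbits between components of the fixed locus through induced representations) is not substantiated.

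The paper avoids the dichotomy of Theorem~\ref{Serre8.1} entirely, and the missing idea is the Projection Formula (\ref{projection}). One first shows, for each $n$, that some multiple $r\Ind_A^{\Gamma}(I_n)$ is connected to the trivial representation: since $A\normal\Gamma$, the representation $\Ind_A^{\Gamma}(I_n)$ is trivial on $A$, hence on $\bbZ^l$, so it is inflated from $\Gamma_{i-1}$ and the inductive hypothesis (your ``reduction principle'') applies to it. Then for an \emph{arbitrary} $\rho\co\Gamma\to U(n)$ one writes
$$r[\Gamma:A]\,\rho \isom \rho\otimes I_{r[\Gamma:A]} \heq \rho\otimes\bigl(r\Ind_A^{\Gamma}(1)\bigr) \isom \Ind_A^{\Gamma}\bigl(r\Res^{\Gamma}_A(\rho)\bigr),$$
using the Projection Formula in the last step. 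Because $\Hom(A,U(rn))$ is connected, the restriction $r\Res^{\Gamma}_A(\rho)$ can be deformed to $I_{rn}$, and inducing that path lands on $\Ind_A^{\Gamma}(I_{rn})$, a further multiple of which is again handled by the first step. This treats all $\rho$ at once --- no reduction to irreducibles, no case split into induced versus scalar-on-$A$, and no need for your unproved claim that $\Z$ is closed under finite-index subgroups containing the translation subgroup. Your base case and your reduction principle are correct and coincide with the paper's; the tensor-with-$\Ind_A^{\Gamma}(I_r)$ trick is what you are missing.
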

\begin{proof} We work by induction over the classes $\Z_i$.  Each $\Gamma\in\Z_0$ is free abelian of finite rank.  Since commuting unitary matrices are simultaneously diagonalizable, $\Hom(\bbZ^r, U(l))$ is connected (for any $r$ and $l$) so the result is immediate in this case.

Assume the result for groups in $\Z_{i-1}$, and consider some $\Gamma \in \Z_i$, sitting in
an extension 
$\bbZ^l \maps \Gamma \maps \Gamma_{i-1}$
of the form (\ref{Z_i}).  

We will use the notation $\psi \heq \psi'$ to mean that there exists a path connecting the representations $\psi$ and $\psi'$.  
We claim that for each $n$, there exists $m$ such that
\begin{equation}\label{lemma} m\Ind_A^{\Gamma} (I_n) \heq I_{mn[\Gamma : A]}. \end{equation}
Since the translation subgroup $A < \Gamma$ is normal (Lemma \ref{translations}), the representation $\Ind_A^{\Gamma} (I_n) = \bbC[\Gamma] \otimes_{\bbC[A]} \bbC^n$ is trivial on $A$, and also on $\bbZ^l\leqs A$.  
We may now view $\Ind_A^{\Gamma} (I_n)$ as a unitary representation of the quotient group $\Gamma/ \bbZ^l \isom \Gamma_{i-1}$.  
Since $\Gamma_{i-1} \in \Z_{i-1}$, by our induction hypothesis we know that there exists an integer $m>0$ such that $m\Ind_A^{\Gamma} (I_n)$ lies in the connected component of the trivial representation in $\Hom(\Gamma_{i-1}, U(mn [\Gamma : A]))$.  This yields a path in $\Hom(\Gamma, U(mn [\Gamma:A]))$, through representations trivial on $\bbZ^l < A$, from $m\Ind_A^{\Gamma} (I_n)$ to the trivial representation.

Since $U(n)$ is connected, any two isomorphic representations are connected by a path.  By (\ref{lemma}), there exists $r$ such that $r \Ind_A^{\Gamma} (1) \heq I_{r[\Gamma : A]}$ (where $1$ denotes the trivial $1$--dimensional representation).  Now for any $\rho\co \Gamma \to U(n)$, we have
\begin{equation*}
\begin{split}
 r[\Gamma : A] \rho & \isom \rho \otimes I_{r[\Gamma : A]} \heq \rho \otimes \left(r \Ind_A^{\Gamma} (1)\right)\\
& \isom \rho \otimes  \Ind_A^{\Gamma} (I_r) 
		\isom \Ind_A^{\Gamma} \left(\Res^{\Gamma}_A (\rho) \otimes I_r \right) \isom \Ind_A^{\Gamma} \left(r \Res^{\Gamma}_A (\rho)\right),
\end{split}
\end{equation*}
where we have used the Projection Formula (\ref{projection}).

Since $r \Res^{\Gamma}_A (\rho)$ is a representation of the free abelian group $A$, we know that $r \Res^{\Gamma}_A (\rho) \heq I_{rn}$.  
Let $\psi_t \co [0, 1] \to \Hom(A, U(n))$ denote a continuous path with $\psi_0 = r \Res^{\Gamma}_A (\rho)$ and $\psi_1 = I_{rn}$.  Then $\Ind_A^{\Gamma} (\psi_t)$ is a continuous path 
$$\Ind_A^{\Gamma} \left(r \Res^{\Gamma}_A (\rho)\right) \heq \Ind_A^{\Gamma} (I_{rn}).$$  
By (\ref{lemma}), there exists $s$ such that 
\begin{equation*} \label{s} s \Ind_A^{\Gamma} (I_{rn}) \heq I_{s[\Gamma : A]rn}.\end{equation*}

Combining the displayed paths  yields a path from $sr[\Gamma : A] \rho$ to  $I_{s[\Gamma : A]rn}$.
\end{proof}

\noindent {\bf Proof of Theorem~\ref{stable-moduli}.} The result follows from Corollary~\ref{hom-per} and Proposition~\ref{Z-sg}, using the argument in the proof of~\cite[Corollary 6.4]{Ramras-surface} (which was simplified a bit in~\cite[Lemma 5.7]{Ramras-stable-moduli}).  $\hfill \Box$

\vspace{.1in}
We conclude this section with a geometric description of the space forms associated to groups $\Gamma\in \Z$.  Note that Theorem~\ref{stable-moduli} may be interpreted as a statement about the stable moduli space of flat connections on bundles over these manifolds.

Let $\T_0$ denote the family of tori $\bbR^k/L$, where $L \isom \bbZ^k$ is a rank $k$ lattice with $0<k<\infty$.  We recursively define $\T_i$ to be the family consisting of all space forms $E$ which (geometrically) fiber over a space form in $\T_{i-1}$ with flat tori as fibers (geometric fibering is defined in Ratcliffe and Tschantz \cite{R-T}). We set $\T = \bigcup_i \T_i$, and we refer to space forms in $\T$ as \emph{flat iterated torus bundles}.

%\vspace{.15in}  \cite[Theorem 3.3.1]{Wolf}
%We will need the following elementary fact.
%
%\begin{lemma}$\label{free-ab}$ If $\Gamma$ is virtually $\bbZ^k$, then every subgroup $H<\Gamma$ is finitely generated.  If $H< \Gamma$ is abelian, then $\rk (H) \leqs k$.
%\end{lemma}
%
%
%We will call the associated space forms $\bbR^k/\Gamma$ .  Recall from the introduction that when $\Gamma$ is a $k$--dimensional, torsion-free crystallographic group, the quotient map $\bbR^k\to \bbR^k/\Gamma$ is a covering map, so $\bbR^k/\Gamma$ is a compact manifold (called a space form).
%

%
%
%Say $\Gamma = \pi_1 E$ for some $E\in \T_i$.  Then there exists a (geometric) fiber bundle
%$$(S^1)^l \maps E \maps E_{i-1}$$
%with $E_{i-1} \in \T_{i-1}$, and from the long exact sequence in homotopy we see that $\pi_1 (S^1)^l \isom \bbZ^l$ is normal in $\Gamma$, with quotient $\pi_1 (E_{i-1})$.  Ratcliffe and Tschantz show \cite[\S 7, Lemma 5]{R-T} that $\pi_1 (S^1)^l$ lies inside the translation subgroup of $\Gamma$.
%
\begin{proposition} $\label{itb}$ A discrete group $\Gamma$ lies in the class $\Z$ if and only if it is the fundamental group of a flat iterated torus bundle.
\end{proposition}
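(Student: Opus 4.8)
The plan is to run a two-sided induction on the recursive level $i$, matching $\Z_i$ with the family $\T_i$ via the Bieberbach dictionary between torsion-free crystallographic groups and closed flat manifolds. Since $\Z$ and $\T$ are built by the same recipe --- ``adjoin one flat-torus layer'' --- the whole statement reduces to a single layer-matching fact together with an induction. The one geometric input is the characterization of geometric fiberings due to Ratcliffe and Tschantz \cite{R-T}: a closed flat manifold $\bbR^n/\Gamma$ geometrically fibers over a closed flat manifold with flat torus fiber $T^l$ if and only if $\Gamma$ has a suitable (``complete'') normal subgroup isomorphic to $\bbZ^l$, and in that case the base of the fibering is the space form $\bbR^{n-l}/(\Gamma/\bbZ^l)$, with $\Gamma/\bbZ^l$ again torsion-free crystallographic.

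First I would record the background and set up the bridge. By Bieberbach's theorems (Wolf \cite[Chapter 3]{Wolf}), a group is torsion-free crystallographic iff it is the fundamental group of a closed flat manifold $E$, in which case $E$ is affinely diffeomorphic to $\bbR^{\dim(\Gamma)}/\Gamma$, $\dim(E)=\dim(\Gamma)$, and $E$ is a $K(\Gamma,1)$. Next I would prove the bridge lemma: if $\Gamma$ is torsion-free crystallographic with translation subgroup $A$, then a subgroup $\bbZ^l \normal \Gamma$ with $\bbZ^l \leqs A$ and $\Gamma/\bbZ^l$ torsion-free is automatically complete in the sense of \cite{R-T}. Indeed, $A/\bbZ^l$ is a finite-index abelian subgroup of $\Gamma/\bbZ^l$, hence (Lemma~\ref{translations}) a subgroup of the free abelian translation subgroup of $\Gamma/\bbZ^l$, hence torsion-free; so $\bbZ^l$ is a direct summand of $A$, i.e. saturated, so $\bbZ^l$ equals the intersection of $A$ with its own linear span --- which is completeness. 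Conversely a complete normal $\bbZ^l$ lies in $A$ with torsion-free quotient. Using the bridge lemma and the $K(\pi,1)$ property --- so that a geometric fibering over a flat base induces a short exact sequence $1\maps\bbZ^l\maps\pi_1(E)\maps\pi_1(B)\maps 1$, the fiber's $\pi_1$ landing in the translations since it is the $\pi_1$ of a flat totally geodesic torus --- the R-T characterization becomes: for $\Gamma$ torsion-free crystallographic, $\bbR^{\dim(\Gamma)}/\Gamma\in\T_i$ exactly when $\Gamma$ fits in an extension $1\maps\bbZ^l\maps\Gamma\maps\Gamma'\maps 1$ with $\bbZ^l$ inside the translation subgroup and $\bbR^{\dim(\Gamma')}/\Gamma'\in\T_{i-1}$.

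Both implications are then short inductions on $i$. For $(\Rightarrow)$: if $\Gamma\in\Z_0$ then $\Gamma\isom\bbZ^k$ and $\bbR^k/\Gamma$ is a torus in $\T_0$; if $\Gamma\in\Z_i$ with $i\geqs 1$, take the defining extension $1\maps\bbZ^l\maps\Gamma\maps\Gamma_{i-1}\maps 1$ with $\Gamma_{i-1}\in\Z_{i-1}$ torsion-free crystallographic and $\bbZ^l$ in the translations; by induction $\bbR^{\dim(\Gamma_{i-1})}/\Gamma_{i-1}\in\T_{i-1}$, so by the reformulated R-T statement $\bbR^{\dim(\Gamma)}/\Gamma\in\T_i$. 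For $(\Leftarrow)$: if $E\in\T_0$ then $E$ is a torus and $\pi_1(E)\isom\bbZ^k\in\Z_0$; if $E\in\T_i$ with $i\geqs 1$, then $E$ geometrically fibers over some $B\in\T_{i-1}$ with flat torus fiber, $\pi_1(E)$ is torsion-free crystallographic and sits in $1\maps\bbZ^l\maps\pi_1(E)\maps\pi_1(B)\maps 1$ with $\bbZ^l$ in the translations, and $\pi_1(B)\in\Z_{i-1}$ by induction, so $\pi_1(E)\in\Z_i$. Taking unions over $i$ gives the proposition.

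The main obstacle is entirely in the second paragraph: pinning down precisely what \cite{R-T} proves and verifying that, restricted to torus fibers, their notion of geometric fibering is captured exactly by ``extension by a $\bbZ^l$ contained in the translation subgroup.'' Two points need care --- that the $\pi_1$ of a flat totally geodesic torus fiber really consists of translations of the universal cover (rather than screw motions), and the reconciliation of R-T's completeness condition with the apparently weaker condition defining $\Z_i$ --- but both follow once the torsion-freeness of the quotient is used. Everything else is formal bookkeeping about fundamental groups, translation subgroups, and dimensions.
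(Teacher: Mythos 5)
Your proposal is correct and follows essentially the same route as the paper: both directions are inductions on the level $i$, with the substance carried by Ratcliffe--Tschantz's geometric fibering theorem and the rigidity of space forms (Wolf). Your ``bridge lemma'' merely makes explicit the completeness/effectiveness point that the paper handles by directly citing \cite[Lemma 1 and Theorem 13]{R-T}; note in passing that torsion-freeness of $A/\bbZ^l$ follows immediately from torsion-freeness of $\Gamma/\bbZ^l$, so the appeal to Lemma~\ref{translations} there is unnecessary.
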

\begin{proof} Using~\cite[\S 7, Lemma 5]{R-T}, one checks  inductively that if $E\in \T_i$ then $\pi_1 E \in \Z_i$.  We will show that if $\Gamma\in \Z_i$, then $\Gamma \isom \pi_1 E$ for some $E\in \T_i$.  The case $i=0$ is trivial.  Assume the statement for $i-1$ and say $\Gamma\in Z_i$.  We have an extension 
$$\bbZ^l \maps \Gamma \maps \Gamma_{i-1},$$
 as in (\ref{Z_i}).  Say $\Gamma$ acts crystallographically on $\bbR^k$.  Then each $x\in N$ acts via translation by  some $a_x\in \bbR^n$.  Setting $V = \Span (\{a_x \,| \, x\in N\})$, Ratcliffe--Tschantz \cite[Lemma 1 and Theorem 13]{R-T}, shows that
$\Gamma/N\isom \Gamma_{i-1}$ acts effectively on $\bbR^k/V$ as a discrete group of isometries, and   
it follows that $\dim (\Gamma_{i-1}) = \dim \left(\bbR^k/V\right) = k-l$.  

Now Proposition \ref{full-rk} shows that $(\bbR^k/V)/\Gamma_{i-1}$ is a \emph{compact} space form, and by~\cite[Theorem 13]{R-T}, the space form $\bbR^k/\Gamma$ geometrically fibers over 
$(\bbR^k/V)/\Gamma_{i-1}$ with fiber $V/N$, which is a flat torus by Ratcliffe~\cite[Theorem 5.3.2]{Ratcliffe}.  Since $\Gamma_{i-1} \in \Z_{i-1}$, and $(\bbR^n/V)/\Gamma_{i-1}$ is a space form with fundamental group $\Gamma_{i-1}$, we have $(\bbR^n/V)/\Gamma_{i-1} \in \T_{i-1}$ (because space forms are determined by their fundamental groups; see Wolf \cite[Theorem 3.3.1]{Wolf}). 
\end{proof}

%
%Our goal is to study the stable moduli space 
%$$\Mf (E) \homeo \colim_n \bHom_n (\pi_1 E) \homeo \Hom(\pi_1 E, U)/U$$
%for flat iterated torus bundles $E$.

%\begin{theorem}$\label{stable-moduli}$
%For any flat iterated torus bundle $E$, the homotopy groups of $\Mf (E)$ vanish above the dimension of $E$.  Moreover, the stable moduli space is homotopy equivalent to a finite product of Eilenberg--MacLane spaces:
%$$\Mf(E) \heq \prod_{i=0}^{\dim (E)} K\left(\pi_i (\Mf(E)), i\right).$$
%\end{theorem}
%%%%%%%%%%%%%%%%%%%%%%%%%%%%%%%%%%%%%%%%%%%%%%%%%%%
%%%%%%%%%%%%%%%%%%%%%%%%%%%%%%%%%%%%%%%%%%%%%%%%%%%
\section{Examples} $\label{examples}$
 
The computations in this section provide further analogy between the stable representation theory of crystallographic groups and the Quillen--Lichtenbaum Conjecture.  Our examples  fit the setup of Section \ref{sm-sec}, hence yield computations of the homotopy groups of stable moduli spaces of representations (or flat connections).  We will relate these homotopy groups to the cohomology of the underlying group.
 
For  $k\geqs 1$, let $\Gamma_k = \bbZ^{k} \rtimes \bbZ$, with the generator $a\in \bbZ$ acting  on $\bbZ^k$  by inversion.
Then $a^2$ acts trivially, so we have $A = \langle t_1, \ldots, t_k, a^2\rangle \isom \bbZ^{k+1}$, and $[\Gamma_k : A]=2$.   Section \ref{crystal} shows that $\Gamma_k$ is crystallographic of dimension $k+1$, with $A$ as translation subgroup.  Since $\Gamma_k$ is a semi-direct product of torsion-free groups, it is torsion-free.  In fact, $\Gamma_k$ is a flat torus bundle over a torus (see Remark \ref{ex-rmk}) so the results of Section \ref{sm-sec} apply.  Note that $\Gamma_1$ is the fundamental group of the Klein bottle.

The classifying space $B\Gamma_k$ has the form $\bbR^{k+1}/\Gamma_k \isom (S^1)^{k+1}/(\Gamma_k/A)$.  To understand the action of $\Gamma_k/A \isom \bbZ/2\bbZ$ on this torus, note that $\Gamma_k$ acts crystallographically on $\bbR^{k+1}$ via the isometries $v \stackrel{t_i}{\goesto} e_i + v$, $v  \stackrel{a}{\goesto}  \frac{e_{k+1}}{2} + Tv$, where
$$T= \left[ \begin{array}{rrrr} -1 &  &  &   \\ 
							          &  \ddots & & \\
							         & & -1 &\\
							         & & & 1
	\end{array} \right]$$
(with all off-diagonal entries zero),
so the generator of $\Gamma_k/A$ acts on $(S^1)^{k+1}$ via 
$$(z_1, \ldots, z_k, \alpha) \goesto (z_1^{-1}, \ldots, z_k^{-1}, -\alpha)$$
(where both inversion and negation are taken in $\bbC$).  
To compare $\pi_* \Rdef(\Gamma_k) \otimes \bbQ$ and $H^*(\Gamma_k; \bbQ)$ we use the following result, which comes from Grothendieck's T\^ohoku paper \cite[Section 5]{Tohoku} (see also MacDonald \cite[Part II]{MacDonald}).

\begin{theorem}$\label{rational-action}$
Let $G$ be a finite group acting on a finite CW complex $X$.  Then the projection $X\to X/G$ induces an isomorphism
$$H^* (X/G; \bbQ) \srm{\isom} H^* (X; \bbQ)^G.$$
\end{theorem}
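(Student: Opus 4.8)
The plan is to construct an explicit inverse to the pullback $q^*\co H^*(X/G;\bbQ)\to H^*(X;\bbQ)$ out of the averaging operator, which is available exactly because $|G|$ is invertible in $\bbQ$. Since $q\circ g = q$ for every $g\in G$, the image of $q^*$ automatically lies in the $G$-invariants, so the assertion to be proved is that $q^*\co H^*(X/G;\bbQ)\to H^*(X;\bbQ)^G$ is an isomorphism.

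First I would reduce the problem to a statement about cochains. After replacing $X$ by a $G$-homotopy equivalent finite $G$-CW complex — which changes neither side, as a $G$-homotopy equivalence descends to the orbit spaces — we may assume $X$ is a finite $G$-CW complex; by the definition of a $G$-CW structure, the cells of $X$ fall into $G$-orbits and no group element carries a cell to itself by a nontrivial map. Hence $X/G$ carries a CW structure whose cells are the $G$-orbits of cells of $X$, and, choosing orientations compatibly along each orbit, one obtains a natural identification of cellular chain complexes $C_*(X/G;\bbZ)\isom\bigl(C_*(X;\bbZ)\bigr)_G$, and dually $C^*(X/G;\bbQ)\isom\bigl(C^*(X;\bbQ)\bigr)^G$, compatible with differentials and with $q^*$.

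The remaining step is to interchange ``invariants'' with ``cohomology,'' and this is where the coefficient hypothesis enters. For a finite group $G$ the functor $M\mapsto M^G$ on $\bbQ[G]$-modules is exact, because the averaging element $e=\frac{1}{|G|}\sum_{g\in G}g$ is an idempotent endomorphism with image $M^G$, so $M^G$ is a natural direct summand of $M$. Therefore $(-)^G$ commutes with the formation of cohomology of a complex of $\bbQ[G]$-modules, and applied to $C^*(X;\bbQ)$ this yields
$$H^*(X/G;\bbQ)\isom H^*\!\bigl(C^*(X;\bbQ)^G\bigr)\isom H^*\!\bigl(C^*(X;\bbQ)\bigr)^G = H^*(X;\bbQ)^G ,$$
and unwinding the identifications shows the composite is $q^*$.

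The step I expect to be the real work is the chain-level comparison: securing a $G$-CW (or regular simplicial) model and checking that orientation signs do not obstruct $C^*(X/G;\bbQ)\isom C^*(X;\bbQ)^G$. For the groups of interest here, where $X$ is a torus and $G$ acts through affine isometries, an adequate equivariant triangulation is easy to write down, so this difficulty is only technical. One can also avoid triangulations altogether and argue sheaf-theoretically as in Grothendieck \cite{Tohoku}: the quotient map $q$ is finite, hence proper with discrete fibers, so $R^iq_*\bbQ_X=0$ for $i>0$ and $H^*(X;\bbQ)\isom H^*(X/G;q_*\bbQ_X)$; the averaging idempotent exhibits the constant sheaf $\bbQ_{X/G}$ as the $G$-invariant subsheaf $(q_*\bbQ_X)^G$; and exactness of $(-)^G$ once more gives $H^*(X/G;\bbQ)\isom H^*(X/G;q_*\bbQ_X)^G\isom H^*(X;\bbQ)^G$.
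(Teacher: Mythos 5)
Your proposal is correct, and in fact you give two proofs. The paper itself offers no argument at all for this statement --- it simply cites Grothendieck's T\^ohoku paper \cite[Section 5]{Tohoku} --- and your second, sheaf-theoretic argument is essentially the cited one: Grothendieck's spectral sequence $H^p(G;H^q(X;\mF))\Rightarrow H^{p+q}(X/G;\cdot)$ for a properly discontinuous (here finite) action degenerates over $\bbQ$ because higher group cohomology of a finite group with $\bbQ[G]$-module coefficients vanishes, which is exactly your observation that the averaging idempotent makes $(-)^G$ exact. Your first, cellular argument is a genuinely more elementary route, and the exchange of invariants with cohomology via the idempotent $e=\frac{1}{|G|}\sum_g g$ is exactly right. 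Its one soft spot is the opening reduction: an arbitrary action of a finite group on a finite CW complex need not be cellular, and replacing $X$ by a $G$-homotopy equivalent finite $G$-CW complex is not a formal step in this generality (it relies on equivariant ANR/CW theory). You flag this yourself, and it is harmless here both because your sheaf-theoretic version sidesteps it entirely and because the only spaces to which the paper applies the theorem are tori with affine isometric actions of finite groups, which admit equivariant triangulations by inspection. So the proposal is sound; the cellular version buys concreteness and an explicit chain-level inverse to $q^*$, while the sheaf version buys the full generality of the statement as written.
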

 
The action of $\Gamma/A$ on $(S^1)^{k+1}$ has degree -1 in the first $k$ coordinates and degree 1 in the last coordinate, so by Theorem~\ref{rational-action} and the K\"{u}nneth Theorem, the rank of $H^n (\Gamma_k; \bbQ)$ is $\binom{k}{n}$ for $n$ even and $\binom{k}{n-1}$ for $n$ odd.  Hence $\bbR^{k+1}/\Gamma_k$ is orientable when $k$ is even, and non-orientable with $\qcd(\Gamma_k) = k$ when $k$ is odd.

\begin{proposition}$\label{computation}$ For $k\geqs 1$ and $*\geqs 0$, $\pi_* \Rdef (\Gamma_k)$ is finitely generated and 
 $\pi_* \Rdef(\Gamma_k)\otimes \bbQ \isom H^*(\Gamma_k ; \bbQ)$.  Hence  the Bott map $\pi_* \K (\Gamma_k) \xmaps{\beta_*} \pi_{*+2} \K  (\Gamma_k)$ is an isomorphism for $*> \qcd (\Gamma_k) - 2$.
Furthermore, $\pi_0 \Rdef (\Gamma_k)   \isom \bbZ \oplus (\bbZ/2\bbZ)^{2^k -1}$.
 \end{proposition}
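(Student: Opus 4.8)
The plan is to exploit the fact that $\Rdef(\Gamma_k)$ is built from very little data. Since $A=\langle t_1,\dots,t_k,a^2\rangle$ is abelian of index $2$ in $\Gamma_k$, Proposition~\ref{max-dim} forces every irreducible representation of $\Gamma_k$ to have dimension at most $2$; hence $\Rep(\Gamma_k,2)=\Rep(\Gamma_k)$ and $\bIrrp_n(\Gamma_k)=*$ for $n\ge 3$, so Lawson's tower (Theorem~\ref{cofiber}) assembles $\pi_*\Rdef(\Gamma_k)$ from $\wt H_*(\bIrrp_1(\Gamma_k))$ and $\wt H_*(\bIrrp_2(\Gamma_k))$ through only two long exact sequences. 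Finite generation of every $\pi_*\Rdef(\Gamma_k)$, and its vanishing outside $0\le *\le k+1$, is then immediate, since by Theorem~\ref{moduli-k-diml} each $\bIrrp_n(\Gamma_k)$ is a finite CW complex of dimension at most $k+1$.

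First I would pin down the representation theory of $\Gamma_k$. Abelianizing the relation $at_ia^{-1}=t_i^{-1}$ gives $t_i^2=1$, so $\Gamma_k^{\mathrm{ab}}\cong(\bbZ/2)^k\oplus\bbZ$ and $\Hom(\Gamma_k,U(1))\cong(\bbZ/2)^k\times S^1$ has exactly $2^k$ path components, indexed by $G\defn(\bbZ/2)^k$ via $\chi\mapsto(\chi(t_1),\dots,\chi(t_k))\in\{\pm1\}^k$. By Theorem~\ref{Serre8.1} an irreducible representation is either scalar on $A$---hence, since $H^2(\bbZ/2;\bbC^*)=0$, an honest $1$-dimensional representation of $\Gamma_k/A$, so reducible unless $1$-dimensional---or of the form $\Ind_A^{\Gamma_k}(\chi)$ with $\chi\in\Hom(A,S^1)\cong(S^1)^{k+1}$, since $A$ is the only proper subgroup of $\Gamma_k$ containing $A$. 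Frobenius reciprocity (as in Remark~\ref{max-dim'l}) makes $\Ind_A^{\Gamma_k}(\chi)$ irreducible precisely when $\chi\ne\chi^a$, i.e.\ when $\chi(t_i)\notin\{\pm1\}$ for some $i$, and identifies unitarily conjugate such representations with the orbits $\{\chi,\chi^a\}$. Hence $\bIrr_2(\Gamma_k)\cong U/(\bbZ/2)$, where $U\subset(S^1)^{k+1}$ is the complement of the $2^k$ sub-circles $\{\pm1\}^k\times S^1$ and $\bbZ/2$ acts freely by $(z_1,\dots,z_k,w)\mapsto(\bar z_1,\dots,\bar z_k,w)$. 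One checks that $U$, and hence $\bIrr_2(\Gamma_k)$, is connected: directly for $k\ge 2$ (the torus $(S^1)^k$ minus finitely many points is connected), and for $k=1$ because inversion interchanges the two arcs of $S^1\setminus\{\pm1\}$. Thus $\bIrr_2(\Gamma_k)$ is a single connected non-compact manifold, so $\wt H_0(\bIrrp_2(\Gamma_k))=0$, while $\wt H_0(\bIrrp_1(\Gamma_k))=H_0\big((\bbZ/2)^k\times S^1\big)=\bbZ^{2^k}$.

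For $\pi_0$ itself I would use Proposition~\ref{Z-sg} and Lemma~\ref{stably-gplike}, by which $\pi_0\Rdef(\Gamma_k)$ is the group completion of the commutative monoid $\pi_0\Rep(\Gamma_k)$ of path components of representations under block sum. By the classification above this monoid is generated by the classes $x_\epsilon$ ($\epsilon\in G$) of the $1$-dimensional families and the class $y$ of a $2$-dimensional irreducible (well defined because $\bIrr_2(\Gamma_k)$ is connected). The decisive geometric point is that if the inducing character of $\Ind_A^{\Gamma_k}(\chi)$ degenerates onto the circle $\{\epsilon\}\times S^1\subset\partial U$, then $\Ind_A^{\Gamma_k}(\chi)$ limits (by the projection formula $(\ref{projection})$, using $\Ind_A^{\Gamma_k}(\mathbf 1)\cong\mathbf 1\oplus\mathrm{sgn}$) to $\wt\chi\oplus(\wt\chi\otimes\mathrm{sgn})$, a sum of two $1$-dimensional representations \emph{both lying in the family $\epsilon$}, since $\mathrm{sgn}$ is trivial on each $t_i$; this produces a path in $\bHom_2(\Gamma_k)$ from the irreducible stratum to $2x_\epsilon$, giving the relation $y=2x_\epsilon$ for every $\epsilon\in G$. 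The technical heart, which I expect to be the main obstacle, is to show these are \emph{all} the relations: one must verify that the only transitions between combinatorial types of decompositions along a path in $\bHom_N(\Gamma_k)$ are these degenerations (and their reverses) together with motion within a fixed family, so that each parity $m_\epsilon(\rho)\bmod 2$ is locally constant on $\bHom_N(\Gamma_k)$. This can be made precise by tracking how the multiset $\rho|_A\in\Sym^N\big((S^1)^{k+1}\big)$ meets the $a$-fixed circles $\{\pm1\}^k\times S^1$: characters occurring off these circles come in $a$-conjugate pairs and so enter or leave them in even numbers. Granting this, $\pi_0\Rep(\Gamma_k)$ is cancellative with monoid presentation $\langle\,x_\epsilon\ (\epsilon\in G),\ y\ \mid\ y=2x_\epsilon\,\rangle$, so
$$\pi_0\Rdef(\Gamma_k)\;\cong\;\bbZ^G\big/\big\langle\, 2(e_\epsilon-e_{\epsilon'}) : \epsilon,\epsilon'\in G \,\big\rangle\;=\;\bbZ^G\big/\,2\ker\!\big(\bbZ^G\to\bbZ\big)\;\cong\;\bbZ\oplus(\bbZ/2)^{2^k-1},$$
where $\bbZ^G\to\bbZ$ is the augmentation: it is split surjective, so it splits off a free factor of rank $1$ and leaves the rank-$(2^k-1)$ kernel, which survives only modulo $2$. (Equivalently, one identifies $\pi_0\big(\colim_n\bHom_n(\Gamma_k)\big)$ with $(\bbZ/2)^{2^k-1}$ directly, the $\bbZ$-summand being the one in $\bbZ\times\Hom(\Gamma_k,U)/U$.)

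Finally, the assertion $\pi_*\Rdef(\Gamma_k)\otimes\bbQ\cong H^*(\Gamma_k;\bbQ)$ (whence periodicity above $\qcd(\Gamma_k)-2$) follows from the same two long exact sequences, once one computes $\wt H_*(\bIrrp_1(\Gamma_k);\bbQ)$ and $\wt H_*(\bIrrp_2(\Gamma_k);\bbQ)$---using for the latter the identification of $\bIrr_2(\Gamma_k)$ with $Y_k\times S^1$, where $Y_k=\big((S^1)^k\setminus\{\pm1\}^k\big)/(\bbZ/2)$, together with Theorem~\ref{rational-action} applied to $(S^1)^k/(\bbZ/2)$---verifies that the relevant connecting homomorphisms (those landing in $\wt H_*(\bIrrp_1(\Gamma_k);\bbQ)$) are injective, and compares Poincar\'e series with the ranks $\binom kn$ and $\binom k{n-1}$ of $H^*(\Gamma_k;\bbQ)$ recorded above.
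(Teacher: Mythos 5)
Your overall architecture matches the paper's: only two stages of Lawson's tower contribute, $\bIrrp_1(\Gamma_k)$ is $2^k$ circles, the $2$--dimensional irreducibles are parametrized by the free part of the $\bbZ/2$--action on $\Hom(A,S^1)\cong (S^1)^{k+1}$, and the rational comparison with $H^*(\Gamma_k;\bbQ)$ rests on Theorem \ref{rational-action}. Where you genuinely diverge is at $\pi_0$. The paper computes $\pi_0\Rdef(\Gamma_k)$ as $\coker(\partial_1)$ by unwinding the connecting map through the homotopy pushout square (\ref{htpy-push-out}): the homological boundary for $\pIrr_2\to\tIrr_2\to\bIrrp_2$ followed by the inclusion $\pIrr_2\hookrightarrow\bSum_2$ and the map $j$, which is where the factor of $2$ (each boundary circle maps to $2[S^1_{\vec\epsilon}]$ in $\pi_0\Sym^\infty$) comes from. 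You instead compute the Grothendieck group of the monoid $\pi_0\Rep(\Gamma_k)$ directly from a presentation $\langle x_\epsilon, y \mid y=2x_\epsilon\rangle$, with the relation coming from the degeneration $\Ind_A^{\Gamma_k}(\chi)\to\wt\chi\oplus(\wt\chi\otimes\mathrm{sgn})$. This is sound, and your identification of the ``technical heart'' is accurate: one must show no further relations occur, and your proposed invariant --- the parity of the number of points of the $a$--invariant multiset $\rho|_A\in\Sym^N((S^1)^{k+1})$ lying on each fixed circle $\{\vec\epsilon\}\times S^1$, which is locally constant because off-circle points occur in free $a$--orbits --- does complete the argument (one checks it separates exactly the classes your presentation predicts, since the resulting homomorphism to $\bbZ\times(\bbZ/2)^{2^k}$ is injective on $\bbZ\oplus(\bbZ/2)^{2^k-1}$). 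Your route is more representation-theoretic and avoids the spectrum-level boundary map entirely; the paper's route has the advantage of simultaneously producing $\pi_1$ and $\pi_2$ from the same boundary-map analysis.

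One correction to the rational part: the claim that the connecting homomorphisms $\partial_*\otimes\bbQ$ landing in $\wt H_*(\bIrrp_1(\Gamma_k);\bbQ)$ are \emph{injective} is false. From the exact sequence one has $\pi_2\Rdef(\Gamma_k)\otimes\bbQ=\ker(\partial_2\otimes\bbQ)$, and this kernel is $H_2(\tIrr_2(\Gamma_k);\bbQ)\cong H^2(\Gamma_k;\bbQ)$, which is nonzero once $k\geqs 2$; injectivity of $\partial_2$ would force $\pi_2\Rdef(\Gamma_k)\otimes\bbQ=0$, contradicting your own target. The correct statement is that $\partial_*$ is injective only on the ``extra'' classes of $H_*(\bIrrp_2(\Gamma_k);\bbQ)$ coming from collapsing $\pIrr_2(\Gamma_k)$ (equivalently, $\ker(\partial_*)$ is the image of $H_*(\tIrr_2(\Gamma_k);\bbQ)$), and that the image of $\partial_*$ is the span of the differences of the generators of $H_{*-1}(\bIrrp_1(\Gamma_k);\bbQ)$, so that $\coker(\partial_*)\otimes\bbQ\cong\bbQ$ for $*=1,2$. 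This is not a fatal gap --- your Poincar\'e-series check would force you to these statements --- but as written the injectivity claim would derail the computation of $\pi_2$.
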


Theorem~\ref{Bott} gives isomorphisms $\pi_i \Rdef (\Gamma) \isom \pi_i \K (\Gamma)$, $i=0,1$, for every finitely generated group $\Gamma$. 
To calculate $\pi_* \K (\Gamma_k)$ for $*>1$, one can use the Bott spectral sequence (see \ref{Bott-tower}).  This spectral sequence may contain non-trivial differentials for $k>1$.  In light of our conjecture that $\pi_* \K (\Gamma) \isom K^* (\Gamma)$ for torsion-free crystallographic groups $\Gamma$, and the fact that the Atiyah--Hirzebruch spectral sequence in $K$--theory collapses rationally, we expect the Bott spectral sequence for $\Gamma_k$ to collapse rationally.  

In the remainder of this section, we sketch the proof of Proposition~\ref{computation}.  Full details are left to the interested reader.  

By Proposition \ref{max-dim}, $\Gamma_k$ has no irreducible representations of dimension greater than 2.
We have $\Hom(\Gamma_k, U(1)) = \bIrr_1 (\Gamma_k)\homeo \coprod_{2^k} S^1$, and
an elementary computation shows that the closure (in $\bHom_2 (\Gamma_k)$) of the space of 2-dimensional irreducible representations,  denoted   $\tIrr_2 (\Gamma_k)$, is homeomorphic to $\left(T^{k}/C_2\right)\cross S^1$, where $(z_1, \ldots, z_k, \alpha)\in T^{k}\cross S^1$ corresponds to the representation
$\rho(z_1, \ldots, z_k, \alpha)$ given by
\begin{equation*} t_i \goesto \left[ \begin{array}{rr} z_i & 0  \\ 0 & z_i^{-1} \end{array} \right],
\,\,\, a  \goesto T_a = \left[ \begin{array}{rr} 0 & \alpha \\ 1 & 0 \end{array} \right]
\end{equation*}
and $C_2 = \bbZ/2\bbZ$ acts by inversion.  For $n\geqs 0$, Theorem \ref{rational-action} now gives
\begin{equation}\label{Irr-BG} H_n (\tIrr_2 (\Gamma_k); \bbQ) \isom H^n(\tIrr_2 (\Gamma_k); \bbQ) \isom H^n(\Gamma_k; \bbQ)\end{equation}

One finds that $\pIrr_2 (\Gamma_k) := \tIrr_2 (\Gamma_k) \setminus \bIrr_2 (\Gamma_k)$ consists of those $\rho(z_1, \ldots, z_k, \alpha)$ with $z_i \in \{\pm 1\}$, so $\pIrr_2 (\Gamma_k) \homeo \Hom(\Gamma_k, U(1))$.
Modding out this subspace yields $\bIrrp_2 (\Gamma_k)$, and for $*\geqs 3$ the resulting long exact sequence in homology yields
\begin{equation} \label{int1} H_* (\bIrrp_2 (\Gamma_k); \bbZ) \isom H_* (\tIrr_2 (\Gamma_k); \bbZ).
\end{equation}

In this case, Theorem \ref{cofiber}  gives a single homotopy (co)fiber sequence of spectra
\begin{equation}\label{cof-seq} \Sp (\Rep (\Gamma_k, 1)) \maps \Rdef (\Gamma_k) \maps \Sp (\Sym^\infty  \bIrrp_2 (\Gamma_k)).
\end{equation}
Since $\Rep (\Gamma_k, 1) \homeo \Sym^\infty \bIrrp_1 (\Gamma_k)$,
we have $\pi_* \Sp(\Rep(\Gamma_k, 1))\isom H_* \left(\coprod_{2^k} S^1; \bbZ\right)$. 
The resulting long exact sequence in homotopy, along with (\ref{int1}), gives 
\begin{equation}\label{int}\pi_* \Rdef (\Gamma_k) \isom H_* (\bIrrp_2 (\Gamma_k); \bbZ)
\isom H_* (\tIrr_2 (\Gamma_k); \bbZ)
\end{equation}
for $*\geqs 3$.  
Now (\ref{Irr-BG}) yields 
$$\rk\, \pi_* \Rdef (\Gamma_k) = \rk\, H^* (\Gamma_k; \bbZ) \,\, (*\geqs 3).$$
Furthermore, letting $\partial_1$ and $\partial_2$ denote the boundary maps for the long exact sequence in homotopy associated to (\ref{cof-seq}), we see that 
\begin{equation} \label{pi_2} 
\pi_2 \Rdef (\Gamma_k) = \ker \left(\wt{H}_2 (\bIrrp_2 (\Gamma_k); \bbZ) \xmaps{\partial_2} \wt{H}_1 (\bIrrp_1 (\Gamma_k); \bbZ)\right),
\end{equation}
\begin{equation} \label{pi_0} 
\pi_0 \Rdef (\Gamma_k) = \coker \left(\wt{H}_1 (\bIrrp_2 (\Gamma_k); \bbZ) \xmaps{\partial_1} \wt{H}_0 (\bIrrp_1 (\Gamma_k); \bbZ)\right),
\end{equation}
and $\pi_1 \Rdef(\Gamma_k)$ sits in a short exact sequence
\begin{equation} \label{pi_1} 
0\maps \coker  (\partial_2)
 \maps \pi_1 \Rdef (\Gamma_k) \maps \ker (\partial_1) \maps 0.
\end{equation}

To complete the computation, we  study the boundary maps $\partial_1$ and $\partial_0$.
For any finitely generated group $\Gamma$, the boundary maps in the long exact homotopy sequence associated to the homotopy cofiber sequence
 \begin{equation}\label{hocofib-bdry}
 \Sp (\Rep_{n-1} \Gamma) \maps \Sp (\Rep_n \Gamma) \maps \Sp \left(\Sym^\infty \bIrrp_n (\Gamma)\right)
 \end{equation}
 can be realized explicitly as follows.  The inclusions $\bSum_n (\Gamma)_+\injects \Rep(\Gamma, n-1)$ and
 $\bHom_n (\Gamma)_+\injects \Rep(\Gamma, n)$ induce maps out of the associated infinite symmetric products.  Lawson~\cite[Section 2]{Lawson-simul} shows that
\begin{equation} \label{htpy-push-out}
\xymatrix{\Sp \left( \Sym^\infty \left(\bSum_n (\Gamma)_+\right) \right) \ar[r] \ar[d]^j &
						\Sp \left( \Sym^\infty \left(\bHom_n (\Gamma)_+\right) \right) \ar[d]\\
		\Sp \Rep_{n-1} (\Gamma) \ar[r] & \Sp \Rep_{n} (\Gamma)
		}
\end{equation}
is a homotopy pushout square of spectra,
meaning that the induced map between the homotopy cofibers of the rows is a weak equivalence.  The homotopy cofiber of the top row is $\Sp (\Sym^\infty \bIrrp_n (\Gamma))$ by the Generalized Dold--Thom Theorem (specifically, see Dold--Thom~\cite[Satz 5.4]{Dold-Thom} or Lima-Filho~\cite[Theorem 5.2]{Lima-Filho}).  This yields the homotopy cofiber sequence (\ref{hocofib-bdry}), and shows that the boundary map for this sequence is the composite of the homological boundary map for the cofiber sequence 
\begin{equation}\label{hom-cofiber}\bSum_n (\Gamma)_+ \maps \bHom_n (\Gamma)_+ \maps \bIrrp_n (\Gamma)\end{equation}
with the map on homotopy induced by the map $j$ in Diagram (\ref{htpy-push-out}).

Returning to $\Gamma_k$, the homological boundary maps associated to (\ref{hom-cofiber}) (for $n=2$) are the composites of the homological boundary maps for the cofiber sequence
\begin{equation}\label{hom-cofiber2}\pIrr_2 (\Gamma)_+ \stackrel{i}{\injects} \tIrr_2 (\Gamma)_+ \srm{q} \bIrrp_2 (\Gamma)\end{equation}
with the maps on homology induced by $\pIrr_2 (\Gamma)  \stackrel{l}{\injects} \bSum_2 (\Gamma)$.  

We first analyze the homological boundary maps for (\ref{hom-cofiber2}).
The long exact sequence associated to (\ref{hom-cofiber2}) shows that
$$\ker \left(H_2 (\bIrrp_2 (\Gamma_k); \bbZ)
\xmaps{\partial_2^H}  H_1 (\pIrr_2 (\Gamma_k); \bbZ) \right) \isom H_2 (\tIrr_2 (\Gamma_k); \bbZ).$$
Moreover, the image of $\partial_2^H$ is the kernel of 
\begin{equation}\label{i_*}H_1 (\pIrr_2 (\Gamma_k)_+; \bbZ ) \srm{i_*} H_1 (\tIrr_2 (\Gamma_k)_+; \bbZ).\end{equation}
One can see explicitly that under the homeomorphism $\pIrr_2 (\Gamma_k) \homeo \coprod_{2^k} S^1$, all $2^k$ inclusions $S^1 \injects \pIrr_2 (\Gamma_k) \injects \tIrr_2 (\Gamma_k)$ are homotopic to one another (up to orientation), so we conclude that the image of $\partial_2^H$ is generated by the differences between generators of $H_1 (\pIrr_2 (\Gamma_k)_+; \bbZ )$ (assuming the proper choice of orientations).

The long exact sequence associated to (\ref{hom-cofiber2}) also yields
$$\ker \left(H_1 (\bIrrp_2 (\Gamma_k)_+; \bbZ) \xmaps{\partial_1^H} \wt{H}_0 (\pIrr_2 (\Gamma_k)_+; \bbZ) \right)
\isom \Tor \left(H_1 (\bIrrp_2 (\Gamma_k)_+; \bbZ)\right)$$
because rationally, the image of (\ref{i_*}) generates $H_2 (\tIrr_2 (\Gamma)_+; \bbQ)$ (this is seen by dualizing to cohomology and using Theorem~\ref{rational-action}).
A similar analysis shows that $\Img (\partial_1^H)$ is generated by the differences between the standard generators of $H_0 (\pIrr_2 (\Gamma_k); \bbZ)$.

Next, we must consider the 
maps on homology induced by 
$$\pIrr_2 (\Gamma_k)  \stackrel{l}{\injects} \bSum_2 (\Gamma_k)\homeo \Sym^2 (\bIrr_1 (\Gamma_k)).$$  
Given $\vec{\epsilon} = (\epsilon_1, \ldots, \epsilon_k) \in \{\pm 1\}^k$, let $[\vec{\epsilon}]\in H_1 (\pIrr_2 (\Gamma_k); \bbZ)$ denote the class represented by the loop $\alpha \goesto \rho(\vec{\epsilon}, \alpha)$; note that these classes form a basis.  
Let $S^1_{\vec{\epsilon}} \subset \bIrr_1 (\Gamma_k)$ denote the image of the loop
$\alpha\goesto \chi(\vec{\epsilon}, \alpha)$, where $\chi(\vec{\epsilon}, \alpha)$ is the character $t_i \goesto \epsilon_i$, $a\goesto \alpha$.  Then $\bIrr_1 (\Gamma_k) = \coprod_{\vec{\epsilon}\in \{\pm 1\}^k} S^1_{\vec{\epsilon}}$ and 
$$\Sym^2 (\bIrr_1 (\Gamma_k)) \homeo \left(\coprod_{\vec{\epsilon}\in \{\pm 1\}^k} \Sym^2 (S^1_{\vec{\epsilon}}) \right) \coprod \left(\coprod_{\vec{\epsilon}_1 \neq \vec{\epsilon}_2} S^1_{\vec{\epsilon}_1}\cross S^1_{\vec{\epsilon}_2}\right)\textrm{\Huge{/}} (x,y)\sim(y,x).$$
Moreover, $l_* [\vec{\epsilon}]$ is   represented by the loop 
$\alpha \goesto [\chi(\vec{\epsilon},  \sqrt{\alpha}), \chi(\vec{\epsilon}, -\sqrt{\alpha})]\in \Sym^2 (S^1_{\vec{\epsilon}})$,
which generates $\pi_1 \Sym^2 (S^1_{\vec{\epsilon}})$.  Thus $H_i (\pIrr_2 (\Gamma_k); \bbZ)\xmaps{l_*} H_i (\bSum_2 (\Gamma_k); \bbZ)$ is an isomorphism onto   $H_i \left(\coprod_{\vec{\epsilon}} \Sym^2 (S^1_{\vec{\epsilon}}); \bbZ \right)$ for $i=0,1$.  

Finally, we need to compute the maps 
$$\pi_i \Sp \left(\Sym^\infty \left(\bSum_2 (\Gamma)_+\right)\right) \srm{j_*} \pi_i \Sp  \left(\Sym^\infty \left(\bIrrp_1 (\Gamma)\right)\right)$$
for $i=0, 1$.  On $\pi_0$, $j_*$ is the map of free abelian groups induced by $\pi_0 \left(\bSum_2 (\Gamma)_+\right) \to \pi_0 \Sym^\infty (\bIrrp_1 (\Gamma))$; here the basepoint components act as the identity.  In the above notation, this map sends the component $[\Sym^2 S^1_{\vec{\epsilon}}]$ to $2[S^1_{\vec{\epsilon}}]$, where $2[S^1_{\vec{\epsilon}}]$ is interpreted as an element in the free abelian monoid on the components of $\bIrrp_1 (\Gamma)$ (this monoid is naturally isomorphic to $\pi_0 \Sym^\infty (\bIrrp_1 (\Gamma))$).  The other components in the domain of $j_*$ can be ignored, since they are not in the range of the homological boundary map.  On $\pi_1$, we can identify $j_*$ with the induced map 
$$\pi_1  \left(\Sym^\infty \left(\bSum_2 (\Gamma)_+\right)\right) \srm{j_*} \pi_1 \left(\Sym^\infty \left(\bIrrp_1 (\Gamma)\right)\right).$$
Using the fact that $\Sym^\infty (X\vee Y) \isom (\Sym^\infty X)\cross (\Sym^\infty Y)$, we have
$$\Sym^\infty \left(\bSum_2 (\Gamma)_+\right) \isom \prod_{\vec{\epsilon}}   \Sym^\infty (\Sym^2 S^1_{\vec{\epsilon}})_+ \cross   \Sym^\infty \left(\left(\coprod_{\vec{\epsilon}_1\neq \vec{\epsilon}_2} S^1_{\vec{\epsilon}_1} \cross S^1_{\vec{\epsilon}_2} \right)/\sim\right)_+$$
and only $\prod_{\vec{\epsilon}} \pi_1 \Sym^\infty (\Sym^2 S^1_{\vec{\epsilon}})_+$ is in the image of the homological boundary map.  On each factor in this product, $j_*$ is induced by the inclusion of the relevant component of $\Sym^2 (\bIrrp_1 (\Gamma_k))$ into $\Sym^\infty (\bIrrp_1 (\Gamma_k))$.  Hence on the factor $\Sym^\infty (\Sym^2 (S^1_{\vec{\epsilon}})_+)$, $j_*$ is the  the map
$\pi_1 \Sym^\infty (\Sym^2 S^1)_+ \to \pi_1 \Sym^\infty (S^1_+)$.  This map is an isomorphism, as follows from the fact that $S^1\injects \Sym^2 (S^1)$ is a homotopy equivalence, together with the fact that $\pi_1 X \to \pi_1 \Sym^\infty X$ is the Hurewicz map.  We now see that the restriction of $j_*$ to $\prod_{\vec{\epsilon}}  \pi_1 \Sym^\infty (\Sym^2 S^1_{\vec{\epsilon}})_+$ is an isomorphism onto the subgroup $\prod_{\vec{\epsilon}}  \pi_1 \Sym^\infty (S^1_{\vec{\epsilon}})_+$ inside $\pi_1 \Sym^\infty (\bIrrp_1 (\Gamma_k))$.

Putting together our computations, we see that the image of $\partial_1$ is generated by all elements of the 
form $2[S^1_{\vec{\epsilon}_i}] - 2 [S^1_{\vec{\epsilon}_j}]$, $\vec{\epsilon}_i\neq \vec{\epsilon}_j$, so (\ref{pi_0}) yields
$$\pi_0 \Rdef (\Gamma_k) \isom \coker (\partial_1) \isom \bbZ\oplus (\bbZ/2\bbZ)^{2^k - 1}.$$
Next, $\ker (\partial_2) \isom H_2 (\tIrr_2 (\Gamma_k); \bbZ)$, so (\ref{pi_2}) and (\ref{Irr-BG}) yield
$$\pi_2 \Rdef (\Gamma_k) \isom H_2 (\tIrr_2 (\Gamma_k); \bbZ),\,\,\,\,\,\, \pi_2 \Rdef (\Gamma_k)\otimes \bbQ \isom H^2 (\Gamma_k; \bbQ).$$
Finally, our computations show that $\ker (\partial_1) = \Tor (H_1 (\bIrrp_2 (\Gamma_k); \bbZ))$, and also that
$\Img (\partial_2)\subset H_1 (\bIrrp_1 (\Gamma); \bbZ) = H_1 \left(\coprod_{\vec{\epsilon}} S^1_{\vec{\epsilon}}\right)$ is generated by all elements of the form $[S^1_{\vec{\epsilon}_i}] - [S^1_{\vec{\epsilon}_j}]$, $\vec{\epsilon}_i\neq \vec{\epsilon}_j$.  Hence $\coker (\partial_2) = \bbZ$, and  (\ref{pi_1}) yields an exact sequence
$$0\maps  \bbZ \maps \pi_1 \Rdef (\Gamma_k) \maps  \Tor (H_1 (\bIrrp_2 (\Gamma_k); \bbZ)) \maps 0,$$
so $\pi_1 \Rdef (\Gamma_k)\otimes \bbQ \isom \bbQ \isom H^1 (\Gamma_k; \bbQ)$, completing the proof of Proposition~\ref{computation}.

\end{document}